\documentclass[]{amsart}
\usepackage{latexsym,fancyhdr,amssymb,color,amsmath,amsthm,graphicx,listings,comment}
\usepackage[section]{placeins}
\pagestyle{fancy}
\newtheorem{thm}{Theorem}
\newtheorem{lemma}{Lemma}
\newtheorem{propo}{Proposition}
\newtheorem{coro}{Corollary}
\newtheorem{conjecture}{Conjecture}
\setlength{\parindent}{0cm}
\let\paragraph\subsection

\title{Analytic torsion for graphs}
\fancyhead{}
\fancyhead[LO]{\fontsize{9}{9} \selectfont OLIVER KNILL}
\fancyhead[LE]{\fontsize{9}{9} \selectfont TORSION}

\author{Oliver Knill}
\date{January 23, 2022 }
\address{Department of Mathematics \\ Harvard University \\ Cambridge, MA, 02138 }


\begin{document}

\begin{abstract}
For any finite simple graph $(V,E)$, the squared analytic torsion is the
positive rational number $A(G) = \prod_k {\rm Det}(L_k)^{k (-1)^{k+1}}$,
where $L_k$ are the blocks of the Hodge Laplacian $L=D^2=(d+d^*)^2$ of the Whitney
complex and ${\rm Det}$ is the pseudo determinant. Torsion $A(G)$ agrees with the super pseudo determinant 
${\rm SDet}(D) = \prod_k {\rm Det}(D_k)^{(-1)^k}$ of the Dirac blocks
$D_k=d_k^* d_k$ of the Dirac operator $D=d+d^*$ and is related to the pseudo determinant 
${\rm Det}(D)=\pm \prod_k{\rm Det}(D_k)$ of $D$. 
This gives a generalized matrix tree theorem: $A(G)$ is the ratio of rooted spanning trees 
on even-dimensional simplices divided by the number of rooted spanning trees in odd simplices. 
In particular, the classical matrix tree theorem rephrases that for graphs without triangles, 
$A(G)$ is the number of rooted spanning trees in $G$. For $2$-spheres with $|F|$ triangles, 
torsion is $A(G)=|V|/|F|$, rephrasing Von Staudt's theorem that the number of 
spanning trees in a $2$-sphere $G$ 
and its dual graph $G'$ agree. We prove in general $A(G)=|V|$ for graphs 
homotopic to $1$ and $A(G)=|V|/|V'|$ for $(2r)$-spheres and $A(G)=|V| |V'|$ for 
$(2r+1)$-spheres, where $V'$ is the set of maximal simplices in $G$ and $|V|,|V'|$ are
the cardinalities $G$ or $G'$. 
Torsion, as the super pseudo determinant of the Dirac operator 
$D=d+d^*$ can be defined for any bounded differential complex. Similar formulas hold so for Wu torsion 
of spheres in the Wu complex. We also start to look at the expectation of $A$ on Erdoes-Renyi 
probability spaces or look into the problem which graphs on $n$ vertices maximize or minimize $A(G)$. 
The limit $\lim_{n \to \infty} A(G_n)$ for Barycentric refinements of 
even dimensional spheres can be computed.
\end{abstract}

\maketitle

\section{Summary}

\paragraph{}
The {\bf squared analytic torsion} for a graph $G=(V,E)$ is defined as 
the spectral quantity $A(G) = \prod_k {\rm Det}(L_k)^{k (-1)^{k+1}}$,
where the {\bf $k$-form Laplacians} $L_k=d_k^* d_k + d_{k-1} d_{k-1}^*$ is the Laplacian on $k$-forms
and where ${\rm Det}$ denotes the {\bf pseudo determinant}, the product of the non-zero eigenvalues.
The matrices $L_k$ are the {\bf Hodge blocks} in the {\bf Hodge Laplacian} $L=(d+d^*)=D^2$, where $D=d+d^*$ is the 
{\bf Dirac operator} of $G$. The {\bf exterior derivatives} $d_k$ from $k$-forms to $(k+1)$-forms define the 
{\bf Dirac blocks} $D_k=d_k^* d_k$ and $D_k'=d_k d_k^*$ making up $L_k = D_k + D_{k-1}'$. 
Now, $A(G) = {\rm SDet}(D)$, where ${\rm SDet}(D)=\prod_k {\rm Det}(D_k)^{(-1)^{k+1}}$ is the
{\bf super determinant of $D$}. Cauchy-Binet for pseudo determinants
\cite{CauchyBinetKnill} allows now to see ${\rm SDet}(D)$ as a super count of trees because
${\rm Det}(d_k^* d_k) = \sum_{P} {\rm det}(d_k(P))^2$ is a sum of squares of minors 
${\rm det}(d_k(P))^2 \in \{0,1\}$. Both in the contractible as well as the sphere case, we can
identify the even and odd part as complementary trees, where $|V|$ or $|V|'$ reduce from rooted trees
to trees.  This insight allows to count $A(G)$ for contractible graphs by shaving off the first
row and column of $D$, a process which divides the super count by $|V|$. For spheres, where shaving of the 
both first and last column and first and last row of $D$ divides the super count by $|V||V|'$ or $|V|/|V|'$ 
depending on dimension. The resulting quantity $\phi(G)$ is a ratios of trees and complementary trees,
and is $1$ both for contractible graphs or spheres.

\paragraph{}
To formulate the results, a few more definitions are needed. 
The {\bf unit sphere} $S(v)$ of a vertex $v \in V$ in $G$ is the graph induced by the neighbors 
of $v$. The {\bf dual graph} $G'$ of $G$ has the set $V'$ of maximal faces = {\bf facets} 
$x$ as vertices, where two facets $x,y$ are connected if $x \cap y$ has co-dimension $1$. 
A {\bf $k$-sphere} $G$ is inductively defined as a graph for which all unit spheres 
$S(v)$ are $(k-1)$-spheres and such that $G-v$ is contractible for some vertex $v$. 
A graph $G$ is inductively defined to be {\bf contractible} 
if there exists $v \in V$ such that $G-v$ and $S(v)$ are both contractible for some $v$. 
To start the definitions, the empty graph $0$ is the $(-1)$-sphere and
the $1$-point graph $1$ is contractible. A graph is {\bf homotopic to $1$} if a finite number of 
contractions $G \to G-x$ or extensions $G \to G +_H x$ with contractible subgraph $H$, 
lead to $1$. We write $|V|$ and $|V'|$ for the vertex cardinalities of $G$ and $G'$. These are
the first and last entries in the $f$-vector $f=(f_0,f_1,\dots,f_m)$ of $G$, where 
$f_k$ countis the number of complete sub-graphs $K_{k+1}$ in the graph $G$. 
It should be clear that we can replace the clique complex of $G$ with an arbitrary 
simplicial complex. This generalization is almost equivalent as the Barycentric refinement of any 
finite abstract simplicial complex is a Whitney complex. 

\begin{center}\fbox{\parbox{10cm}{
{\bf Theorem:} If $G$ is homotopic to $1$, then $A(G)=|V|$. 
For triangle-free graphs, $A(G)$ is the number of rooted spanning trees in $G$. 
For $(2r+1)$ spheres, $A(G)=|V| |V'|$. For $(2r)$-spheres, $A(G)=|V|/|V'|$. 
For $2$-spheres, $A(G)$ is the ratio of the number of spanning trees in $G$ and the number
of spanning trees in the dual graph $G'$.
}}
\end{center}

\paragraph{}
The use of {\bf pseudo determinant} is crucial because the Hodge blocks $L_k$ as well as the Dirac blocks 
$D_k$ or $D_k'$ building up the Hodge blocks as $L_k=D_k + D_{k-1}'$ are in general singular.
In order to define $A(G)$ and to identify $A(G) = {\rm SDet}(D)$ as a super determinant,
no assumptions whatsoever on $G$ and especially no assumption on the Betti numbers 
$b_k = {\rm dim}({\rm ker}(L_k))$ are necessary. 
By {\bf McKean-Singer symmetry}, also in full generality, the super determinant of the Hodge operator
${\rm SDet}(L) = \prod_k {\rm Det}(L_k)^{(-1)^{k+1}}=1$ for all finite simple graphs. McKean-Singer 
is like Poincar\'e duality an involution symmetry but 
it holds for arbitrary graphs; in comparison, almost all graphs lack Poincar\'e duality.
If $F_k=d_k+d_{k-1}^*$ is the $(n \times f_k)$-matrix
consisting of the {\bf Dirac columns} belonging to $k$-dimensional simplices, then the Hodge
blocks $L_k=F_k^* F_k$ satisfy by {\bf Cauchy-Binet} identity for 
pseudo determinant ${\rm Det}(L_k) ={\rm Det}(F_k^* F_k) 
= \sum_{|P|=f_k-b_k} {\rm det}(F_k(P))^2$ \cite{CauchyBinetKnill}. The classical
Cauchy-Binet theorem which involves classical determinants would not apply as it would
require one of the matrices $F_k^* F_k$ or $F_k F_k^*$ to have full rank. 
We need here the {\bf pseudo determinant Cauchy-Binet}
\cite{CauchyBinetKnill}. We are still stuck at this point because 
the minors ${\rm det}(F_k(P))$ can be pretty arbitrary, so that ${\rm Det}(L_k)$ does 
not count things yet. 

\paragraph{}
Fortunately, the mathematics becomes better with the {\bf Dirac blocks} $D_k = d_k^* d_k$ and 
which are paired with their isospectral block $D_k' = d_k d_k^*$. 
While $D_k$ is a $(f_k \times f_k)$-matrix, the $D_k'$ is a 
$(f_{k+1} \times f_{k+1})$-matrix. We have now the important identity
${\rm Det}(L_k)={\rm Det}(d_k^* d_k) {\rm Det}(d_{k-1} d_{k-1}^*)$ because
$L_k = d_k^* d_k + d_{k-1} d_{k-1}^*$ can be written as $L_k=F_k^* F_k$. The Hodge block $L_k$ is 
essentially isospectral to $L_k'=F_k^* F_k$ (meaning that it has the same non-zero eigenvalues).
The matrix $L_k'$ is a $n \times n$ matrix with two Dirac blocks 
$D_k,D_{k-1}'$ in the diagonal and everything else is zero. 
The {\bf Hodge block determinants} are now a product of {\bf Dirac block determinants}
${\rm Det}(L_k)={\rm Det}(D_k) {\rm Det}(D_{k-1})$. This important identity implies also 
the McKean-Singer symmetry ${\rm SDet}(L)=1$ again and also allows to see 
$A(G)= {\rm SDet}(D)$ is the super determinant of the Dirac operator $D=d+d^*$ of chain complex:

\begin{center}\fbox{\parbox{10cm}{
{\bf Key Lemma:} $A(G)={\rm SDet}(D) = \prod_{k=0}^r {\rm Det}(D_k)^{(-1)^{k+1}}$ 
}} \end{center}

\paragraph{}
The incidence matrices $d_k$ are not square matrices so that their determinant is not
defined a priori. But it is custom to define it as the square root of the determinant of 
$d_k^* d_k$ which is a square root. 
One can still define ${\rm Det}(d_k) = \sqrt{\rm Det}(D_k)$ in order to interpret
$\sqrt{A(G)}={\rm Sdet}(d)$. The square root of $A$ is known as a
{\bf determinant of the chain complex}. Classically, analytic torsion is defined as
$\sqrt{A(G)}$, but as pointed out before, we prefer in combinatorics to have
rational numbers. As a pseudo super determinant of a Dirac matrix it is an object
of classical linear algebra. So, we look at the super determinant of the Dirac operator 
rather than the super determinant of the exterior derivative.
It is no big deal.  Considering the squared analytic torsion
is a bit like looking at variance rather than standard deviation or looking at energy
functionals rather than length functionals. 

\paragraph{}
Again by the {\bf Cauchy-Binet theorem for pseudo determinants}, we have 
${\rm Det}(D_k) ={\rm Det}(d_k^* d_k) = \sum_{|P|=Rank(d_k)} {\rm det}(d_k(P))^2$. But 
now, ${\rm det}(d_k(P))= \pm 1$ allows to see the {\bf squared Dirac minors} ${\rm Det}(D_k)^2$ 
as counting the {\bf number of trees} in the simplex graph with $k$-simplices as vertices and 
$(k+1)$-simplices as edges. Actually $\prod_{k \; {\rm odd}} {\rm Det}(D_k)$ is the number
of rooted spanning trees in the graph $\mathcal{F}$ in which the odd-dimensional simplices are the
vertices and where two are connected if their intersection is a co-dimension $1$ simplex.
Similarly, $\prod_{k \; {\rm even}} {\rm Det}(D_k)$ is the number of rooted spanning trees
in the graph $\mathcal{B}$, in which the even-dimensional simplices are the vertices. Now,
if we include as ``rooted" in the sphere case that both a vertex as well as a facet is fixed.

\begin{center}\fbox{\parbox{10cm}{
{\bf Super matrix tree theorem}: For all graphs, 
$A(G)$ is the number of rooted spanning trees in $\mathcal{B}$ divided by the number of rooted 
spanning trees in $\mathcal{F}$. 
}} \end{center}

\paragraph{}
For {\bf triangle-free graphs}, torsion $A(G)={\rm Det}(L_0)$ is the number of {\bf rooted
spanning trees} in the graph $G$ itself. The integer ${\rm Det}(L_0)/|V|$ is
the number of {\bf (non-rooted) trees} by the {\bf matrix tree theorem} and each tree belongs to $|V|$
{\bf rooted trees}. A simple example is $A(C_n)=n^2$. An other example is provided by
a bipartite graph $K_{n,m}$ which is triangle-free and for which we have
$A(K_{n,m}) = n^{m-1} m^{n-1} (n+m)$. For the {\bf utility graph} $K_{3,3}$ for example, 
$A(K_{3,3})=3^4*6=486$. This is an interesting case because it is
the {\bf maximum} of the torsion functional $A$ on all graphs with $|V|=6$ vertices. 
We also know that for complete graphs, $A(K_n)=|V(K_n)|=n$. The octahedron graph
$A(G)=3/4$ has {\bf minimal torsion} among all graphs with $|V|=6$ vertices. 
While minima of torsion seem in general to gravitate to graphs with the homotopy type of bouquets 
of spheres, the bipartite graphs $K_{n,m}$ are good candidates for maxima on graphs with $|n+m|$. 
In any case, we have graphs $G=K_{n,n}$ of order $2n$ with $A(G) =2 n^{2n-1}$ showing that 
torsion can grow {\bf super exponentially} with the order $n$. Bouquets of 2-spheres are examples
where $A(G_n)$ decays exponentially with the order $n$.

\paragraph{}
For {\bf $2$-spheres}, graphs for which every unit sphere is a circular graph 
with $4$ or more vertices, we can quickly prove $A(G)=|V|/|V'| = f_0(G)/f_2(G)$. 
It rephrases the fact that the number of spanning trees in a $2$-sphere $G$
is the same than the number of spanning trees in the dual graph $G'$. 
We know that for all graphs homotopic to $1$, we have 
$A(G)=|V|$ and that if $|V'|$ is the number of maximal simplices in $G$,
then $A(G)=(|V| |V'|)$ for odd-dimensional spheres and
$A(G)=|V|/|V'|$ for even-dimensional spheres. 
More generally, we can reformulate our main theorem as a statement which is easier to prove
and which generalizes the Von Staudt theorem for 2-dimensional spheres. 

\begin{center}\fbox{\parbox{10cm}{
{\bf Duality theorem}: For contractible graphs and for spheres, the
number of spanning trees in $\mathcal{B}$ is equal to the number of spanning 
trees in $\mathcal{F}$. 
}} \end{center}

\begin{figure}[!htpb]
\scalebox{0.6}{\includegraphics{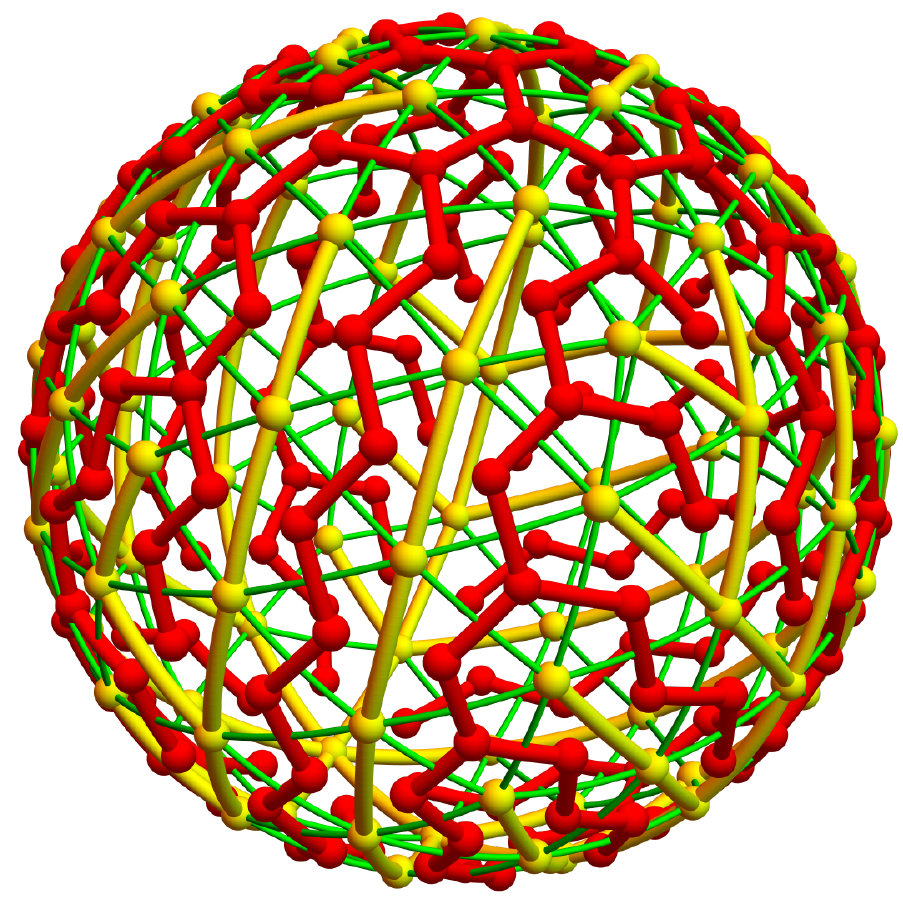}}
\label{Figure 0}
\caption{
A 2-sphere $G$ with a spanning tree $T$ defines
a dual spanning tree $T'$. 
}
\end{figure}

\paragraph{}
The duality result is false for  general graphs. 
For the house graph $G$ for example, we have $3$ spanning trees in $G'$ and 
$5$ spanning trees in $G$. The relation to the super matrix tree statement is that in
the contractible case, ``rooted" means
fixing a  root in the vertex set. In the sphere case, fixing a root in the vertex set
and fixing a root in the dual vertex set has different effects depending on dimension.
In the odd-dimensional case, both roots change the spanning trees in $\mathbb{B}$ and
do  not affect the spanning trees in $\mathbb{F}$. In the even dimensional case, 
fixing the root in $V=V(G)$ affects the spanning trees in $\mathbb{B}$ while fixing
the root in $V'=V(G')$ affects the spanning trees in $\mathbb{F}$. In the two-dimensional
case, the spanning trees in $\mathbb{B}$ are the spanning trees in $G$ while the 
spanning trees in $\mathbb{F}$ are the spanning trees in $G'$. 
What happens in a case like a 2-torus surface is that we would have to fix more roots
and that the number of possibilities to snap edges to break render the graph contractible
matters. 

\paragraph{}
The torsion functional $A(G)$
on $2$-dimensional surfaces different from spheres becomes complicated in general
and depends on the structure of the graph not only on the topology.  
For flat discrete $2$-tori, we measure $A(G)=|V|/(6 |V'|)$ but this
changes under deformations and Barycentric refinements already.
The functional $A(G)$ is interesting for discrete 2-manifolds already graphs for 
which all unit spheres are circular graphs with 4 or more elements. For a $2r$ sphere $G=G_0$
we can compute the limit $A(G_n)$ of Barycentric refinements: if $A$ is the linear Barycentric
refinement operator satisfying $A f(G_n)=f(G_{n+1})$, then $A$ is upper triangular with 
eigenvalues $\{ k!\, k=1,...,2r \}$. If $g=(g_0,...,g_{2r})$ is the 
eigenvector of the largest eigenvalue $r!$, then $\lim_{n \to \infty} A(G_n) \to g_{2r}/g_0$.
For odd dimensional spheres $G=G_0$, we  have $\lim_{n \to \infty} A(G_n) \to \infty$. 
Historically, torsion was never intended for even-dimensional manifolds as it is zero. 
In our case, $A(G_n)$ converges even in the Barycentric limit for even dimensional spheres
and $\sqrt{A(G_n)}/{\rm Vol}(G_n)$ in the Barycentric limit. 

\paragraph{}
Torsion $A(G)$ is neither a {\bf homotopy invariant} nor a {\bf valuation}. It does not
satisfy any sort of {\bf Meyer-Vietoris} relation in the combinatorial version we look at. 
The proof of the theorem shows this. For illustration, start with the {\bf icosahedron} 
$G$ which is a $2$-sphere with $f$-vector $f_G=(12,30,20)$ and Betti vector $b_G=(1,0,1)$, the torsion 
is $A(G)=12/20=3/5$. For the {\bf icosahedron with hair} $G_0=G \cup_H v$, a cone extension over the 
graph $H$ generated by a single vertex $\{v_1\}$gives $f_{G_0}=(13,31,20)$ and $A(G_0)=13/20$. The super 
count still works. For an {\bf icosahedron with nose} $G_1 = G \cup_H v$ which is a cone extension 
$H=\{v_1,v_2, (v_1,v_2) \in E\}$ over an edge have $f_{G_1}= (13,32,21)$ and still $A(G_1)=13/20$, 
even-so there are $21$ faces.  

\paragraph{}
A drastic change happens for the {\bf icosahedron with hat} after a cone extension 
$G_2 = G \cup_H v$ over a face sub-graph $H=\{v_1,v_2,v_3\}$ (this can be seen as a refinement of 
a triangle but it increases the dimension of the complex). Now, $f_{G_2}=(13,33,23,1)$ and
$b_{G_2})=(1,0,1,0)$ as a third block $L_3$ has appeared, even so $L_3$ is invertible.
Torsion $A(G_2) = 52/79$ has lost its relation with $f_{G_2}$ or $f_G$ as some
Dirac block $D_k$ in the interior would need cutting, messing up the super count.
Non-trivial cohomology or even non-trivial homology groups like for the projective plane do the same. 
Let us look at $G_3$ which is an {\bf icosahedron with ear}, a 2-sphere with the addition 
of a $1$-dimensional handle. Now, $f_{G_3}=(14,33,20)$ and $b_{G_3}=(1,1,1)$, 
$\chi(G)=14-33+20=1-1+1=1$ and $A(G_3) = 707/300$. 
The super-count of trees is again messed up. We can however glue an arbitrary number of
trees on $G$ to get a graph $G_4$ and still have $A(G)=|V|/|F|$. 

\begin{figure}[!htpb]
\scalebox{0.4}{\includegraphics{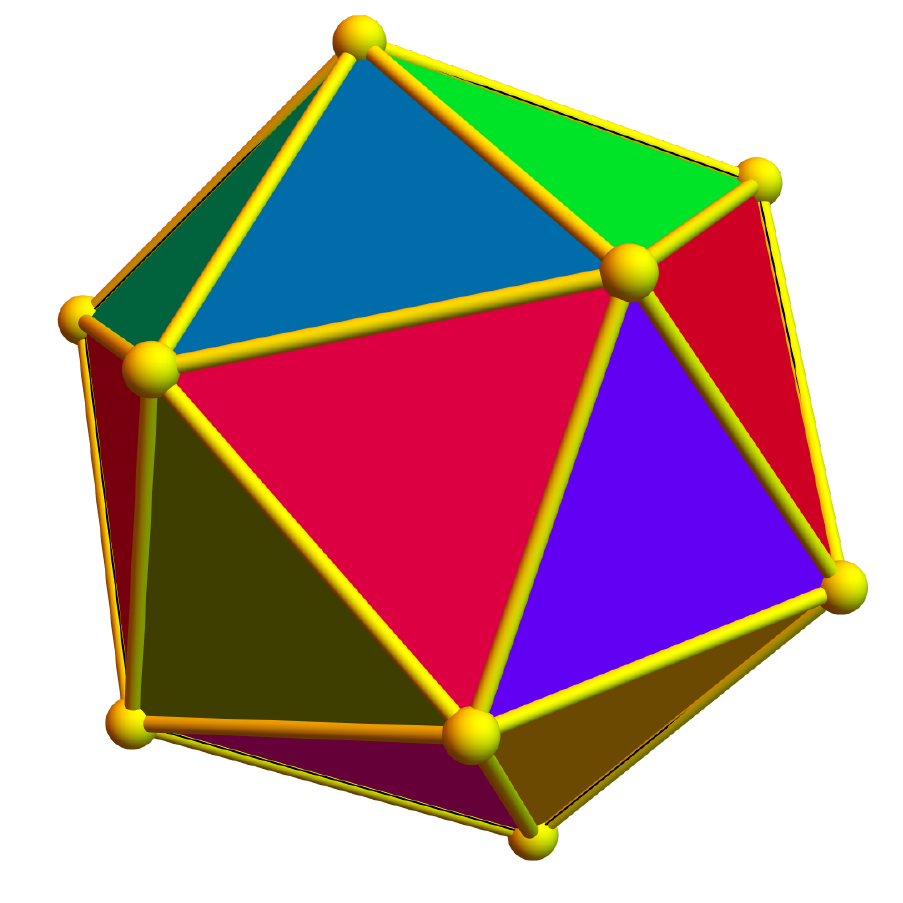}}
\scalebox{0.4}{\includegraphics{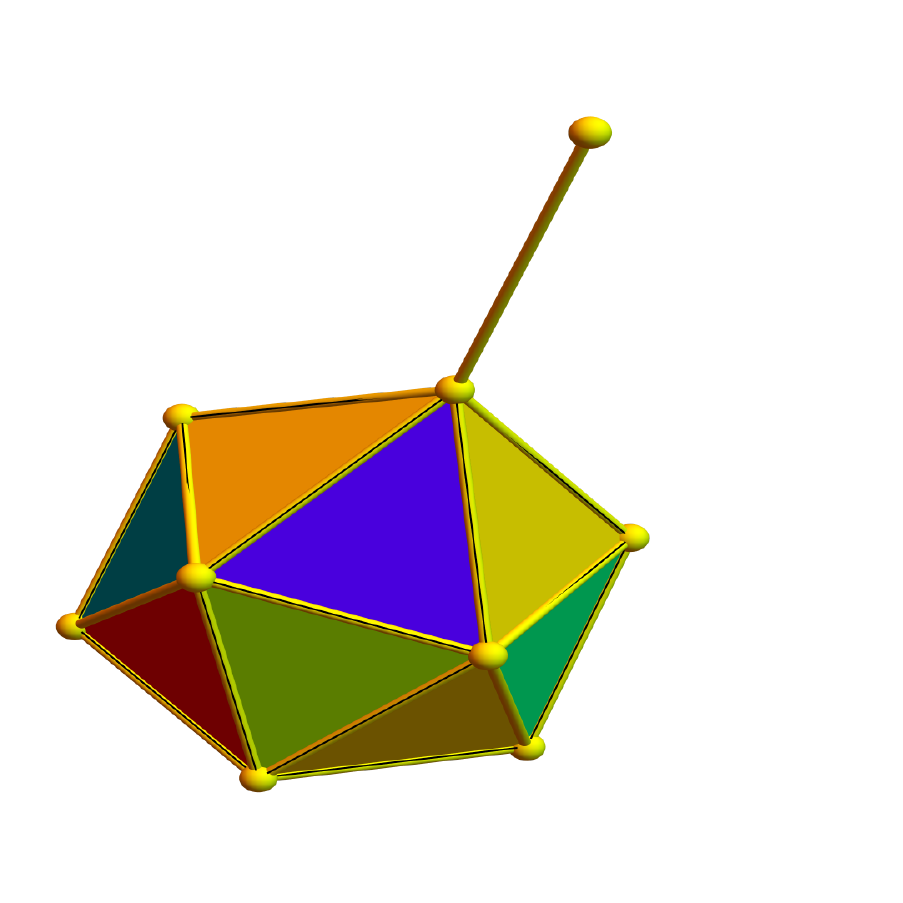}}
\scalebox{0.4}{\includegraphics{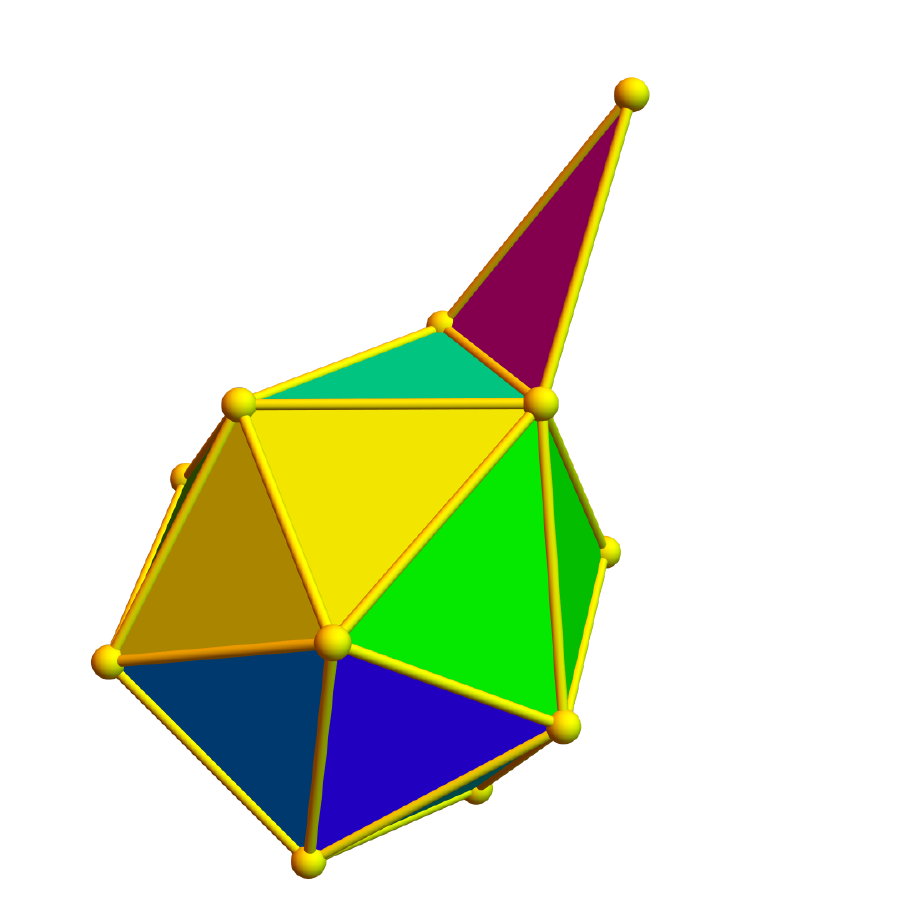}}
\scalebox{0.4}{\includegraphics{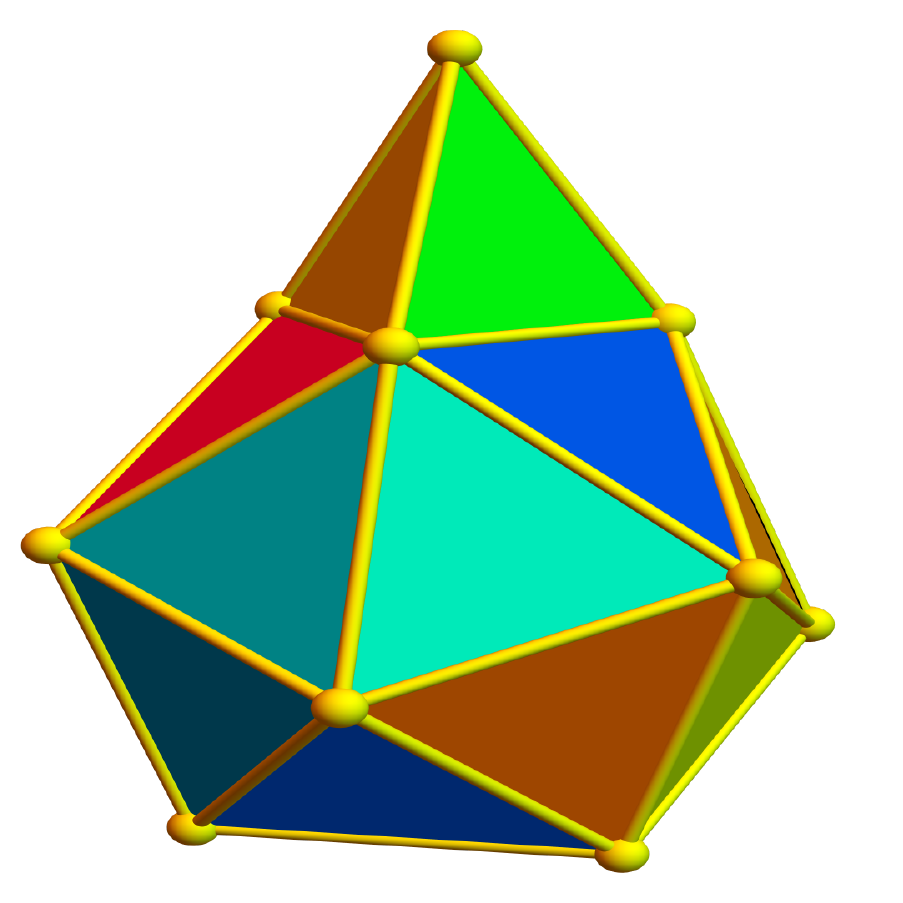}}
\scalebox{0.4}{\includegraphics{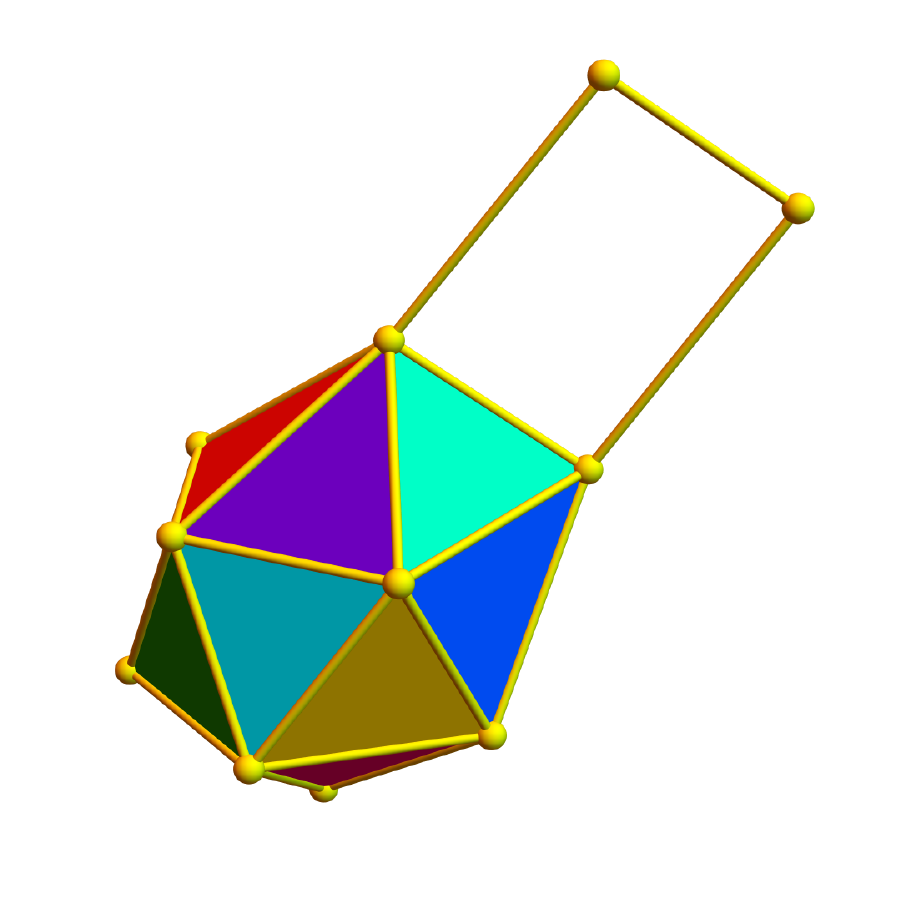}}
\scalebox{0.4}{\includegraphics{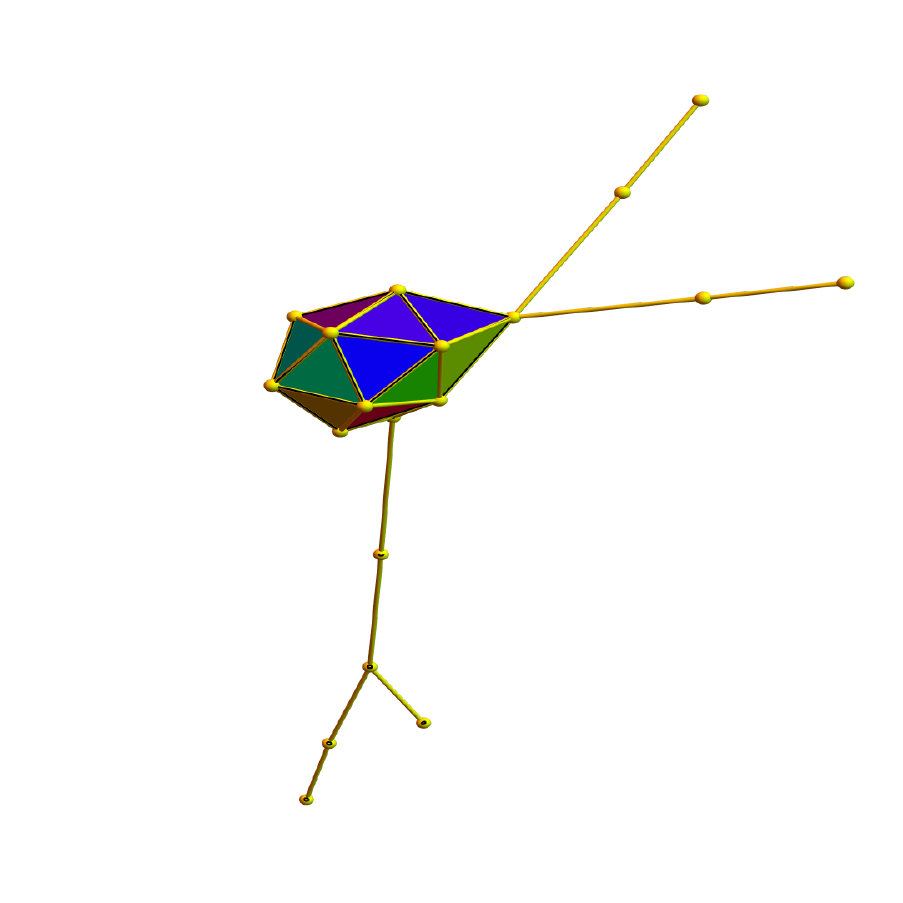}}
\label{Figure 0}
\caption{
We see modifications of an icosahedron $G$ with $f=(12,30,20)$ and $A(G)=|V|/|F|=12/20$. 
Adding a single hair $G_0$, multiple hairs $G_4$ or a nose $G_1$ gives 
$A(G_0)=A(G_1)=A(G_4) = |V(G_0)|/|F|=13/20$.
Torsion becomes complicated when adding a ``hat" $\hat{G_2}$ or an ``ear" $\hat{G_3}$: 
a modification involving faces of $G$ or changing the cohomology messes up 
torsion. Here, $A(G_2)= 52/79$ or $A(G_3) = 707/300$. 
}
\end{figure}

\begin{figure}[!htpb]
\scalebox{0.6}{\includegraphics{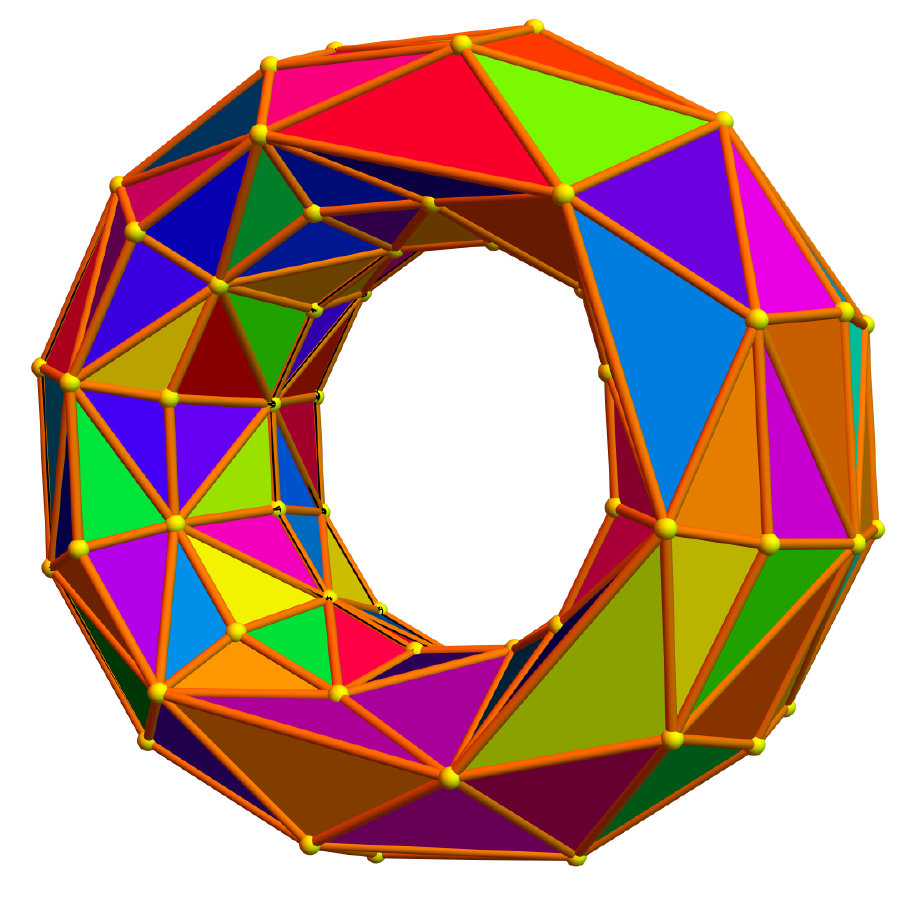}}
\label{Figure 0}
\caption{
Already for a 2-torus $G$, the functional $A(G)$ can take different values. 
For a flat Clifford torus, where each vertex degree is $6$, we have 
$A(G)=|V|/(3|F|)$. There is one with $f_G=(16,48,32)$, where the Baryentric refinement has
$f_{G_1}=(96,288,192)$ and $A(G)=3/32$. 
For the torus shown here and where the vertex degrees are either $4$ or $6$, we 
have the $f$-vector $f=(80,240,160)$ and Betti vector $b=(1,2,1)$ and 
analytic torsion $A(G)=18278388315/141574481716$.  
}
\end{figure}

\begin{figure}[!htpb]
\scalebox{0.6}{\includegraphics{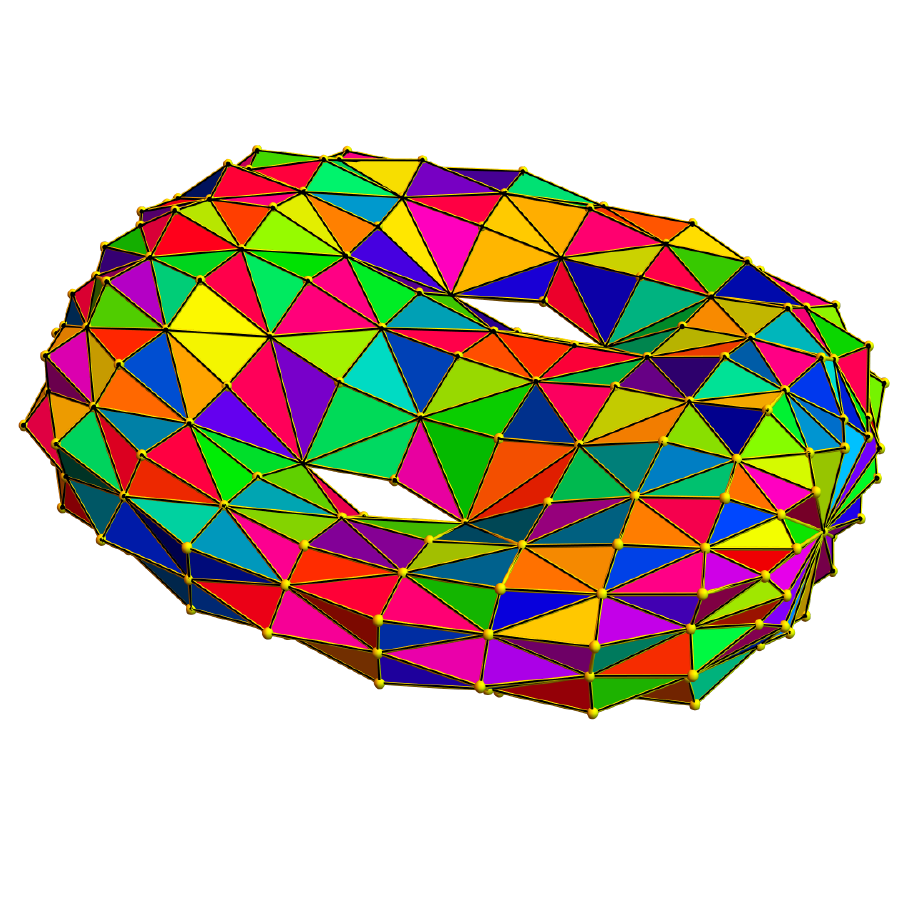}}
\label{Figure 0}
\caption{
A 2-manifold of genus $2$, realized as a graph $G$ with $f$-vector $f=(|V|,|E|,|F|)=(262,792,528)$
Betti vector $b=(1,4,1)$ and Euler characteristic $\chi(G) =|V|-|E|+|F|=1-4+1=-2$. A graph is a 2-manifold
as very unit sphere is a circular graph of length $4,6,8,16$ or $24$
leading to curvatures $\left\{-3,-\frac{5}{3},-\frac{1}{3},0,\frac{1}{3}\right\}$.
The Dirac operator $D$ is a $1582 \times 1582$ matrix. The analytic torsion is
\begin{tiny}
$\frac{70039080674189248816744297898336922150488256905}{4925851994736661747496162159567655905824021687407} \sim 0.0142187$.
\end{tiny}
The Dirac determinant is a huge $362$ digit integer $(1.30434...)*10^{362}$.
}
\end{figure}

\begin{figure}[!htpb]
\scalebox{0.6}{\includegraphics{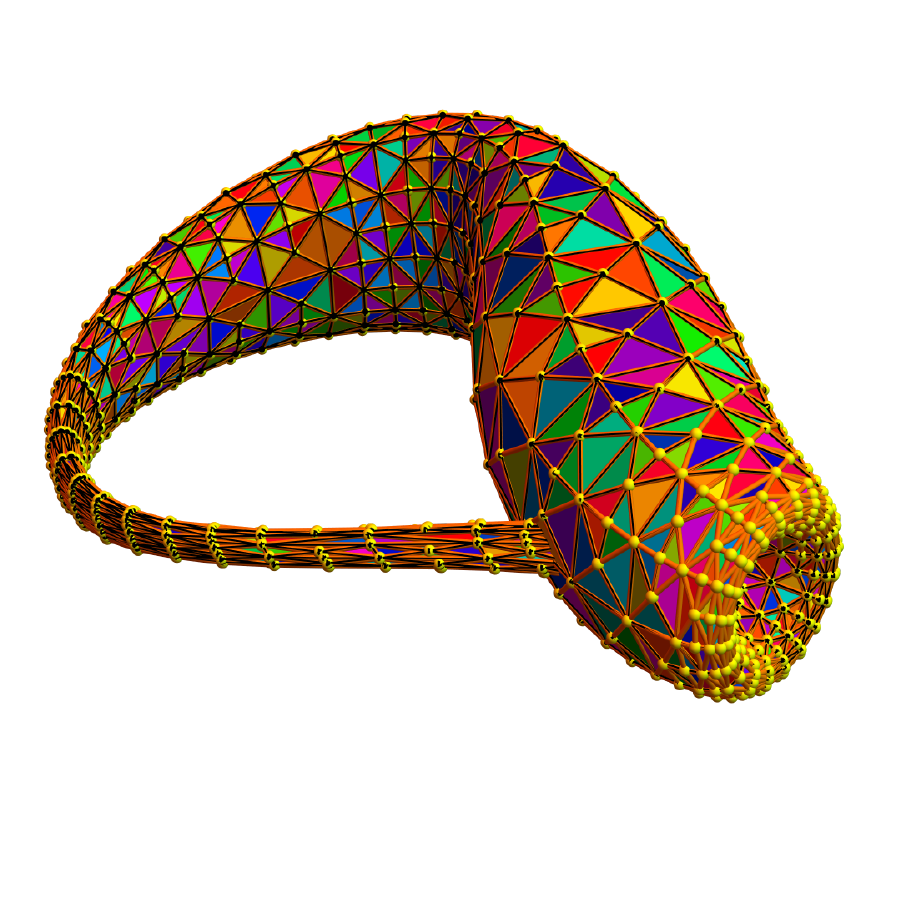}}
\label{Figure 0}
\caption{
We see a Klein bottle $G$. For a smaller version with $f$-vector $f=(|V|,|E|,|F|)=(50,150,100)$ 
we can compute everything like Betti vector $b=(1,1,0)$ and Euler characteristic 
$\chi(G) =|V|-|E|+|F|=1-1=0$. Its Dirac operator $D$ is a $300 \times 300$ matrix.
We measure $A(G) = |V|/8$. But similarly as for a flat Clifford torus, we have
here a rather uniform situation, where half the vertices have vertex degree $8$ 
and half have vertex degree $4$. 
}
\end{figure}

\begin{figure}[!htpb]
\scalebox{0.6}{\includegraphics{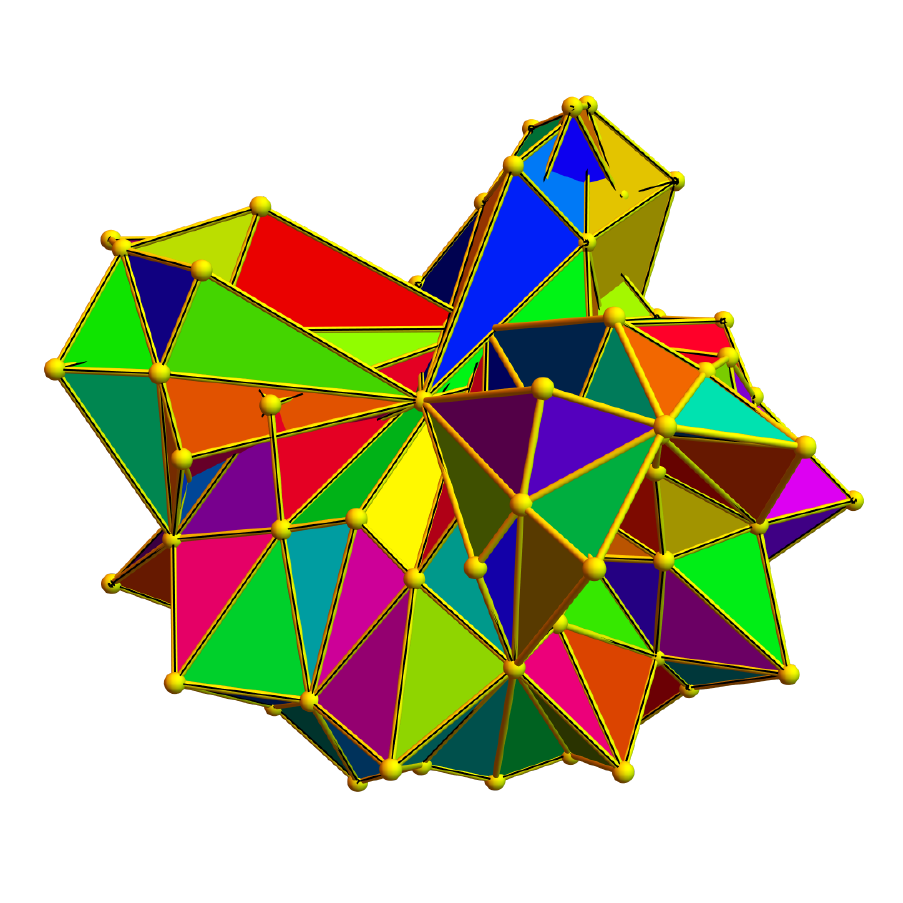}}
\label{Figure 0}
\caption{
A dunce $G$ hat with $f$-vector $f=(105, 320, 216)$, Betti
vector $(1,0,0)$ and torsion $A(G)=105$. The graph is an example of a graph
which is not contractible but which is homotopic to $1$. The theorem still
applies. 
}
\end{figure}

\begin{figure}[!htpb]
\scalebox{0.5}{\includegraphics{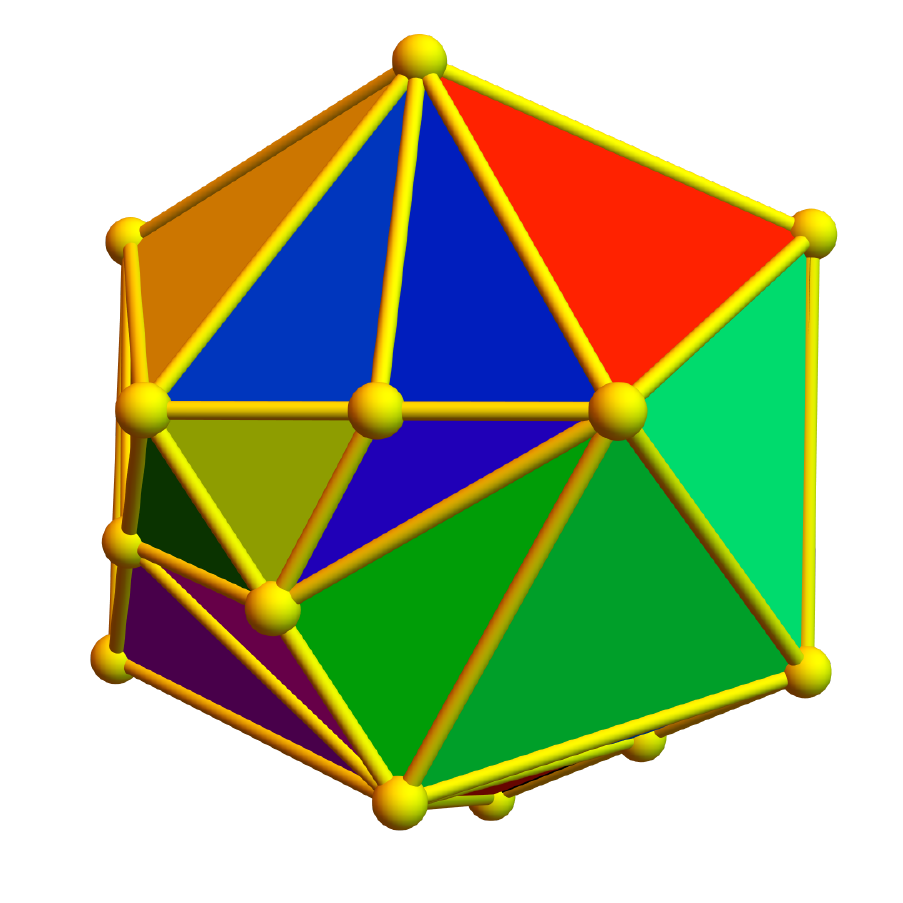}}
\scalebox{0.5}{\includegraphics{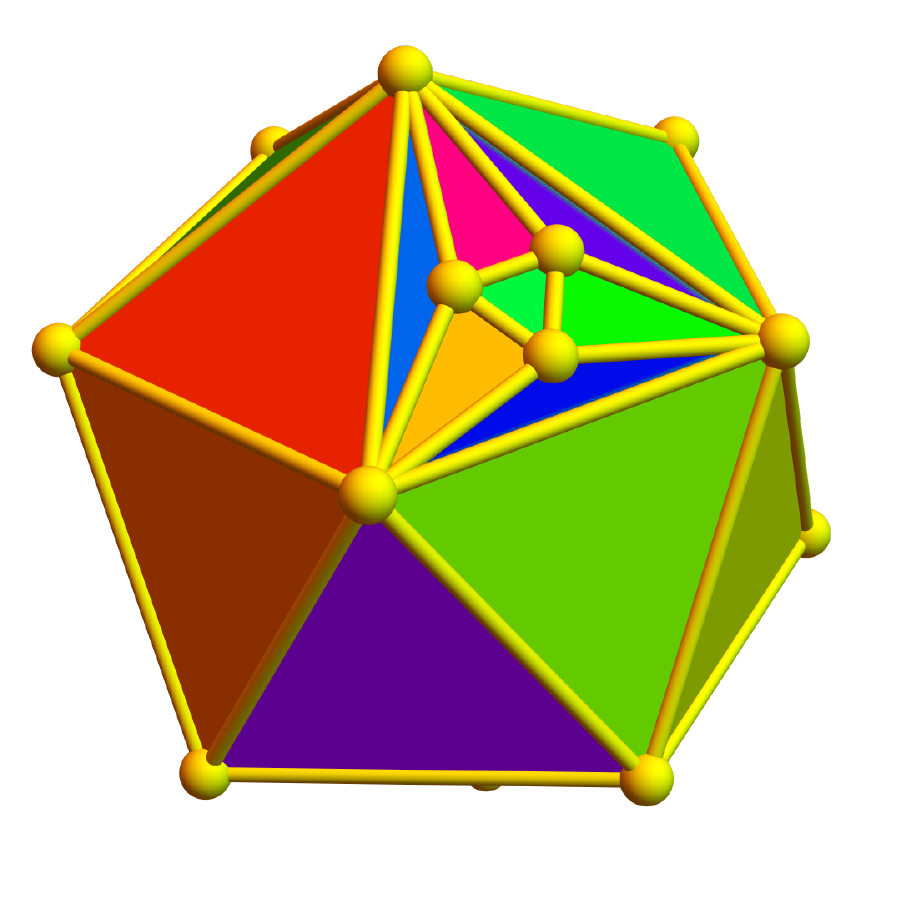}}
\label{Figure 0}
\caption{
Torsion is sensitive. Even for triangulations of spheres. To the 
left we see a 2-sphere with $f=(26,72,48)$ and $A(G)=13/24$ obtained by making random edge subdivisions
of an icosahedron. Edge subdivision preserves 2-spheres. To the
right we see a local refinement involving a triangle. This is of course no more a 2-sphere
as some unit spheres are not circular graphs anymore. The torsion $5/53$ has no relation 
any more to the $f$vector $(|V|,|E|,|V|)=(15, 39, 27)$. The Betti vector is $(1,0,2)$ as actually this
graph consists of two spheres glued together at a triangle and Euler Poincar\'e (which holds
for all graphs) looks $\chi(G) = 16-30+27=1-0+2$. 
}
\end{figure}

\begin{figure}[!htpb]
\scalebox{0.6}{\includegraphics{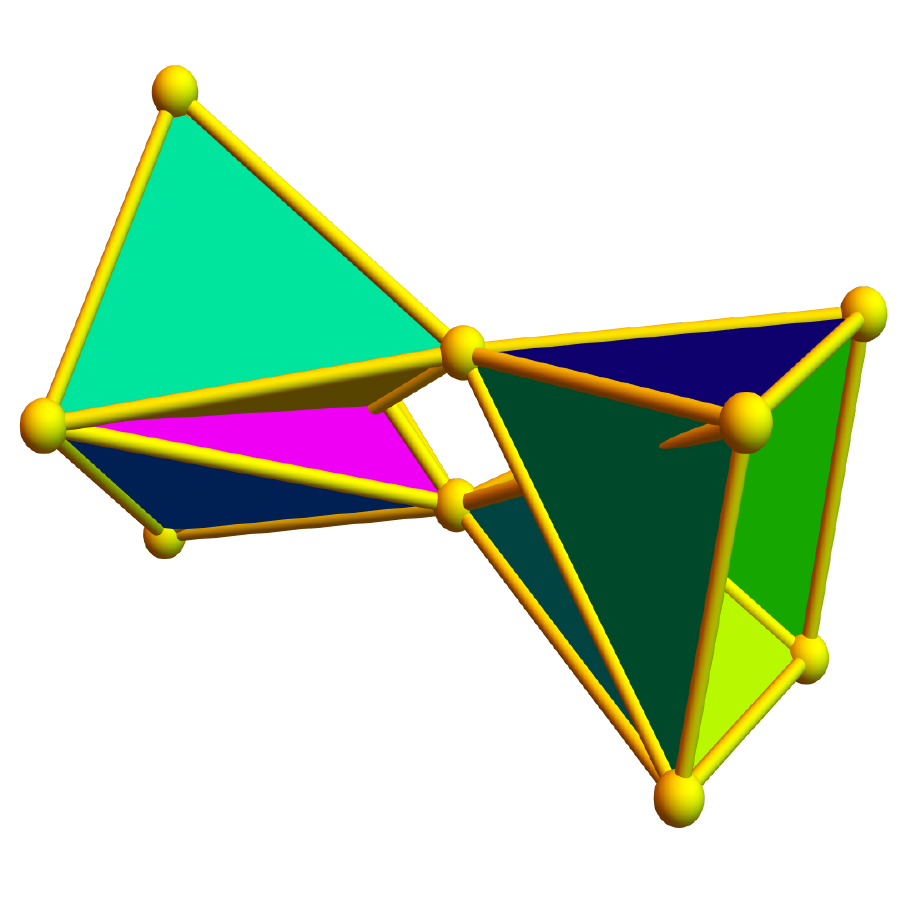}}
\label{Figure 0}
\caption{
A Dehn-Sommerville graph $G$ is a suspension of two disjoint circles 
$G= 2*C_4 \oplus S_0=(C_4 + C_4) \oplus S_0$. It is not a sphere like the octahedron $S_4 \oplus S_0$
This graph $G$ has the f-vector $f=(10, 24, 16)$, Betti vector $b=(1,1,2)$ and 
Euler characteristic $\chi(G) = 10-24+15=1-1+2=2$. The 
analytic torsion is $A(G) = 5/32$.  We experimentally see
that $A( 2C_{m} \oplus S_0) = (|V|/|V'|)4/m^2$. 
}
\end{figure}

\paragraph{}
An other consequence of the {\bf key lemma}
$$  A(G) = {\bf SDet}(D) = \prod_{k \; {\rm even}} {\rm Det}(D_k)/\prod_{k \; {\rm odd}} {\rm Det}(D_k) $$
is that for all finite simple graphs, 
$|{\rm Det}(D)| A = \prod_{k \; {\rm even}} {\rm Det}(D_k)^2$ and
$|{\rm Det}(D)|/A = \prod_{k \; {\rm odd}} {\rm Det}(D_k)^2$ are both squares.
We have seen that experimentally in 2013, but at that time could not see why. 
Both the pseudo determinant of the Dirac operator ${\rm Det}(D)$ of a
graph as well as the super pseudo determinant $A(G) =  {\rm SDet}(D)$ of $D$
are {\bf interesting functionals} on the category of finite simple 
graphs. The general case is largely unexplored. Interesting problems are
to investigate the {\bf expectation} of $A(G)$ on {\bf Erd\"os-R\'enyi spaces} or
to look for the {\bf maxima} and {\bf minima} of torsion $A(G)$ on all 
graphs with $n$ vertices. 

\paragraph{}
We also look at torsion $A_2(G)$ on the {\bf Wu complex}. The set of {\bf $k$-forms}
are now functions on all pairs of intersecting simplices $(x,y)$ with 
${\rm dim}(x)+{\rm dim}(y)=k$. The exterior derivative is defined as 
as before 
$$ df(x,y) = \sum_{(z,w) \subset (x,y)} f(z,w) {\rm sign}((z,w),(x,y)) \; . $$
The definition of torsion is the same as for the usual {\bf Euler complex}. Just take the 
super determinant of the {\bf Wu Dirac operator} $D_{Wu} = (d+d^*)^2$, where $d$ is 
the exterior derivative of the Wu complex. The reason for the name ``Wu complex" is 
that instead of 
$$  {\rm SDet}(\lambda D_{Euler}) = \lambda^{2 \chi(G)} {\rm SDet}(D_{Euler}) $$ 
with {\bf Euler characteristic} $\chi(G)$, we have now 
$$ {\rm SDet}(\lambda D_{Wu}) = \lambda^{2 \omega(G)}  {\rm SDet}(D_{Wu})$$ 
where $\omega(G)$ is the {\bf Wu characteristic}. Note that for odd dimensional
discrete manifolds, where $\chi(G)=\omega(G)$ are both $0$, torsion 
is independent of the scale of the exterior derivative. 

\paragraph{}
For Wu characteristic, there is no analog simple formula in the contractible case. 
The formulas are similar for spheres: we have $A_2(G)=1/(f_{dd} f_{00})$ for odd-dimensional 
spheres and $A_2(G)=f_{00}/f_{dd}$ for even-dimensional spheres, 
where $f_{kl}$ is the {\bf f-matrix of $G$} counting intersections of $k$ and $l$ 
simplices in $G$. Unlike for torsion, the Wu torsion $A_2(G)$ is no more 
expressible in a simple way through the $f$-matrix of $G$, if $G$ is contractible. 
Also the Wu characteristic $\omega(G)$ is, unlike the Euler characteristic $\chi(G)$ 
not a homotopy invariant. See \cite{valuation,CohomologyWu}.

\section{Analytic torsion}

\paragraph{}
A {\bf finite abstract simplicial complex} 
is a finite set of non-empty sets closed under the operation of taking finite non-empty subsets. 
The set of vertex sets of complete sub-graphs of a graph $G=(V,E)$ defines such a finite 
abstract simplicial complex.  It is called the {\bf Whitney complex} or {\bf clique complex} of $G$.
In order to to calculus, equip first each simplex $x$ with an {\bf orientation}. This is a {\bf choice of 
basis} and irrelevant for all computations. We usually label the vertices $V$ of the 
graph with integers and take the natural order of these integers on each simplex like $y=(5,7,11)$ for
a triangle with vertices $5,7,11$. Given $x \subset y$, define ${\rm sign}(x,y)=1$
as the sign of the permutation which maps $x$ into the induced orientation of $y$. 
If $x$ is not a subset of $y$ define ${\rm sign}(x,y)=0$. For $x=(5,11)$ for 
example, ${\rm sign}(x,y)=-1$. For $x=(5,7)$ we would have ${\rm sign}(x,y)=1$. 

\paragraph{}
The {\bf incidence matrices} $d_k(x,y) = {\rm sign}(x,y)$ define a {\bf differential complex} with 
{\bf exterior derivative} $d(x,y)={\rm sign}(x,y)$, where $x,y$ are simplices.
If $G$ has $n$ simplices, then $d$ is a lower triangular $n \times n$ matrix
of the same size as the Dirac operator $d+d^*$ or the Hodge Laplacian $L=(d+d^*)^2$. 
If $x$ is a $(k+1)$-simplex, then the derivative is define as 
$df(x) = \sum_{y \subset x, {\rm dim}(y)=k}, {\rm sign}(x,y)$. As $d_k d_{k-1}=0$, 
one gets the $k$'th {\bf cohomology group}. It is the vector space $H_k(G)= {\rm ker}(d_k)/{\rm im}(d_{k-1})$.
Identified it with the {\bf harmonic k-forms}, the null space of $L_k$. 
linear algebra is the most elegant way to compute $H_k(G)$. The cohomology theory of such a finite complex
is historically the first and also the simplest. For a discrete manifold 
$G=(V,E)$, this simplicial cohomology is equivalent to {\bf de Rham cohomology} 
of a {\bf smooth geometric realization} of the complex.
We are here never interested in the continuum. The cohomology is defined for any 
network, and not only for discrete manifolds. 

\paragraph{}
As a consequence of $d_{k} d_{k-1}=0$, the {\bf Hodge Laplacian} 
$L=D^2=(d+d^*)^2$ of the graph $G=(V,E)$ decomposes into block matrices 
$L_k=d_k^* d_k + d_{k-1} d_{k-1}^*$, the {\bf $k$-form Laplacians} or {\bf Hodge blocks}.
The first block $L_0=d_0^* d_0$ is the {\bf Kirchhoff Laplacian} $B-A$, 
where $B$ is the diagonal vertex degree matrix and $A$ is the 
{\bf adjacency matrix} of the graph. The matrix $L_0$ is the discrete analog of
$\Delta = {\rm div} {\rm grad}$ in calculus and has been introduced
by Kirchhoff before Betti even defined the incidence matrices 
$d_k$. Dirac saw the power of writing a Laplacian $L$ as a square $L=D^2$
and Hodge related the spectrum of $L = L_0 \oplus L_1 \oplus \cdots \oplus L_d$ 
with cohomology: the {\bf Betti number} $b_k$ of a manifold is the nullity 
of the $L_k$ and the null-space of $L_k$ identifies with cohomology.

\paragraph{}
The {\bf pseudo determinant} $R = {\rm Det}(A)$ of a finite matrix $A$ is
defined as the product of the non-zero eigenvalues of $A$.
The number ${\rm Det}(L_0)$ is known to be the number of {\bf rooted spanning trees} 
in $G$. If $F_k=d_k + d_{k-1}^*$ is the $n \times f_k$ submatrix of the Dirac operator 
$D=d+d^*$, selecting the columns of $D$ belonging to $k$-dimensional simplices, 
then $L_k=F_k^T F_k$ is a block in the Hodge Laplacian $L$ and 
${\rm Det}(L_k) = \sum_{|P|=f_k-b_k} \det(F_k(P))^2$ sums over the 
squares of all $f_k-b_k$ minors of $F_k$, with 
$b_k={\rm dim}({\rm ker}(L_k))$. 

\paragraph{}
We have to use here the {\bf general Cauchy-Binet identity}
$$ {\rm Det}(F^T G) = \sum_{|P|=k(F,G)} {\rm det}(F) {\rm det}(G) $$
\cite{CauchyBinetKnill} 
which holds for arbitrary $(n \times m)$-matrices $F,G$ and generalizes the {\bf Cauchy-Binet theorem},
in which either $F^T F$ or $F F^T$ has full rank and where ${\rm Det}$ is replaced with the usual determinant 
${\rm det}$. While it is appears a small matter to go from determinants to pseudo determinants, the linear algebra
appearing in networks forces the more general situation: the matrices $L_k$ are in general singular,
and the Dirac blocks $D_k$ are almost always are singular, even if the cohomology group $H^k(G)$ should be trivial.
A special case of Cauchy-Binet for pseudo determinants is the {\bf Pythagorean identity}
$$  {\rm Det}(F^T F) = \sum_{|P|=k(F)} {\rm det}(F)^2  $$
which is of particular interest if ${\rm det}(F)$ is either $1$ or $-1$. In this case, the pseudo
determinant {\bf counts geometric objects} like {\bf trees} or {\bf rooted trees}.
Having seen this, it becomes apparent why torsion is  {\bf partition function} a functional 
that counting geometric objects for any graph $G$. Also ${\rm Det}(D)$ has this property but ${\rm Det}(D)$
counts while ${\rm SDet}(D)$ super counts. 

\paragraph{}
The squares ${\rm Det}(F_k(P))^2$ of the minors of the columns $F_k = d_k+d_{k-1}^*$ 
are integers. But they are in general larger than $1$ so that there is no simple 
{\bf geometric interpretation} of the pseudo determinant yet. Fortunately, also
$D_k = d_k^* d_{k}$ allow to express torsion. We have
$$  {\rm Det}(L_k) = {\rm Det}(D_k) {\rm Det}(D_{k-1})  \; . $$
(We learned this first from the lecture notes \cite{Bunke2015} and later saw it also in
\cite{GelfandKapranovZwlevinsky} or \cite{BurghelaFriedlanderKappelerMcDonald}.
This is extremely important as it clears up the rather mysterious definition of analytic 
torsion in terms of {\bf powers} of the matrices $L_k$. 
Torsion is much more natural as a super determinant which does not involve powers of 
the blocks. 

\paragraph{}
Because this is so crucial, let us reformulate it
the definition $A(G)$ is much more natural when seen in terms of the matrices $D_k$. 
Torsion 
$$ A(G) = \prod_k {\rm Det}(L_k)^{k (-1)^{k+1}}  \; . $$
can be identified with the {\bf super pseudo determinant of the Dirac operator}
$$ A(G) = \prod_{k} {\rm Det}(D_k)^{(-1)^k} \; . $$
It is a {\bf ``Fermionic" version} of the pseudo determinant of the Dirac operator
$$ {\rm Det}(D) = \pm \prod_{k} {\rm Det}(D_k) \;  $$
which as an orientation oblivious determinant and so has a more {\bf ``Bosonic"} nature.

\paragraph{}
While the pseudo determinant counts types of trees in a graph defined by the simplicial complex,
the super pseudo determinant and so the analytic torsion is the ratio of the number of even trees 
over the number of odd trees. For the 4-sphere, the {\bf cross polytope} with $f$-vector 
$f=(10, 40, 80, 80, 32)$ for example, we have a Dirac determinant ${\rm Det}(D)=2^{220} 3^{40} 5^{15}$, a number 
with $95$ digits while torsion $A(G) = 10/32=0.3125$ is small. For the first few cross polytopes $S^k$
(k-dimensional spheres) we have $A(S^0)=1,A(S^1)=16,A(S^2)=3/4,A(S^3)=128,A(S^4)=5/16,A(S^5)=768$.
$A(S^4)=5/16$ is by far not the minimum of $A$ on all graphs with $10$ vertices (graphs with the cohomology of 
bouquets of spheres have far lower torsion) but
$A(K_{5,5}) = 3906250$ might be the maximum of $A$ on all graphs with $10$ vertices. As we can not enumerate
all graphs with $10$ vertices, we made experiments with random graphs. 

\paragraph{}
McKean and Singer paired the non-zero eigenvalues of 
even-form Laplacians $L_{2k}$ with 
the non-zero eigenvalues of the odd-form Laplacians $L_{2k+1}$. This allowed them 
to write the Euler characteristic ${\rm str}(1)$ as $\chi(G)={\rm str}(e^{-t L})$.
A consequence of this McKean-Singer symmetry is that for any finite simple graph, 
we have $1={\rm SDet}(L) = \prod_{k=0} {\rm Det}(L_k)^{(-1)^{k+1}}$.
The pseudo determinant filters out the complementing cohomology, which is also a
spectral part of $L$ but where the super symmetry between even and 
odd forms is broken if $\chi(G) \neq 0$. The McKean-Singer symmetry 
holds for all graphs and the formula ${\rm SDet}(L)=1$ becomes evident also 
from the identity ${\rm Det}(L_k) = {\rm Det}(D_k) {\rm Det}(D_{k-1})$. 
The super product of these telescopes to $1$. 

\paragraph{}
We define here the {\bf analytic torsion} of a graph $G$ as
$$   A(G) = \prod_{k=0} {\rm Det}(L_k)^{k (-1)^{k+1}}  \; . $$
It is the square of the definition usually taken in the continuum.
We do not take the square root, because we are in a combinatorial 
setting, where we are interested in the {\bf rational number} $A(G)$ and not
in the square root. We have $A(G) = {\rm SDet}(D)$, the super determinant
of the Dirac operator. Of course, also the super determinant of 
the exterior derivative $d$ makes sense, which is just the square root. 
For odd-dimensional manifolds, the classical notion
$\sqrt{A(M)}$ scales like Riemannian volume. Traditionally, the notion 
has been studied for odd-dimensional manifolds for which $\chi(G)=0$.
This is a situation, where $A(G)$ does not depend on how the scale of 
the exterior derivative. This follows from
$A(G,\lambda d) = \lambda^{2 \chi(G)} A(G,d)$. 

\paragraph{}
Torsion can been defined for arbitrary chain complexes as the super pseudo 
determinant of the Dirac operator $D=+d^*$ of the complex. 
In the graph case, a natural case is 
the {\bf Wu chain complex} rather than the Euler complex.
Other modifications can be done by deforming the exterior derivative. 
Examples are the nonlinear {\bf isospectral Lax deformation} 
\cite{IsospectralDirac2,IsospectralDirac} of $d$ or the 
{\bf Witten deformation}. As these deformations do not change the Laplacian,
torsion remains. However, we remind that under deformation, the Dirac operator
develops a diagonal part. If we go with the deformed $d+d^*$ (forgetting about
the dark matter part in the diagonal), then space expands using the Connes
formula and torsion will change because under a scaling $d \to \lambda d$, 
torsion changes like $A(G,\lambda d)  = \lambda^{2 \chi(G)} A(G,d)$, where 
$\chi(G)$ is the {\bf Euler characteristic}. In the case of the Wu differential complex,
it would scale like $A_2(G,\lambda d) = \omega^{2 \omega(G)} A_2(G,d)$,
where $\omega(G) = \sum_{x \sim y} \omega(x) \omega(y)$ is the 
{\bf Wu characteristic} of the graph $G$. \cite{valuation}.

\paragraph{}
Unlike in the continuum, the analytic torsion for graphs is interesting
also for even-dimensional discrete manifolds, like spheres. For 2-spheres, 
the formula for analytic torsion expresses the fact that the number of 
spanning trees in $G$ and its dual graph $G$ are the same, a fact which 
has been recognized already by Von Staudt of being
equivalent to the Euler-Gem Formula $|V|-|E|+|F|=2$ for $2$-spheres.

\paragraph{}
In order to render the combinatorially defined $A(G)$ 
a {\bf topological invariant} like making 
it invariant under Barycentric refinements, $A(G)$ needs to be scaled
by quantities given by the $f$-vector 
$f=(f_0,f_1, \dots, f_d)$. Already for circular graphs $C_n$, 
the number $A(C_n)=n^2$ agrees with the number of rooted 
spanning trees in the circle $C_n$. Indeed, in order to 
get a spanning tree, we can remove one of the $n$ edges and 
select one of the vertices as root. 

\paragraph{}
We experimentally
looked first to express $A(G)$ in terms of the $f$-vector 
$f=(f_0, \dots, f_d)$, where $f_k={\rm Tr}(1_k)$ is the number of 
$k$-dimensional simplices in $G$. 
Obviously, $A(G)$ can not be expressed in terms of the $f$ vector alone 
in general. The quantity changes also under local refinements. 
Already the one-dimensional case, where we understand
things pretty well, explains this.

\paragraph{}
Inductively, a graph $G$ is defined to be {\bf contractible}, if there is a vertex $x$ such that
$S(x)$ and $G-x$ are both contractible. The $1$-point graph is contractible.
A graphs is homotopic to $1$ if one can get using homotopy extensions (inverting
the process of removing a vertex with contractible unit sphere) and homotopy
reductions to $K_1$. The first result.

\begin{thm}[Torsion for graphs homotopic to 1]
For any $G=(V,E)$ homotopic to $1$ one has $A(G) = |V|$.
\end{thm}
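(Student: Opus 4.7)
The plan is to show that $A(G)/|V|$ is invariant under Whitehead moves $G\leftrightarrow G+_H v$ (with $H$ contractible) and equals $1$ on $K_1$. Since ``homotopic to $1$'' means $G$ is connected to $K_1$ by a finite sequence of such moves, this immediately yields $A(G)=|V|$.

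The base case $G=K_1$ is immediate from the Key Lemma: the only Dirac block $D_0$ is the $1\times 1$ zero matrix, whose pseudo determinant is the empty product, so $A(K_1)=1$. For the invariance step, it suffices (by symmetry of reduction and extension) to compute $A(G)/A(G-v)$ when $S(v)$ is contractible. I split each $k$-simplex of $G$ as either a simplex of $G-v$, or a simplex containing $v$; the latter are in bijection with $(k-1)$-simplices of $S(v)$ via $\sigma\mapsto\sigma\cup\{v\}$. This gives each incidence matrix $d_k^G$ a block structure whose diagonal blocks reproduce $d_k^{G-v}$ and a shifted copy of $d_{k-1}^{S(v)}$. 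Applying Cauchy-Binet for pseudo determinants to each $D_k^G=(d_k^G)^*d_k^G$ and taking the alternating super-product, the contributions from the two block types separate. Inductive application of the theorem to the smaller graph $S(v)$ gives $A(S(v))=|V(S(v))|$; the dimension shift $\sigma\mapsto\sigma\cup\{v\}$ flips the super-sign, so the $S(v)$ contributions telescope in the alternating super-product, leaving a residual factor of $|V|/(|V|-1)$ supplied by the classical matrix-tree theorem applied to the Kirchhoff block $D_0^G$ (removing the $v$-row and $v$-column divides the pseudo determinant of $L_0^G$ by $|V|$). This yields $A(G)/A(G-v)=|V|/(|V|-1)$, i.e.\ invariance of $A(G)/|V|$.

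The main obstacle is the presence of off-diagonal blocks in $d_k^G$ coupling ``simplex in $G-v$'' to ``simplex containing $v$'': they do not vanish, so the pseudo determinant of $D_k^G$ is not simply a product of the diagonal-block determinants, and the telescoping above must be justified via a signed-minor analysis in Cauchy-Binet. The clean route is to first prove the Duality Theorem stated in the excerpt -- a bijection between spanning trees in the even- and odd-simplex graphs $\mathcal{B},\mathcal{F}$, built inductively from the cone decomposition over $S(v)$ -- and then read off the identity
\[ A(G) \;=\; \frac{R_{\mathcal{B}}}{R_{\mathcal{F}}} \;=\; \frac{|V|\cdot T_{\mathcal{B}}}{T_{\mathcal{F}}} \;=\; |V| \]
from the super matrix tree theorem, where the extra factor of $|V|$ arises from the rooted-versus-unrooted distinction applied to the only block ($D_0$) whose vertex set meets $V(G)$.
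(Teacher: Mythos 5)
Your overall strategy (extract a factor $|V|$ by a matrix-tree argument and show the remaining even/odd contributions balance) is in outline the same as the paper's, which goes through the Key Lemma $A(G)={\rm SDet}(D)$, the shaving lemma ${\rm SDet}(A)={\rm SDet}(D)/|V|$ for the Dirac operator with first row and column deleted, and the claim that the shaved super determinant equals $1$ for contractible graphs. But your proposal has a genuine gap exactly at its inductive step, and you flag it yourself. Relative to the splitting of the simplices of $G$ into simplices of $G-v$ and cones $\sigma\cup\{v\}$ over simplices $\sigma$ of $S(v)$, the incidence matrices do have a triangular block structure, but the coupling block (each cone simplex has one face not containing $v$) is nonzero, so ${\rm Det}(D_k^G)$ does not factor into ${\rm Det}(D_k^{G-v})$ times a shifted ${\rm Det}(D_{k-1}^{S(v)})$, and no signed-minor analysis is actually carried out that would produce the claimed telescoping and the residual factor $|V|/(|V|-1)$. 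The appeal to the classical matrix tree theorem does not close this: deleting the $v$-row and $v$-column from $L_0^G$ gives the reduced Laplacian of $G$, whose determinant is ${\rm Det}(L_0^G)/|V|$ (spanning trees of $G$), not $L_0^{G-v}$ (the diagonal degrees differ), so the identity $A(G)/A(G-v)=|V|/(|V|-1)$ is consistent with the theorem but is never derived; proving it directly is essentially the theorem itself. There is also a minor organizational issue: since homotopy extensions increase $|V|$, the induction cannot be on the vertex number alone but must be on the length of the homotopy sequence together with a separate induction for the contractible graphs $S(v)$.

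Your fallback, namely to ``first prove the Duality Theorem by a bijection between spanning trees of $\mathcal{B}$ and $\mathcal{F}$ built inductively from the cone decomposition,'' is a statement of intent rather than a proof: that bijection is precisely the missing content, and the final display $A(G)=R_{\mathcal{B}}/R_{\mathcal{F}}=|V|T_{\mathcal{B}}/T_{\mathcal{F}}=|V|$ presupposes it. For comparison, the paper's own argument runs: Key Lemma, then shaving the first row and column of $D$ divides ${\rm SDet}$ by $|V|$, then the claim $\phi(G)={\rm SDet}$ of the shaved operator equals $1$, argued by an even/odd tree balance and a Meyer--Vietoris style induction over building up contractible graphs; so the step you leave open is the same step the paper itself treats only sketchily. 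As written, the proposal does not establish the theorem.
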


\paragraph{}
The statement follows from a symmetry between objects
appearing in the even truncated Laplacian $H^+$ and the odd truncated
Laplacians $H^-$ belonging to {\bf rooted simplicial complexes}.
The quantity $|V| \det(H^+)$ has a geometric interpretation as even dimensional objects
while $\det(H^-)$ must have a geometric interpretation as odd dimensional objects.

\paragraph{}
Graphs homotopic to $1$ are the simplest from a homotopy point of view.
Torsion changes in a rather subtle way under homotopy transformations however
if the graph $G$ is not contractible. It turns out that the case of spheres is
managable. We need a manifold structure now which in graph theory means that
every unit sphere $S(x)$ of a vertex $x$ (the graph induced by the neighbors)
is a $(d-1)$-sphere. These are inductive definitions starting with the assumption
that the empty graph is the $-1$-spheres.

\paragraph{}
We see here for odd-dimensional
discrete $d$-spheres that $A(G) = f_0(G) f_d(G)$ so that $\sqrt{A(G)}$
is in this spherical situation a volume, namely the geometric mean between
the vertex cardinality of $G$ and
the vertex cardinality of the dual graph $\hat{G}$. For even
dimensional spheres, where we conjecture $A(G)=f_0(G)/f_d(G)$,
the quantity goes to zero under Barycentric refinements which in line
with the fact that analytic torsion of even dimensional manifolds is
classically zero.

\section{Examples}

\paragraph{}
A $0$-dimensional graph has no edges and is a discrete set
$V$ of points. The analytic torsion of such a graph $G$ is 
just $f_0(G)=|V|$, the number of vertices. In general,
if $G=H \cup K$ is a disjoint union of graphs $H,K$, then 
$A(G) = A(H) + A(K)$. Already in the one-dimensional case
we can see that $A$ is {\bf not} a {\bf valuation}
$A(G)=A(H)+A(K)-A(H \cap K)$. To motivate the following
proposition, let us look at the question how we would have to 
scale $A(G)$ so that it becomes a valuation. 

\paragraph{}
Take two linear graphs $H,K$ of length $n$. They both have 
$A(H)=A(K)=n+1$. Glue them together to get a circle $G$ of 
length $2n$ with $A(G) =4n^2$ (the number of rooted spanning
trees). If we see $H,K \subset G$ with
$H \cap K = P_2$, the 2-point graph with $A(P_2)=2$, we
would have to scale $A(G)/(f_0 f_1)$ for circles and $A(G)/f_0$
for intervals in order to get something which satisfies the 
{\bf valuation property} $A(G)=A(H)+A(K)-A(H \cap K)$.

\begin{propo}
If $G$ is triangle free, then $A(G)$ is the number of rooted spanning trees in $G$. 
\end{propo}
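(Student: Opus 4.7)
The plan is to unfold the defining product and to observe that for a triangle-free graph every factor except one collapses, after which the classical matrix tree theorem finishes the argument.

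First I would use the fact that since $G$ is triangle-free, its Whitney complex carries no simplices of dimension $\geq 2$. Hence the exterior derivatives $d_k$ are the zero map for every $k \geq 1$, so the Dirac blocks $D_k = d_k^* d_k$ are empty (or zero) for $k \geq 1$, and their pseudo determinants equal $1$ by the empty-product convention. Invoking the key lemma, or equivalently unfolding $A(G) = \prod_k {\rm Det}(L_k)^{k(-1)^{k+1}}$ together with the identity ${\rm Det}(L_k) = {\rm Det}(D_k){\rm Det}(D_{k-1})$ established earlier, the super product collapses to the single surviving factor $A(G) = {\rm Det}(D_0) = {\rm Det}(L_0)$, the pseudo determinant of the Kirchhoff Laplacian. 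One can also see this directly from the original definition: in the product the $k=0$ exponent is $0$, the higher blocks $L_k$ with $k \geq 2$ are absent, and $L_1 = d_0 d_0^*$ is isospectral on its nonzero part to $L_0 = d_0^* d_0$, so the only surviving factor is again ${\rm Det}(L_0)$.

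At this point the classical matrix tree theorem closes the argument: the product of the nonzero eigenvalues of $L_0 = B - A$ equals $|V|$ times the number of spanning trees of $G$, which is by definition the number of rooted spanning trees of $G$. For a disconnected graph the same identity holds component by component, yielding the number of rooted spanning forests with one root per component.

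There is no real obstacle here; the proof is essentially a bookkeeping reduction plus a quotation of a classical theorem. The only care required is to work with the pseudo determinant and the Cauchy-Binet identity for it, exactly as stressed in the previous sections, so that the argument remains valid when $L_0$ is singular due to multiple connected components rather than assuming connectedness outright.
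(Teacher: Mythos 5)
Your proof is correct and takes essentially the same route as the paper: reduce the product to the single surviving factor ${\rm Det}(L_1)={\rm Det}(L_0)$ (the paper quotes McKean--Singer symmetry, which in the triangle-free case is precisely your isospectrality of $d_0^* d_0$ and $d_0 d_0^*$, or equivalently the key lemma collapsing to ${\rm Det}(D_0)$) and then invoke the classical matrix tree theorem. Your added remark on the disconnected case, where ${\rm Det}(L_0)$ counts rooted spanning forests with one root per component, is a precision the paper's proof leaves implicit.
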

\begin{proof}
If $G$ is triangle free, then the maximal dimension is $1$ and 
$$  A(G)= \frac{{\rm Det}(L_1)^1}{{\rm Det}(L_0)^0}  = \frac{{\rm Det}(L_1)^2}{{\rm Det}(L_0)}  \; . $$
By McKean-Singer symmetry ${\rm Det}(L_1) = {\rm Det}(L_0)$ and we have $A(G)={\rm Det}(L_0)$. 
By the matrix tree theorem, this is number of rooted spanning trees in $G$. 
\end{proof}

\paragraph{}
To illustrate this, we look at {\bf cacti graphs} of genus $1$. 
These graphs are obtained from a
circular graph $C$ by attaching arbitrary many trees.
We have just seen that $A(G) = {\rm Det}(L_0)$ is the
number of rooted spanning trees in $G$. As each of these trees is determined by
removing an edge in $C$ and selecting out a vertex in $V$, the number
$|V| |E(C)|$ is the number of rooted spanning trees in $G$. We therefore have 
$A(G)=|V| |E(C)|$. 

\paragraph{}
More generally, if we have a {\bf bouquet of $1$-spheres} 
obtained by a wedge sum of $1$-spheres then the Matrix tree theorem 
again shows that $A(G)$ depends on the product of the lengths 
of the fundamental cycles. We see from this example
already that $A(G)$ not only can depend on the $f$-vector 
$f=(f_0,f_1,f_2, \dots, f_d)$ of $G$ but that it also depends on 
{\bf torsion elements}, like the size of the circular non-contractible
parts of $G$. 

\begin{coro}
For a triangle-free graph obtained by attaching finitely many trees to 
a bouquet of spheres $C_{n_1} \wedge C_{n_2} \cdots \wedge C_{n_k}$,
we have $A(G)=|V| \prod_{k} |E(C_k)|$.
\end{coro}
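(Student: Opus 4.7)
The plan is to reduce directly to the matrix tree count provided by the preceding proposition. Since each $n_i \ge 4$ (otherwise the bouquet would contain a triangle), and since attaching trees cannot create new cycles, the graph $G$ is triangle-free. Hence by the proposition, $A(G)$ equals the number of rooted spanning trees of $G$, and the task reduces to a purely combinatorial count of spanning trees.

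The key step is to set up a bijection between spanning trees of $G$ and tuples $(e_1,\dots,e_k)$ where $e_i$ is an edge of the cycle $C_{n_i}$. Given such a tuple, removing each $e_i$ from $G$ leaves a connected subgraph: each cycle becomes a path, the wedge vertex keeps the bouquet connected, and the attached trees are undisturbed. The resulting subgraph has $|V|-1$ edges (we removed $k$ edges from a graph with $|V|-1+k$ edges, using $|E(G)| = |V|-1 + k$ since $G$ has first Betti number $k$), so it is a spanning tree. Conversely, any spanning tree $T \subset G$ must exclude exactly one edge from each fundamental cycle $C_{n_i}$, because including all edges of some $C_{n_i}$ would create a cycle in $T$, and excluding two would disconnect $G$ (the two resulting components of $C_{n_i}$ would no longer be joined through the rest of the graph, which meets $C_{n_i}$ only at the wedge vertex). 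Any tree edge attached outside the cycles must be present in $T$ for the same connectivity reason. This gives the bijection, so the number of spanning trees is $\prod_k n_k = \prod_k |E(C_k)|$.

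Finally, a rooted spanning tree is a spanning tree together with a choice of root vertex from $V$, giving a factor of $|V|$. Combining, $A(G) = |V|\prod_k |E(C_k)|$.

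The only genuinely substantive step is the bijection argument, and the mild obstacle there is verifying that the bouquet structure, where cycles share only the wedge vertex (or more generally share a tree structure via the attached trees), really forces ``one excluded edge per cycle''; this follows because the $k$ cycles form an independent basis of $H_1(G;\mathbb{Z})$, so the cycle space has dimension exactly $k$ and a spanning tree is determined by choosing, independently from each fundamental cycle, the unique edge it omits.
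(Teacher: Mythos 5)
Your proof is correct and follows essentially the same route as the paper: invoke the triangle-free proposition to identify $A(G)$ with the number of rooted spanning trees, observe that a spanning tree of a wedge of circles with trees attached is obtained by deleting exactly one edge from each cycle (the attached tree edges being bridges), and multiply by $|V|$ for the choice of root. The Betti-number counting you add to justify ``one omitted edge per cycle'' is a fine way to make rigorous what the paper states only informally via the matrix tree theorem.
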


\paragraph{}
This generalizes to the situation of graphs $G$ with fundamental group $F_n$. 
Take a bouquet of one dimensional spheres $C_k$ and then attach arbitrary many 
trees. In that case, the number of spanning trees is known. 

\paragraph{}
For complete bipartite graphs $K_{2,n}$ for which the 
Betti vector is $b= (1,n-1)$ we see 
$A(K_{2,n}) = 2^n (n+3) (n+1)$.
For $K_{3,n}$ with $b=(1,2n-2)$ we see
$A(K_{3,n}) = 3^n (n+4) (n+1)^2$. In general we can show that
the number of rooted spanning trees is

\begin{coro}
$$ A(K_{k,n}) = n^{k-1} k^{n-1} (n+k) \; . $$
\end{coro}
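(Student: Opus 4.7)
The plan is to reduce to the matrix tree theorem: since $K_{k,n}$ is bipartite, it contains no triangles, so by the preceding proposition $A(K_{k,n})$ equals the number of rooted spanning trees in $K_{k,n}$, which is exactly the pseudo determinant $\mathrm{Det}(L_0)$ of the Kirchhoff Laplacian. It therefore suffices to compute the non-zero eigenvalues of $L_0$ for $K_{k,n}$.

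Order the vertices so that the first $k$ belong to one partition class and the last $n$ to the other. Then
\[
L_0 \;=\; \begin{pmatrix} n\,I_k & -J_{k,n} \\ -J_{n,k} & k\,I_n \end{pmatrix},
\]
where $J_{k,n}$ is the $k \times n$ all-ones matrix. The kernel is spanned by the all-ones vector $\mathbf{1}_{k+n}$, giving the eigenvalue $0$ with multiplicity one. Any vector supported on the $k$-side and orthogonal to $\mathbf{1}_k$ is annihilated by the off-diagonal blocks and so is an eigenvector with eigenvalue $n$; this contributes the eigenvalue $n$ with multiplicity $k-1$. Symmetrically, vectors supported on the $n$-side and orthogonal to $\mathbf{1}_n$ give eigenvalue $k$ with multiplicity $n-1$. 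The one remaining eigenvector lies in the $2$-dimensional span of the two partial all-ones vectors $(\mathbf{1}_k,0)$ and $(0,\mathbf{1}_n)$; on this subspace $L_0$ acts as a $2\times 2$ matrix with trace $n+k$ and determinant $0$, so the missing eigenvalue is $n+k$.

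Multiplying the non-zero eigenvalues gives
\[
\mathrm{Det}(L_0) \;=\; n^{k-1}\, k^{n-1}\, (n+k),
\]
and combined with the first step this yields $A(K_{k,n}) = n^{k-1} k^{n-1}(n+k)$, as claimed. The only real step of substance is the eigenvalue computation, but the block structure of $L_0$ makes the four eigenspaces transparent; there is no serious obstacle. As consistency checks, the formula reduces to $A(K_{1,n}) = (n+1)$ rooted spanning trees on the star $K_{1,n}$ (which has a unique spanning tree on $n+1$ vertices), and recovers $A(K_{3,3}) = 3^2 \cdot 3^2 \cdot 6 = 486$ noted earlier in the introduction.
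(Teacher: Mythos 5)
Your proof is correct and follows essentially the route the paper intends: the paper states this corollary without an explicit argument, relying on the triangle-free proposition (so $A(K_{k,n})={\rm Det}(L_0)$ counts rooted spanning trees) together with the classical spanning-tree count of $K_{k,n}$. Your explicit computation of the Laplacian spectrum $\{0,\,n^{(k-1)},\,k^{(n-1)},\,n+k\}$ simply makes that count self-contained, and the checks against $K_{1,n}$ and $K_{3,3}$ confirm it.
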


For $K_{k,l,n}$ the maximal dimension is $2$, 
the Betti vector $b=(1,0,(k-1)(l-1)(n-1))$. 

\paragraph{}
As we have just seen in one dimensions, 
the quantity $A(G)$ involves not only the $f$-vector of $G$ but also
involves the volumes of generators of a homology group $\pi_1(G)$. 
This also is the case in two dimensions. Let us look for a bouquet of $m$ 
two-spheres $S_1,\dots,S_m$ to which an arbitrary number of 
trees has been attached. For tetrahedra-free graphs we have in general
$A(G)={\rm Det}(L_1)/{\rm det}(L_2)^2={\rm Det}(L_0)/{\rm Det}(L_2)$.
Now $f_0 {\rm Det}(L_2)$ is the number of rooted trees in $G$ and 
$f_2 {\rm Det}(L_0)$ is the number of rooted trees in the 
{\bf dual graph} $\hat{G}$ in which the triangles are the vertices and
two triangles are connected if they intersect in an edge. 
Remember that a $2$-sphere is a finite simple graph for which every unit
sphere is a circular graph. The following result has been known 
since the mid 19th century for planar graphs. We use it in the special 
case of 2-spheres $G$, where the dual graph is a triangle 
free graph. If $f(G)=(|V|,|E|,|F|)$ then 
$|V|-|E|+|F|=2$ by {\bf Euler's Gem} formula and $f(\hat{G}) = (|F|,|E|)$. 

\begin{lemma}[Maze lemma of van Staudt]
If $G$ is a $2$-sphere with dual graph $\hat{G}$ then the number of rooted
spanning trees in $G$ and $\hat{G}$ are the same. 
\end{lemma}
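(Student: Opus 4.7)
The plan is to construct an explicit bijection between the spanning trees of $G$ and the spanning trees of $\hat G$, which immediately gives $\tau(G) = \tau(\hat G)$; since rooting at any fixed vertex yields the same number $\tau$ by the matrix tree theorem, equality of the rooted counts follows. Since a $2$-sphere has every edge sitting on exactly two triangles, there is a canonical identification $e \leftrightarrow e^*$ between $E(G)$ and $E(\hat G)$. Given a spanning tree $T$ of $G$, I would set $T^* = \{e^* : e \in E(G) \setminus E(T)\}$ and argue that $T \mapsto T^*$ is a bijection onto spanning trees of $\hat G$, with inverse of the same complementary form.

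The first step is a cardinality check. Since $|E(T)| = |V|-1$, one has $|T^*| = |E|-|V|+1$, which equals $|F|-1$ by the Euler-Gem formula $|V|-|E|+|F|=2$ for $2$-spheres. This matches the number of edges required in a spanning tree of $\hat G$, which has $|F|$ vertices.

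The substantive step is to prove that $T^*$ is both acyclic and connected in $\hat G$. I would reduce both properties to a single topological input: a discrete Jordan property for $2$-spheres stating that every simple primal cycle $C \subset E(G)$ separates $V(G)$ into two nonempty pieces $A, B$ and partitions the triangles of $G$ into two groups whose dual adjacencies all cross $C$. Granted this, a cycle in $T^*$ would separate primal vertices $v_1 \in A$, $v_2 \in B$, and the unique $T$-path from $v_1$ to $v_2$ would have to include a primal edge $e$ dual to some edge of the cycle, forcing $e \in E(T)$ and $e^* \in T^*$ simultaneously, contradicting the definition of $T^*$. Conversely, if $T^*$ were disconnected, a cluster of faces would be isolated from the rest, and its boundary in $E(G)$ would be a disjoint union of simple cycles lying inside $E(T)$, contradicting that $T$ is a tree.

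The main obstacle is establishing the Jordan separation property from the inductive definition of a $2$-sphere rather than from a planar embedding. I would proceed by induction on $|V|$: choose a vertex $v$ with $G-v$ contractible and $S(v)$ a circular graph $C_v$, then analyze a simple cycle $\gamma$ according to whether it avoids $v$ (reducing to the contractible $G-v$ after filling in the disk bounded by $C_v$) or passes through $v$ (handled by the circular structure of $S(v)$, which splits $\gamma$ into two arcs whose endpoints lie on $C_v$ and forces the separation locally). Once this Jordan property is in place, the bijection above closes the proof and yields $\tau(G) = \tau(\hat G)$, which is equivalent to the stated equality of rooted spanning tree counts.
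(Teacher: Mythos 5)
Your proposal is essentially the paper's own argument: the von Staudt maze duality sending a spanning tree $T$ of $G$ to the complementary edge set $T^*$ in $\hat G$, with your discrete Jordan separation step merely spelling out in detail what the paper asserts when it declares the complement of a connected maze to be a spanning tree of $\hat G$. One caveat: what this bijection actually gives is equality of the \emph{unrooted} counts $\tau(G)=\tau(\hat G)$ (the rooted counts are $|V|\,\tau(G)$ and $|F|\,\tau(\hat G)$, which differ already for the octahedron, cf.\ the paper's $2304$ versus $3072$), so your closing claim that this is ``equivalent to the stated equality of rooted spanning tree counts'' only holds if ``rooted'' is read as rooted at one fixed vertex --- which is also how the paper itself uses the lemma in the corollary $A(G)=f_0(G)/f_2(G)$.
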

\begin{proof}
Draw both graphs $G$ and $\hat{G}$ in the same picture, where both graphs
have the same edge sets. A spanning tree $T$ in $G$ is a {\bf connected maze} 
in $G$. The complement of $T$ in $E$ defines a complement maze, a spanning
tree in $\hat{G}$. Now use that every
spanning tree in a graph $G=(V,E)$ has $|V|-1$ edges. 
Since $|E| = [ |V(G)|-1]  + [ |V(\hat{G})|-1] = |V|-1 + |F|-1$, 
this immediately gives $|V|-|E|+|F|=2$. 
\end{proof} 

\paragraph{}
This implies that if $G$ is a 2-dimensional connected discrete manifold
(a graph for which every unit sphere is a circular graph with 4 or more
elements), then the number of spanning 
trees in $G$ is the number of spanning trees in $\hat{G}$ if 
and only if $G$ is a $2$-sphere. 
The above lemma also gives a formula in two dimensions which we believe 
to hold in general $A(G)=f_0(G)/f_{d}(G)$ for spheres of even dimensions $d$. 

\begin{coro}
If $G$ is a $2$-sphere, then $A(G)=f_0(G)/f_2(G)$. 
\end{coro}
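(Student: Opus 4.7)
The plan is to reduce $A(G)$ to a ratio of two Kirchhoff-type pseudo determinants and then apply the Van Staudt maze lemma. Since $G$ is a 2-sphere, its Whitney complex has maximal dimension $2$, so only the blocks $L_0,L_1,L_2$ appear, and the only nonzero exponents in the definition $A(G)=\prod_k {\rm Det}(L_k)^{k(-1)^{k+1}}$ are $k=1,2$, giving $A(G)={\rm Det}(L_1)/{\rm Det}(L_2)^2$. The Key Lemma factorization ${\rm Det}(L_k)={\rm Det}(D_k){\rm Det}(D_{k-1})$, combined with the fact that $d_2=0$ (there are no $3$-simplices), forces ${\rm Det}(L_1)={\rm Det}(D_1){\rm Det}(D_0)={\rm Det}(L_0){\rm Det}(L_2)$. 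Substituting collapses the formula to $A(G)={\rm Det}(L_0)/{\rm Det}(L_2)$.

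Next I would identify both factors as counts of rooted spanning trees. The numerator ${\rm Det}(L_0)$ is the classical Kirchhoff pseudo determinant, and the matrix tree theorem recalled earlier gives ${\rm Det}(L_0)=f_0(G)\,\tau(G)$, the number of rooted spanning trees in $G$. For the denominator, note that since $L_2=d_1 d_1^{*}=D_1'$ acts on $2$-forms indexed by the triangles of $G$, one checks from the incidence signs that its diagonal entry at a triangle $y$ equals $3$ (one contribution per edge of $y$) and its off-diagonal entry at $(y,y')$ equals $-1$ whenever $y\ne y'$ share an edge, and $0$ otherwise; the minus sign is forced because $G$ is an orientable $2$-manifold, so the two triangles containing any given edge induce opposite orientations on it. Thus $L_2$ coincides with the Kirchhoff Laplacian of the dual graph $\hat G$, and the matrix tree theorem again yields ${\rm Det}(L_2)=f_2(G)\,\tau(\hat G)$.

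Combining the two identifications gives $A(G)=f_0(G)\,\tau(G)/(f_2(G)\,\tau(\hat G))$, and a direct application of the Van Staudt maze lemma, which sets up a bijection between spanning trees of $G$ and spanning trees of $\hat G$ via edge complementation, yields $\tau(G)=\tau(\hat G)$. Canceling these equal factors produces $A(G)=f_0(G)/f_2(G)$, as claimed.

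The main obstacle in this argument is the sign computation identifying $D_1'$ with the Kirchhoff Laplacian of $\hat G$: one must verify that the incidence signs inherited from an orientation of the complex conspire to give $-1$ (not $+1$) on every off-diagonal entry corresponding to edge-adjacent triangles. This is exactly where orientability of the $2$-sphere enters; the same computation would fail for a Klein bottle or projective plane, which is consistent with the experimental observations in the figures that torsion is not governed by the $f$-vector alone on non-orientable or higher-genus surfaces.
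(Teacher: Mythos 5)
Your argument is correct and follows essentially the same route as the paper: reduce $A(G)$ to ${\rm Det}(L_0)/{\rm Det}(L_2)$ (your Dirac-block step is exactly the McKean--Singer identity ${\rm Det}(L_1)={\rm Det}(L_0)\,{\rm Det}(L_2)$), identify ${\rm Det}(L_0)$ and ${\rm Det}(L_2)$ as rooted spanning tree counts of $G$ and of the dual graph $\hat G$, and cancel the unrooted counts via the Van Staudt maze lemma. One small caveat: with the paper's default vertex-ordering orientations the off-diagonal entries of $L_2$ need not all be $-1$, so the identification of $L_2$ with the Kirchhoff Laplacian of $\hat G$ requires first passing to a coherent orientation of the triangles (possible since the $2$-sphere is orientable), which is harmless because reorientation is conjugation by a diagonal $\pm 1$ matrix and leaves ${\rm Det}(L_2)$ unchanged.
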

\begin{proof}
$A(G)={\rm Det}(L_1)/{\rm det}(L_2)^2={\rm Det}(L_0)/{\rm Det}(L_2)$. 
The statement follows from 
$$ {\rm Det}(L_0)/{\rm Det}(L_2) = f_0(G)/f_2(G) $$ 
which expresses that the number of spanning trees in 
$G$ and the number of spanning trees in the dual graph are the same. 
\end{proof}

\begin{figure}[!htpb]
\scalebox{0.6}{\includegraphics{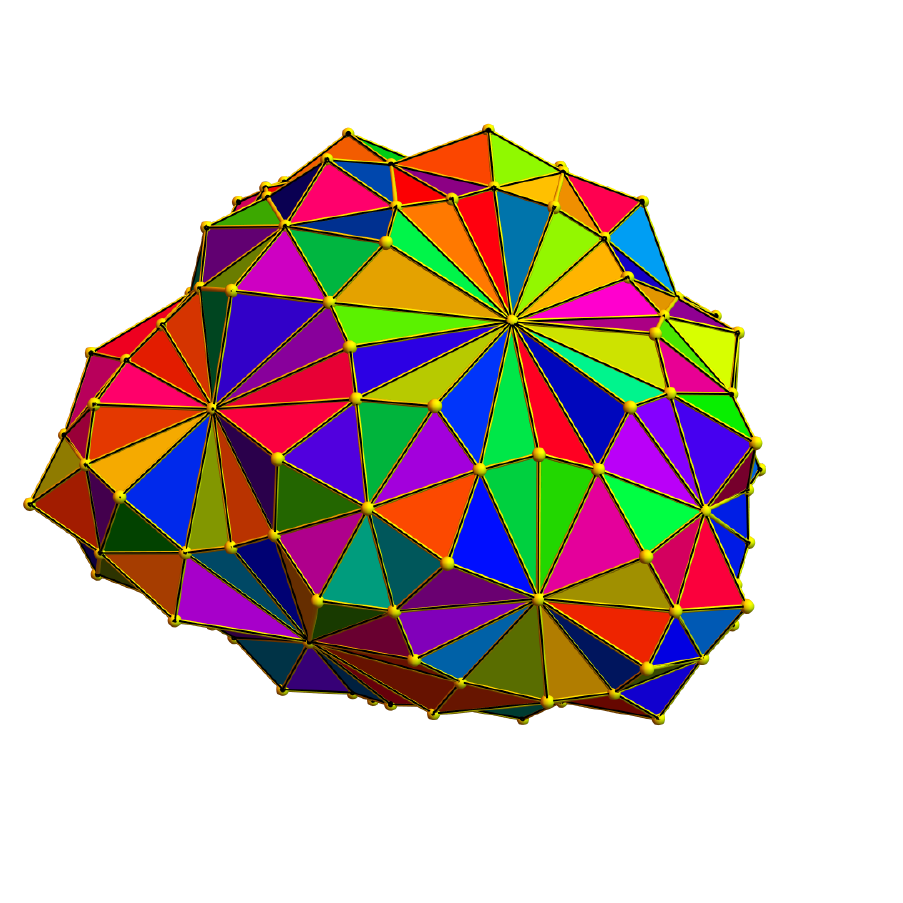}}
\scalebox{0.6}{\includegraphics{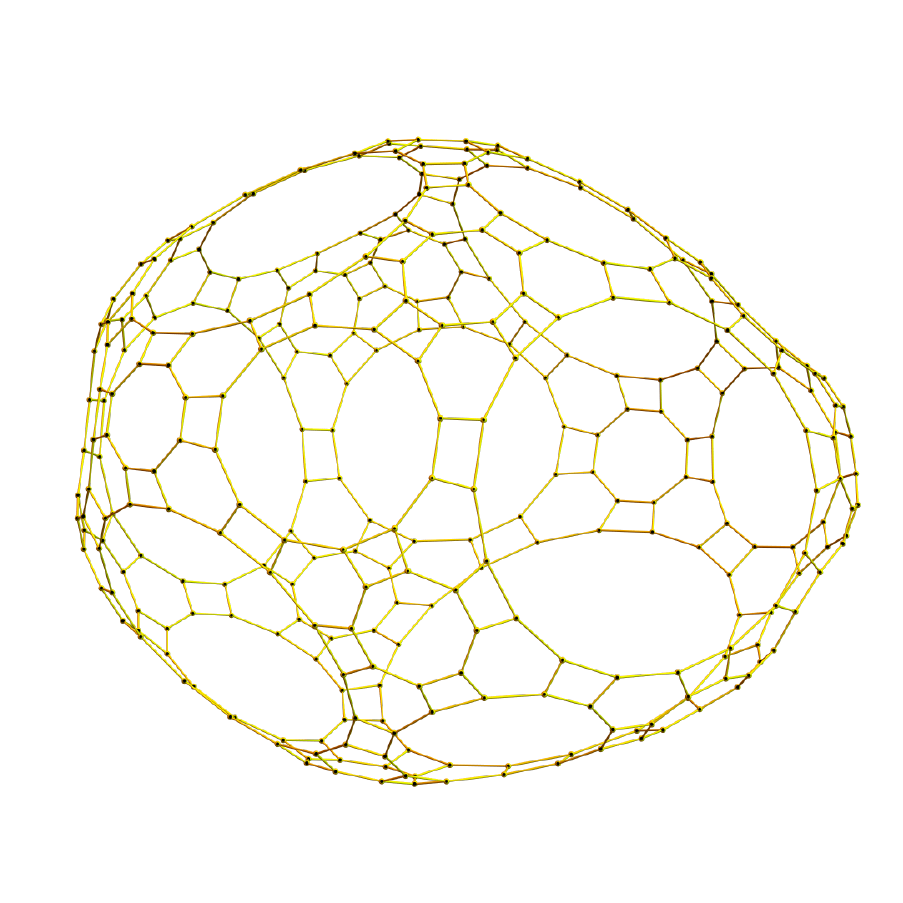}}
\label{Figure 0}
\caption{
A random $2$-sphere with $f$-vector $f=(164,482,324)$ and 
its dual sphere $\hat{G}$ which is triangle free as by definition,
every unit sphere has $4$ or more elements. We have 
$f_0(G)/f_2(G) = 164/324$ which is also the ratio of the 
number of rooted trees in $G$ 
and the number of rooted trees in $\hat{G}$
}
\end{figure}

\paragraph{}
The partition of the edge set $E$ into two complementary trees has
been used in 1847 by Von Staudt \cite{Staudt1847} to prove the Euler 
polyhedron formula $|V|-|E|+|\hat{V}|= 2$ for planar graphs $(V,E)$. 
See also \cite{dehnsommervillegaussbonnet}. A spanning tree of $G$ and the 
dual spanning tree of $\hat{G}$ partition $E$ into two sets 
of $|V|-$ and $|\hat{V}|-1 = |F|-1$ elements so that 
$|E|=(|V|-1) + (|F|-1)$ which is the Euler formula. 

\paragraph{}
For example, if $G$ is the octahedron graph with $f=(6,12,8)$ 
then the dual graph $\hat{G}$ is the cube graph. 
We have ${\rm Det}(L_0(G))=2304$ and 
${\rm Det}(L_0(\hat{G}))=3072$ and $3072/2304 = 8/6$. 
Indeed ${\rm Det}(L_0(G))/8=384$ is the number of spanning trees in $G$
and ${\rm Det}(L_0(\hat{G}))/6=384$ is also the number of spanning 
trees in $\hat{G}$. 

\begin{coro}
If $G$ is a bouquet of 2-spheres $S_{n_1} \wedge \cdots \wedge S_{n_k}$ 
with arbitrary number of trees attached. Then 
$A(G)=f_0(G)/\prod_{j} f_2(S_{n_j})$. 
\end{coro}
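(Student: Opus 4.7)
The plan is to reduce to the single-sphere corollary via the block structure of $L_2$ and the matrix tree theorem, exploiting that wedging at a single vertex decouples the edge-triangle incidence between the constituent spheres. First, $G$ is tetrahedra-free: each $2$-sphere $S_{n_j}$ contains no $K_4$ (all unit spheres are circular), and attaching trees adds no triangles. Hence the identity derived in the paper for tetrahedra-free graphs applies, giving
\[
 A(G) \;=\; \frac{\mathrm{Det}(L_0(G))}{\mathrm{Det}(L_2(G))},
\]
and the task reduces to computing the two pseudo determinants.

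The numerator is handled by the matrix tree theorem: $\mathrm{Det}(L_0(G)) = f_0(G)\,\tau(G)$, where $\tau$ counts spanning trees. Since every edge of an attached tree is a bridge (hence forced into any spanning tree) and the wedge point is a cut vertex separating the spheres, any spanning tree of $G$ restricts to a spanning tree of each constituent $S_{n_j}$, and the restrictions can be chosen independently. This yields $\tau(G) = \prod_j \tau(S_{n_j})$.

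The denominator rests on the observation that $L_2(G) = d_1 d_1^*$ is block-diagonal along the decomposition of $f_2(G)$ into the triangle sets of the individual spheres. Indeed $L_2(x,y) = \sum_e \mathrm{sign}(e,x)\mathrm{sign}(e,y)$ vanishes whenever the triangles $x,y$ share no edge; triangles from distinct spheres intersect at most in the wedge vertex, and tree edges appear in no triangle and contribute only zero columns to $d_1$. Therefore $\mathrm{Det}(L_2(G)) = \prod_j \mathrm{Det}(L_2(S_{n_j}))$, and the $2$-sphere corollary $A(S_{n_j}) = f_0(S_{n_j})/f_2(S_{n_j})$, combined with the matrix tree formula for $S_{n_j}$, rearranges to $\mathrm{Det}(L_2(S_{n_j})) = f_2(S_{n_j})\,\tau(S_{n_j})$.

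Substituting the two computations, the $\tau(S_{n_j})$ factors cancel and the claim $A(G) = f_0(G)/\prod_j f_2(S_{n_j})$ drops out. The main obstacle is verifying the block decomposition of $L_2(G)$; while combinatorially elementary, it is the crux of the reduction, since the entire argument depends on the fact that wedging at a $0$-dimensional object creates no $1$-dimensional intersection between the spheres and hence no cross-terms in $d_1 d_1^*$.
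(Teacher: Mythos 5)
Your argument is correct and is essentially the paper's own proof: the paper likewise reduces to $A(G)={\rm Det}(L_0)/{\rm Det}(L_2)$ for tetrahedra-free graphs, reads ${\rm Det}(L_0)$ as rooted spanning trees of $G$, and uses that the dual graphs of the individual spheres are disjoint — which is exactly your block-diagonality of $L_2=d_1d_1^*$ — together with the $2$-sphere case (Von Staudt) to cancel the tree counts. Your write-up simply makes explicit the details the paper leaves implicit (the product formula $\tau(G)=\prod_j\tau(S_{n_j})$ across the cut vertex and bridges, and ${\rm Det}(L_2(S_{n_j}))=f_2(S_{n_j})\,\tau(S_{n_j})$ from the preceding corollary), so no genuinely different route is involved.
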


\begin{proof}
Again, the ratio ${\rm Det}(L_0)/{\rm Det}(L_2)$ is the number of spanning 
trees in $G$ divided by the number of spanning trees in the dual 
graph $\hat{G}$. The dual graph has $\prod_{j} f_2(S_{n_j})$ 
spanning trees because the dual graphs $\hat{S_k}$ of the individual 
spheres $S_k$ are disjoint. 
\end{proof} 

\paragraph{}
A 2-dimensional manifold $G$ is a graph for which all unit spheres are
circular graphs of length $4$ or more.
In general, for 2-dimensional manifolds, 
${\rm Det}(L_2)$ is the number of rooted spanning trees in $\hat{G}$
and ${\rm Det}(L_2)/f_2(G)$ is the number of spanning trees in $\hat{G}$. 
The relation between the number of spanning trees in $G$ and 
$\hat{G}$ is not a topological invariant and very much 
depends on the topology as well as the metric realization. 

\paragraph{}
Lets look at some discrete manifolds with boundary. The case of an annulus 
is interesting as it is the simplest two-dimensional example with a non-trivial 
fundamental group. 

\begin{figure}[!htpb]
\scalebox{0.6}{\includegraphics{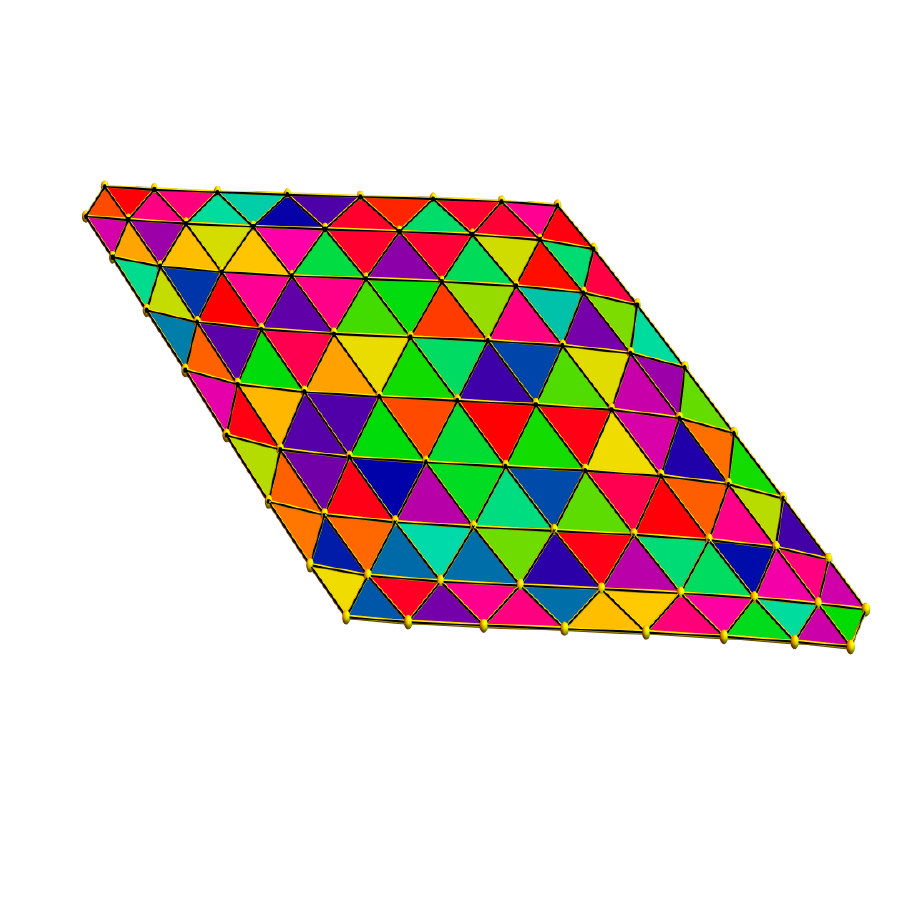}}
\scalebox{0.6}{\includegraphics{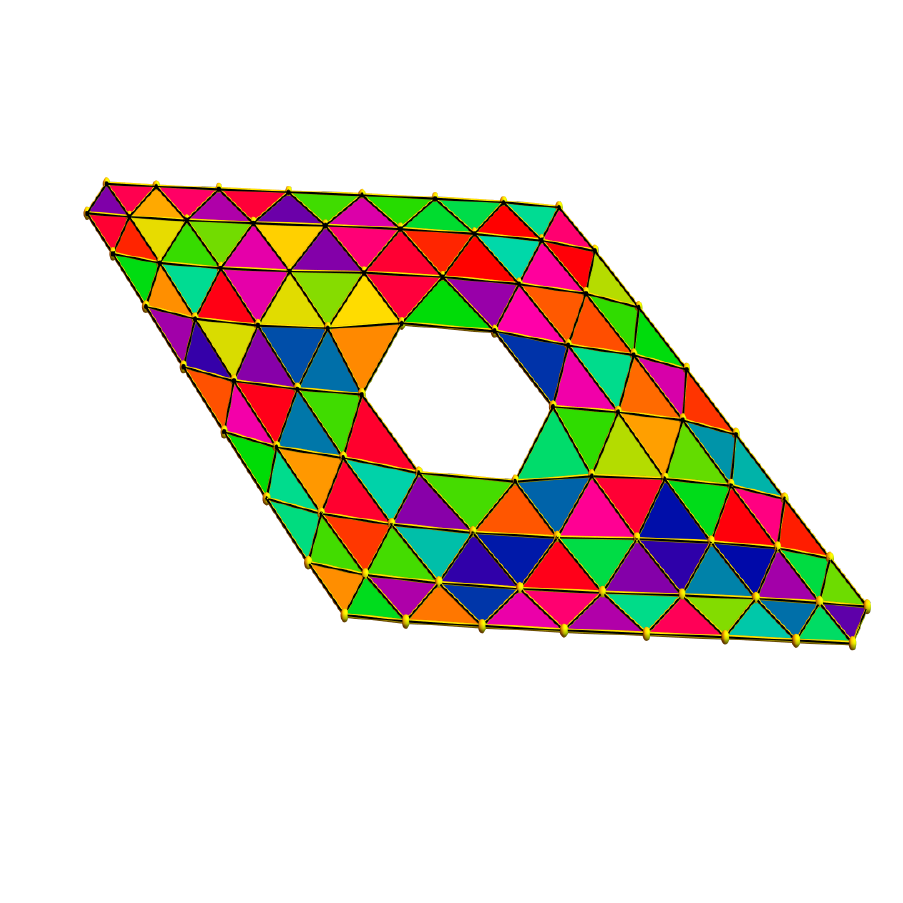}}
\label{Figure 4}
\caption{
To the left, we see a 2-ball with $f$-vector $(79, 204, 126)$
and Betti vector $(1,0,0)$.  It is a contractible graph for which we know
$A(G) = |V(G)|$ is the number of vertices. When drilling 
a hole, we get $b=(1,1,0)$ and torsion becomes complicated. 
In this case ${\rm Det}(D_0)= 294612705609082473864324666564747553420869058560$
is the number of rooted trees and 
${\rm Det}(D_1)=1971666443643036305928930398555703839352389632$.
}
\end{figure}

\paragraph{}
{\bf Example:} for the {\bf dunce hat} $G$ with $|V|=17$ vertices, where $A(G)=|V|$,
we have  \\
\begin{tiny}
${\rm det}(L)*17^2=14305210914701770615026899229224961147054878289*17^2=1425929411781^4$.
\end{tiny}

\paragraph{}
For complete graphs $G=K_n$, where $A(G)=|V(G)|=n$, we have
$\sqrt{{\rm det}(D)/|V|}=2^{n-2}-1$ also
because all eigenvalues of $D$ are either
$\pm \sqrt{n}$ or $0$.

\paragraph{}
For a $3$-spheres, we have $A(G)=|V'|/|V|$. The torsion is 
$$ A(G)=({\rm Det}(L_1)^2 {\rm Det}(L_3)^4)/({\rm det}(L_0)^1 {\rm det}(L_2)^3) \; . $$
This simplifies with McKean-Singer to 
$$   A(G) = \frac{{\rm Det}(L_3)^2 {\rm Det}(L_0)^0}{{\rm det}(L_2)^1} \; . $$
We have ${\rm Det}(L_0)/f_0$ as the number spanning trees in $G$
and ${\rm Det}(L_3)/f_3$ as the number of spanning trees in $\hat{G}$. 

\begin{conjecture}
$A(G) = \prod_{i=0}^m f_2(S_i)/(m f_0(G))$. 
\end{conjecture}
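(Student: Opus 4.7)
The plan is to combine the Key Lemma $A(G)=\mathrm{SDet}(D)=\prod_k \mathrm{Det}(D_k)^{(-1)^{k+1}}$ with the Super Matrix Tree Theorem, thereby reducing the conjecture to a combinatorial count of rooted spanning trees in the even- and odd-simplex graphs $\mathcal{B}$ and $\mathcal{F}$ associated with $G$. Since $G$ is a wedge $S_0 \vee \cdots \vee S_m$ of spheres glued at a single vertex $v$, every simplex of positive dimension belongs to exactly one $S_i$; only the $0$-skeleton is shared.

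The first step is to exploit the block structure of the incidence matrices. For $k \geq 1$ the matrix $d_k$ of $G$ decomposes as a block diagonal indexed by the $S_i$, so $\mathrm{Det}(D_k) = \prod_i \mathrm{Det}(D_k(S_i))$. This lets me apply the sphere case of the main theorem to each $S_i$ separately, assembling an expression for $\prod_{k \geq 1} \mathrm{Det}(D_k)^{(-1)^{k+1}}$ purely in terms of the per-sphere quantities $f_0(S_i)$ and $f_d(S_i)$. For $k=0$ the situation is genuinely different: $L_0(G)$ is not block diagonal because the wedge vertex $v$ is shared, but the classical Matrix Tree Theorem implies that the number of spanning trees of $G$ factors as the product of the numbers of spanning trees of the $S_i$ once $\mathrm{Det}(L_0)$ is divided by the appropriate $|V|$.

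Combining these factorizations, the super product telescopes most of the per-sphere contributions and leaves a residue controlled by $f_0(G)$ and the wedge identification. The main obstacle will be the bookkeeping at the wedge point: rooted spanning trees of $G$ do not factor as products of rooted spanning trees of the $S_i$, because a single global root must be chosen rather than $m+1$ local roots, one per piece. Tracking this mismatch via inclusion-exclusion at $v$ is what should produce the combinatorial factor $1/(m f_0(G))$, and getting the exact constant right is the delicate point; the analogous factorization of $\mathcal{F}$-trees across the wedge must be matched against that of $\mathcal{B}$-trees. A secondary worry is that each summand $S_i$ contributes to the top cohomology of $G$, enlarging $\mathrm{ker}(L_d)$ and altering the rank corrections that appear inside each pseudo-determinant; I would verify that these nullity shifts cancel in the super product before attempting to pin down the final constant.
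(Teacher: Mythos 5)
You are trying to prove a statement that the paper itself leaves as a \emph{conjecture}: there is no proof in the paper to compare against, only numerical experiments, and the paper does not even state precisely what class of graphs $G$ and what spheres $S_0,\dots,S_m$ the formula refers to. Your proposal fills in hypotheses of its own (a wedge $S_0\vee\cdots\vee S_m$ of $2$-spheres at a single vertex), and this is where the trouble starts. For exactly that class of graphs the paper \emph{proves} a corollary, via van Staudt duality, that $A(G)=f_0(G)/\prod_j f_2(S_{n_j})$ --- a formula of the shape $f_0/\prod f_2$, essentially the reciprocal of the conjectured $\prod_i f_2(S_i)/(m\,f_0(G))$, and with no factor $m$ at all. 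So if your blockwise factorization of $D_k$ ($k\ge 1$) over the summands, combined with the matrix-tree bookkeeping of rooted spanning trees at the wedge vertex, is carried out correctly, it reproduces the corollary's formula and therefore cannot yield the conjectured one; conversely, if the conjecture concerns a different gluing (e.g.\ spheres sharing a face, or a bouquet of higher-dimensional spheres, as its placement after the $3$-sphere discussion suggests), then the block-diagonality you rely on for $k\ge 1$ and the sphere-by-sphere application of the main theorem both fail, since positive-dimensional simplices are then shared between summands.

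Beyond this mismatch of hypotheses, the proposal is a plan rather than a proof: the two steps you yourself flag as delicate --- extracting the constant $1/(m f_0(G))$ from the inclusion-exclusion at the wedge point, and checking that the enlarged kernels of the top Laplacian (each summand contributes to $b_d$) do not disturb the pseudo-determinant ranks --- are precisely where the content would have to lie, and nothing in the telescoping you describe produces a factor $1/m$. As it stands the argument would at best re-derive the already proven bouquet corollary, and it gives no route to the conjectured identity.
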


\begin{figure}[!htpb]
\scalebox{0.5}{\includegraphics{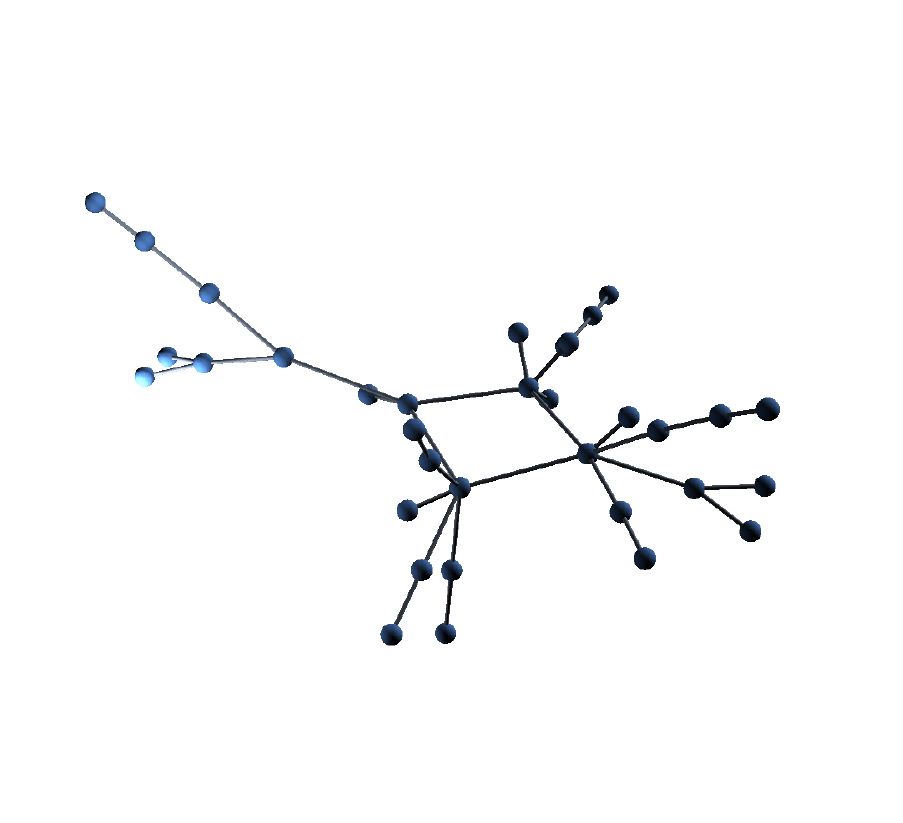}}
\scalebox{0.5}{\includegraphics{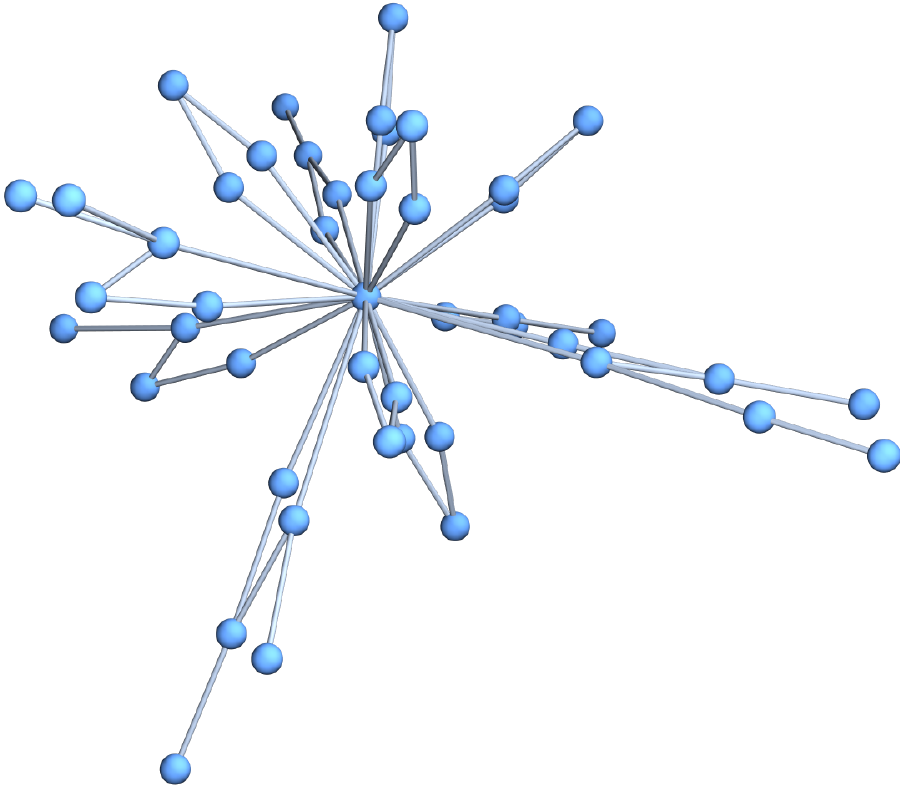}}
\label{Figure 1}
\caption{
A cactus graph of genus $1$ is a circular graph with trees attached. 
The analytic torsion is $A(G) = |V(G)| |E(C)|)$, where $C$ is the 
single circular sub-graph of $G$. 
To the right we see a one-dimensional genus 12 graph with 
fundamental group $F_{12}$, where $A(G) = |V(G)| \prod_i |E(C_i)|$
and $V(C_i)$ are the lengths of the loops $C_i$ generating the fundamental group. 
}
\end{figure}

\paragraph{}
Given an integer $n$, we can ask for which $p$ is the expectation
$E_{n,p}[A(G)]$ of Torsion on the probability space $\Omega_{n,p}$ of
Erd\"os-R\'enyi Graphs with $n$ vertices. This is difficult as we do
not know already the expected Betti numbers $E_{n,p}[b_k(G)]$.
For now, we can just make experiments.

\begin{figure}[!htpb]
\scalebox{0.7}{\includegraphics{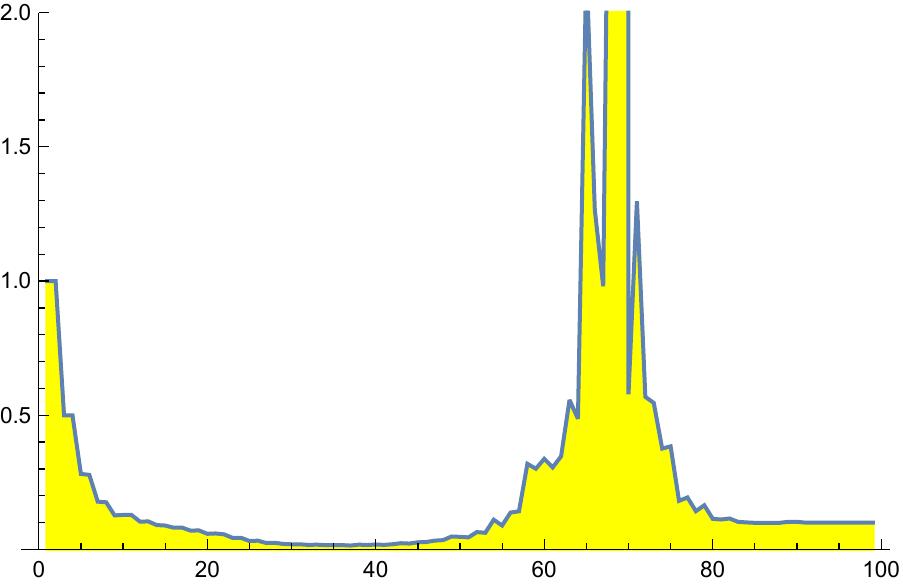}}
\label{Figure 3}
\caption{
A numerical computation of the expectation of analytic
torsion on $E(10,p)$ from $p=0$ to $p=1$. We computed
}
\end{figure}

\paragraph{}
The graph complement of cyclic graphs $C_n$ is interesting for various reasons
\cite{GraphComplements}. We measure for $A(C_n^c)$ the numbers \\
$(1,1,1,4,25,50,\frac{49}{5},\frac{4}{5},\frac{75}{196}, \frac{100}{21},\frac{1452}{7}, 
\frac{39204}{49}, \frac{169}{4}, \frac{49}{121},\frac{1620}{20449})$.
and for the complements of linear path graphs  \\
$(1,1,2,4,\frac{55}{3},\frac{156}{11},7,\frac{104}{85},\frac{45}{19},10,\frac{253}{2},\frac{1260}{17}, 13, 
\frac{931}{1334})$. We understand here the cases when $L_n^c$ is contractible in which case we have $A(L_n^c) = n$. 
We have not yet figured out whether there is a formula for all these rational numbers. 

\begin{figure}[!htpb]
\scalebox{0.6}{\includegraphics{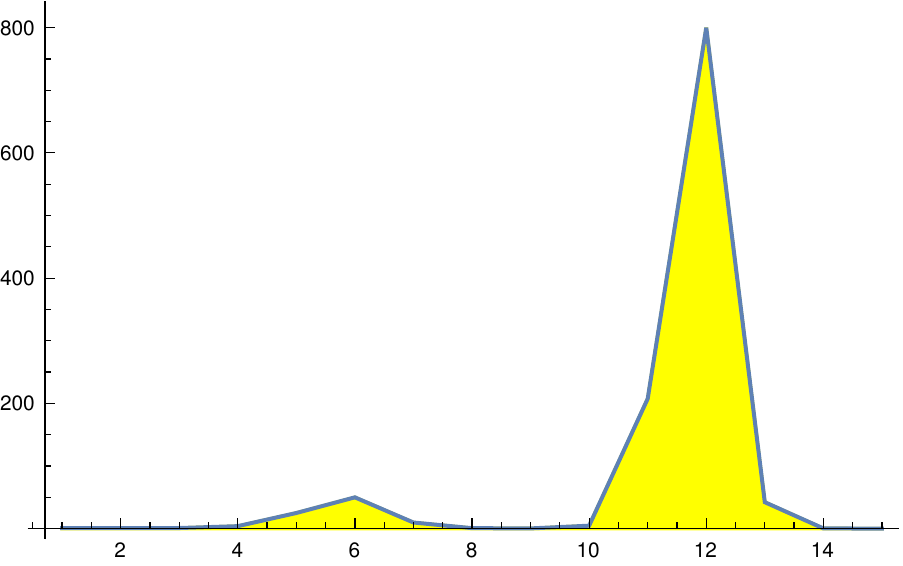}}
\scalebox{0.6}{\includegraphics{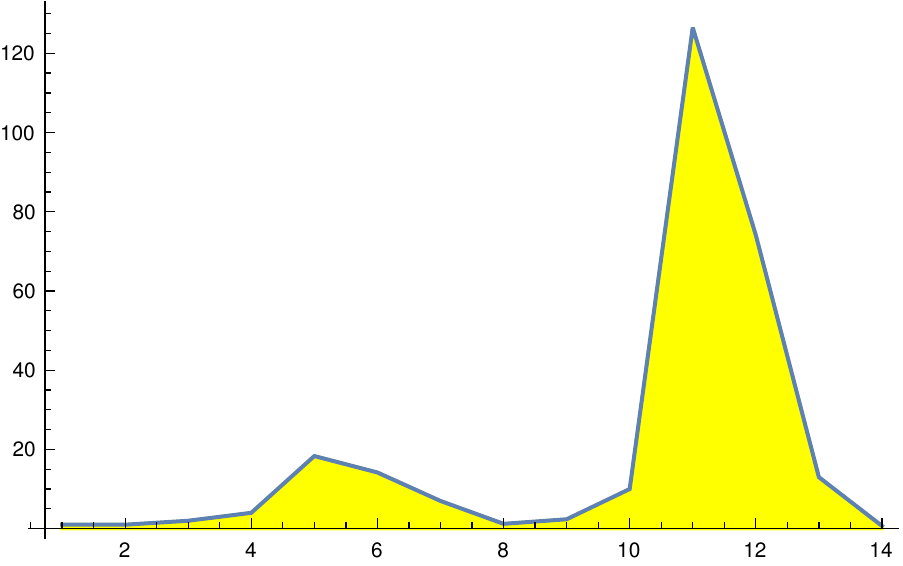}}
\label{Figure 4}
\caption{
To the left we see the first 15 values of analytic torsion of the 
complements of circular graphs $C_n$. To the right the first
14 values of analytic torsion of the complements of linear path graphs $L_n$. 
}
\end{figure}

\section{Remarks}

\paragraph{}
Looking at graphs is almost equivalent than to look for 
{\bf finite abstract simplicial complexes}. 
Any graph defines a simplicial complex, the Whitney complex and
any finite abstract simplicial complex $G$ defines a graph 
$(V,E)=(G,\{ (x,y), x \subset y or y \subset x\} )$
given by incidence. The language of graphs 
is more approachable as we are familiar with graphs and
networks like street or subway networks or family trees early on, while 
simplicial complexses already involve the concept of sets.
Simplicial complexes are amazing too \cite{AmazingWorld}.

\paragraph{}
$A(G)$ is the squared {\bf super determinant} ${\rm SDet}(D) = {\rm SDet}(d)^2$ of 
the chain complex. The notion of a determinant of a chain complex has been  put forward 
in \cite{GelfandKapranovZwlevinsky}.
Since $A(G)$ is a combinatorial notion and defined for any finite simple graph $G$
and not only if $G$ is a discrete manifold or evaluated in some limiting cases,
we do not take the root $\sqrt{A(G)}$. This allows us to stay rational.

\paragraph{}
The manifold case needs analysis: 
the {\bf Ray-Singer determinant} of a manifold defined by analytically
continued {\bf Minakshisundaram-Pleijel zeta functions} which can 
be defined using {\bf heat kernels}. This requires assumptions or the 
``magic hand waving" of a jedi assuring that the pole singularities which 
might occur for the individual zeta functions cancel at $0$ \cite{Witten2021}.
At least initially, Ray-Singer had to make strong assumptions on the cohomology
of the manifold. 

\paragraph{}
In the case of the circle $M=\mathbb{T}^1=\mathbb{R}/(2 \pi \mathbb{Z})$ 
for example, where $L_0=L_1=-d^2/dx^2$, we have the eigenvalues $\lambda_n = n$
of the Dirac operator $i \frac{d}{dx}$ with eigenvectors $e^{i n x}$. 
for $n \in \mathbb{Z}$. This shows 
$\zeta_M(s)=2 \zeta(2s)$ with the {\bf Riemann zeta function} 
$$  \zeta(s) = \sum_{n> 0} n^{-s}   \; . $$
we have included the factor $2$ is there because the spectrum of $D^2$ on $0$-forms
or $D^2$ on $1$ forms are both doubled. 
We have $\zeta_M'(0)=4 \zeta'(0)=-2 \log(2\pi)$ and 
${\rm Det}_M(L_0) = {\rm Det}_M(L_1) =  e^{-\zeta_M'(0)} = (2\pi)^2$. 
Therefore, $A_{RS}(M)=$  $\sqrt{{\rm Det}(L_0)^0/{\rm Det}(L_1)^1}$ $=(2\pi)$. 
In order not to confuse the torsion notions, we call $A_{RS}(M)$  the 
square root of $A(M)$. This is now the volume (circumference) of the circle. 
The formula $A(M) = (2\pi)^2$ for the squared torsion is the continuum the
equivalent of the formula $A(C_n) = n^2$ for {\bf circular graphs} $C_n$. 
This example illustrates already in the simplest possible manifold case that 
analytic continuation issues occur. 

\paragraph{}
The Hodge Laplacian $L$ contains spectral information and much is certainly 
still hidden. The Betti numbers $b_k(G)$ are the dimensions of the space 
of Harmonic $k$-forms ${\rm ker}(L_k)$ which according to Hodge are the 
$k$'th cohomology groups. (The observation that everything works just
using linear algebra in the discrete seems first have been done by 
\cite{Eckmann1}). The fact that the spectral data define analytic data in the 
form of zeta functions and so lots of other quantities will certainly lead
to more interesting quantities like the {\bf roots of the zeta function}. 

\paragraph{}
The McKean-Singer symmetry ${\rm str}(L^k)=0$ for $k>0$ implies 
${\rm str}(e^{-t L}) = {\rm str}(1) = \chi(G)$,
pairs the {\bf non-zero eigenvalues} of even and odd forms and 
immediately implies the {\bf McKean Singer relation}
$\prod_k {\rm Det}(L_k)^{(-1)^k}=1$ which is a consequence of the fact
that the {\bf Dirac operator} $D=d+d^*$
produces this {\bf super-symmetry relation} or spectral symmetry 
between even and odd forms. 
The simplest kind of super symmetry is the existence of a self-adjoint $P$ 
with $P^2=1$ and $DP=-PD$. This implies that if $D v = \lambda v$
we have $D Pv = -P Dv=- P \lambda v= - \lambda P v$ so that $P$ pairs
eigenvectors. The matrix $D$ maps non-Harmonic eigenvectors of $L$ on 
even forms to eigenvectors on odd forms and vice versa. 

\paragraph{}
If $G$ is contractible, there is
only one harmonic $0$-form, the constant function. Let $H$ be the matrix
obtained from $L$ by deleting the first row and first column.
In the contractible case $H$ is invertible. We can write $H$ as a direct
sum $H^+ \oplus H^-$ of the even-dimensional blocks and odd-dimensional 
blocks. The McKean-Singer relation can be rephrased using usual determinants
$$   {\rm det}(H^+) = {\rm det}(H^-) $$

\paragraph{}
By writing the determinant as a sum of permutations, we hope to have
a pairing between these elements, if the graph $G$ is contractible. 
How would such a pairing look like? Can we pair every permutation of
the $(n-1)/2$ simplices $H^+$ with a permutation of the 
$(n-1)/2$ simplices of $H^-$. If that would be the case, we could try
to get a pairing between rooted versions and get analytic torsion.

\paragraph{}
We also expect that we can in the case of the truncated $D$ (called $M$),
write $M=E^2$ for some complex $E$. 
Longer shot: by building up the complex, we can always pair a positive 
new eigenvalue with an even simplex and one with an odd simplex. This
pairing of simplices can give us a pairing of permutations. Now we 
would have to show that the product of the values are the same. 

\paragraph{}
An other corollary is that if $G$ is triangle free, then 
$Det(L_0) = Det(L_1)$ and since $Det(L^2)=Det(L_0) Det(L_1)=Det(D^2)$
we have $Det(D) = Det(L_0)$ counts the number of rooted trees in G
and $Det(D)/|V| =1$ counts the number of trees in $G$. 
In general, in the contractible case, $Det(D)/|V|$ is a square. 

\paragraph{}
Remark. For practical reasons, we
always assume that a sphere has the property that removing one point makes it
contractible. (Unlike ``homotopic to 1`` which is an NP complete task,
``contractible" can be checked fast. Inductively, a graph is called {\bf contractible},
if there exists a vertex $x$ such that $S(x)$ and $G-x$ are both contractible.
The induction assumption is that $K_1=1$ is contractible.)

\paragraph{}
In general, if $G$ has a non-trivial cohomology or even non-trivial homotopy 
groups, things
get more complicated. Analytic torsion $A(G)$ is not invariant under
Barycentric refinement, nor invariant under homotopy deformations even if rescaled.
The first case to look at are deformed spheres. We see there that the formula
for spheres has to be modified. Only the volume of the original
underlying d-sphere $H$ matters. It is somehow a volume of a cohomology class.

\paragraph{}
For a homotopy deformed $d$-sphere $G$ coming from an actual $d$-sphere $H$,
the volume of $H$ matters. Here are small dimensional examples:
For a homotopy deformed circle $G$ coming from a sphere $H=C_n$
that $A(G)/(|V(G)|*|E(H)| = 1$.
But this formula is already false if we add additional triangles.

\paragraph{}
In the $2$-dimensional case, we have invariance if we add lower
dimensional parts. But adding three dimensional part directly does not
work.  For a homotopy deformed 2-sphere $G$ coming from a sphere $H$
that $A(G)/(|V(G)|/|H|)=1$.

\paragraph{}
We experimented also with deformations of 3-spheres:
we see that for a homotopy deformed 3-sphere $G$ coming from a 3-sphere $H$
we have $A(G)/(V(G) |H|) = 1$. But only as long as the deformed part intersects
in lower dimensional parts.

\begin{figure}[!htpb]
\scalebox{0.8}{\includegraphics{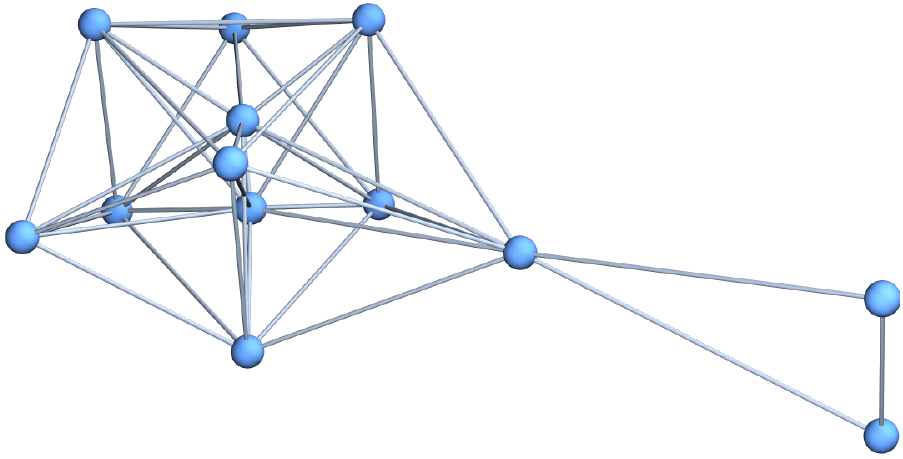}}
\label{Figure 1}
\caption{
A homotopy deformed 3-sphere $G$ which is obtained from
3-sphere $H$ with volume $|H|=28$. We have $A(G)=|V(G)| |H|$.
}
\end{figure}

\paragraph{}
How do we get in general torsion $A(G)$ from combinatorial data of $G$?
In general this is not yet solved.  Data can include the {\bf f-vector} 
as well as the lengths of minimal non-contractible spheres embedded in $G$. 
The following computation illustrates the effect of the fundamental group:
if $G$ has no triangles and is homotopic to a bouquet of spheres 
$H$ with $n$ loops, then $A(G)=|V(G)| |E(H)|^n/(n+1)$. 

\paragraph{}
We have tried to get relations for various products or sums or graph operations. 
We are also in interested 
in how $A(G)$ depends on algebraic operations. An easy case is  when
$G,H$ are disjoint union $G+H$ of graphs. In that case
$$   A(G + H) = A(G) A(H) \; . $$

\paragraph{}
We have seen that for some triangulations of the 2-torus, we have
$A(G) = 1/6$ but that for others, it is not.

\paragraph{}
Interestingly, there is also the following observation which we can 
not prove yet for the {\bf Shannon product} $C_n * C_m$ of two circles. 

\begin{conjecture}
$A(C_n * C_m) = 1/9$ independent of $n,m$. 
\end{conjecture}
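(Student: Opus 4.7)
The plan is to exploit the abelian translation symmetry $\Gamma = \mathbb{Z}/n \times \mathbb{Z}/m$, which acts freely on $C_n * C_m$ by simplicial automorphisms preserving the Whitney complex. For $n,m \ge 4$, the $\Gamma$-orbits of simplices are easy to enumerate: one orbit of vertices, four orbits of edges (horizontal, vertical, two diagonals), four orbits of triangles (the four faces of each $2\times 2$ $K_4$-block inside the king graph), and one orbit of tetrahedra (the $2 \times 2$ blocks themselves). Hence the $f$-vector is $(N,4N,4N,N)$ with $N = nm$, and the cochain complex decomposes as an orthogonal direct sum over characters $\chi \in \widehat{\Gamma}$ of small fiber complexes $\mathbb{C} \to \mathbb{C}^4 \to \mathbb{C}^4 \to \mathbb{C}$.

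Because ${\rm SDet}$ distributes across orthogonal direct sums (as it does for $A(G+H)=A(G)A(H)$ on disjoint unions), one gets $A(C_n * C_m)=\prod_{\chi} A(\chi)$, reducing the problem to a mode-by-mode calculation. For each character $\chi=(a,b)$ the matrix entries of the fiber maps $d_k(\chi)$ are monomials in $\zeta = e^{2\pi i a/n}$ and $\eta = e^{2\pi i b/m}$, read off from the signed incidence table of a single fundamental $2 \times 2$ block.

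I would first show $A(\chi)=1$ for every nontrivial character $\chi\ne 1$. Since all cohomology of $C_n * C_m$ (Betti vector $(1,2,1,0)$ of the underlying $2$-torus) is concentrated in the trivial mode, the fiber complex at $\chi\ne 1$ is acyclic, and $A(\chi)$ becomes the Reidemeister--Franz torsion of a four-step acyclic complex over $\mathbb{C}$. The natural way to pin down this torsion is to identify the fiber complex at $(\zeta,\eta)$ as a tensor product of the length-two fiber complexes attached to $C_n$ at $\zeta$ and $C_m$ at $\eta$; torsion is multiplicative under tensor products, and acyclicity of at least one factor (which holds as soon as $\zeta\ne 1$ or $\eta\ne 1$) forces $A(\chi)=1$. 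For the trivial character $\chi=1$, the boundary maps $d_0(1)$ and $d_2(1)$ vanish (translation invariance kills $0$- and top-coboundaries on a torus), while $d_1(1)\colon\mathbb{C}^4 \to \mathbb{C}^4$ is the signed boundary map sending each edge-orbit to the adjacent triangle-orbits within a single block. A direct $4\times 4$ computation of ${\rm Det}(D_1(1))={\rm Det}(d_1(1)^* d_1(1))$ and ${\rm Det}(D_2(1))={\rm Det}(d_1(1)d_1(1)^*)$, combined with McKean--Singer to control signs, yields the residual super determinant $1/9$, finishing the product.

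The main obstacle is the tensor-product decomposition used in the nontrivial-character step. The strong product is \emph{not} literally the Cartesian product of simplicial complexes (the $K_4$ blocks have no counterpart in a Cartesian product of two $1$-complexes), so it is not tautological that the fiber complex at $(\zeta,\eta)$ splits as a tensor product. One can hope that it nevertheless does, because after Fourier decomposition the local combinatorics of a $2 \times 2$ block splits cleanly into row and column data. If that tensor identity fails, the fallback is a direct algebraic verification that the four polynomial matrices $d_0(\chi),d_1(\chi),d_2(\chi)$ obey the super-determinant cancellation identity $\det(d_0(\chi))\det(d_2(\chi))/\det(d_1(\chi)) = 1$ up to unit phases; this is a bookkeeping check of manageable size on $4\times 4$ matrices, but it requires careful sign-tracking of the induced orientations on the four triangles of each $K_4$-block.
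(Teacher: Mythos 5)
The paper does not prove this statement at all: it is listed as a conjecture, supported only by machine computations (the $C_4*C_5$ example with its Hodge and Dirac determinant vectors), so there is no proof of the paper's to compare yours against; your attempt must stand on its own. Your setup is sound and consistent with the paper's data: the orbit bookkeeping $(N,4N,4N,N)$ with $N=nm$, the free $\mathbb{Z}/n\times\mathbb{Z}/m$ action, and the character-by-character factorization of ${\rm SDet}$ over the Fourier decomposition are all correct (for instance each fiber gives ${\rm Det}(d_2(\chi)^*d_2(\chi))=4$, reproducing ${\rm Det}(D_2)=4^{nm}=2^{40}$ for $C_4*C_5$). But the heart of the argument, $A(\chi)=1$ for every nontrivial character, is not established. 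The tensor-product identification you lean on is not merely ``not tautological''; it is false on dimension grounds: the character fibers of $C_n$ and $C_m$ are two-term complexes $\mathbb{C}\to\mathbb{C}$, whose tensor product has dimensions $(1,2,1)$, not the $(1,4,4,1)$ of the actual fiber. And acyclicity alone never forces torsion $1$: the torsion of an acyclic based complex is a genuine invariant (this is exactly the mechanism behind lens spaces), and homotopy equivalence of $C_n*C_m$ with the $2$-torus does not transfer the value, since Reidemeister torsion is only a simple-homotopy invariant and the Whitney complex of this king graph is not the product cell structure. Your fallback identity $\det(d_0(\chi))\det(d_2(\chi))/\det(d_1(\chi))=1$ does not even typecheck, as $d_0(\chi)$ is $4\times1$ and $d_2(\chi)$ is $1\times4$; what has to be proved is ${\rm Det}(d_0(\chi)^*d_0(\chi))\,{\rm Det}(d_2(\chi)^*d_2(\chi))={\rm Det}(d_1(\chi)^*d_1(\chi))$ for all $(\zeta,\eta)\neq(1,1)$, and that verification (or a conceptual substitute for it) is precisely the missing content of the conjecture.

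The trivial-character step is also incorrect as stated. While $d_0(1)=0$ is right, $d_2(1)$ does not vanish: every triangle of $C_n*C_m$ lies in exactly one tetrahedron and the four faces of a $K_4$-block lie in the four distinct triangle orbits, so $d_2(1)$ is a vector of four signs $\pm1$ and ${\rm Det}(d_2(1)^*d_2(1))=4$. Indeed, if $d_2(1)$ were zero the trivial fiber would carry a nonzero $H^3$, contradicting the Betti vector $(1,2,1,0)$ you yourself invoke, since with all nontrivial fibers acyclic the third Betti number of $G$ equals that of the trivial fiber. So the residual trivial-mode evaluation must be redone with ${\rm rank}(d_1(1))=2$ and $d_2(1)$ surjective; as written, your computation would not produce $1/9$. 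None of this makes the strategy hopeless --- the Fourier reduction plus an honest $4\times4$ sign-tracked verification at every character may well close the conjecture --- but both the nontrivial-character step and the trivial-mode evaluation are currently unproven or wrong.
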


\paragraph{}
For example, for $G=C_4 * C_5$, a graph with Euler characteristic $\chi(G)=0$
and Betti vector $(1,2,1)$ (it is homotopic to a 2-torus),
the Hodge determinants are
$(w_0,w_1,w_2,w_3)= ({\rm Det}(L_0),{\rm Det}(L_1),{\rm Det}(L_2),{\rm Det}(L_3))$,= \\
$(176084927365834800$,  \\
$306822144499476699689198835309067298521743360000$, \\
$1915862536237718536143850697507558601523200$, \\
$1099511627776)$. This leads to $A(G) = w_0^0 w_2^2/(w_1^1 w_3^3) = 1/9$.
The Dirac determinants are
$(v_0,v_1,v_2)=({\rm Det}(D_0),{\rm Det}(D_1), {\rm Det}(D_2)) =  \\
(176084927365834800$, $1742466826033449739173966643200$, $1099511627776)$.
This leads again to $A(G) = v_0 v_2/v_1 = 1/9$. 

\paragraph{}
For the homotopy cylinders $G = C_n * K_m$ we measure

\begin{conjecture}
$A(C_n * K_m) = n^2/m$.
\end{conjecture}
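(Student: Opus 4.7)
The plan is to exploit the free cyclic $\mathbb{Z}/n$-action rotating the $C_n$-factor of $G = C_n * K_m$; I will assume $n \ge 4$ throughout, since $C_3 * K_m = K_{3m}$ and the stated formula must be interpreted separately there ($A(K_{3m}) = 3m$, not $9/m$). For $n \ge 4$ the Whitney complex of $G$ consists of $n$ maximal simplices $K_{2m}$, one per edge of $C_n$, glued pairwise along $K_m$-faces at shared vertices of $C_n$. This gives $f_k(G) = n[\binom{2m}{k+1} - \binom{m}{k+1}]$ and $\chi(G) = 0$, consistent with $G \simeq S^1$.

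By the key lemma, it suffices to compute $A(G) = \mathrm{SDet}(D) = \prod_k \mathrm{Det}(D_k)^{(-1)^k}$. I would block-diagonalize $D$ by Fourier analysis of the cyclic action: since $\mathbb{Z}/n$ acts freely on simplices, every chain space $C_k$ decomposes as $\bigoplus_{j=0}^{n-1} V_k^{(j)}$, where $V_k^{(j)}$ is the $\zeta^j$-isotypic summand of complex dimension $\binom{2m}{k+1} - \binom{m}{k+1}$ and $\zeta = e^{2\pi i/n}$. The Dirac blocks decompose accordingly, yielding $\mathrm{SDet}(D) = \prod_{j=0}^{n-1} \mathrm{SDet}(D^{(j)})$, and each factor can be computed on a single "fundamental slab" $K_{2m}$ with the two $K_m$-faces identified via multiplication by $\zeta^j$.

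For the trivial character $j = 0$ the invariant subcomplex should compute the cellular torsion of the quotient orbifold and reduce to $A(C_n) = n^2$, matching the $C_n$-like cohomology of $G$. For each non-trivial character $j \ne 0$, the twisted complex $V_*^{(j)}$ is acyclic (since $\pi_1(G) \cong \mathbb{Z}$ and the twisted cohomology of $S^1$ with non-trivial character vanishes), so each $\mathrm{SDet}(D^{(j)}) = c_j$ is an honest Reidemeister torsion that can be made explicit via the telescoping identity $\mathrm{Det}(L_k) = \mathrm{Det}(D_k)\mathrm{Det}(D_{k-1})$ within the twisted slab. The formula $A(G) = n^2/m$ then reduces to the identity $\prod_{j=1}^{n-1} c_j = m$.

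The main obstacle is proving $\prod_{j=1}^{n-1} c_j = m$. The cleanest interpretation is that this product equals the super pseudo determinant of a Dirac operator on a single slice $K_m$ coupled to the non-trivial sub-representation $\mathbb{C}[\mathbb{Z}/n]/\mathbb{C}$ of the regular representation; by character orthogonality together with the main theorem applied to the contractible graph $K_m$, this should evaluate to $A(K_m) = m$. The delicate points are sign conventions across the Fourier cut and tracking the straddling simplex contributions, which couple the two slice boundaries of a slab and thus prevent a naive tensor-product splitting. A final subtlety is handling the pseudo determinant in the $j = 0$ block, where the twisted Laplacian is singular with cokernel identified with $H^*(G) \cong H^*(C_n)$; the passage from $\mathrm{Det}$ to the classical determinant on the non-trivial characters must be done carefully so that the final product is positive and rational, matching the experimentally observed value $n^2/m$.
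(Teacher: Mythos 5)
You should first note that the paper itself offers no proof of this statement: it is recorded there purely as an experimentally observed conjecture, so you are not deviating from a known argument but attempting a new one. What you have, however, is a program rather than a proof. The two evaluations that carry all the content --- the value of the trivial-character block and the product $\prod_{j\neq 0}c_j$ over nontrivial characters --- are only asserted (``should compute'', ``should evaluate to''), and the points you yourself flag (equivariant sign conventions, the simplices of a slab $K_{2m}$ that straddle both $K_m$-slices and prevent a tensor splitting) are exactly where the difficulty sits. The appeal to ``character orthogonality together with $A(K_m)=m$'' cannot close this: the nontrivial-character blocks are twisted Dirac complexes of the slab with its two ends identified, not Dirac complexes of $K_m$, so the main theorem for contractible graphs does not apply to them.

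There is also a concrete structural error in the bookkeeping. In an orthonormal basis of orbit sums, the $j=0$ (invariant) block of $D$ is the specialization $t=1$ of the matrix of the quotient complex (one slab $K_{2m}$ with its two $K_m$-faces identified); for $n\ge 4$ there are no wrap-around incidences, so this block is literally independent of $n$, and its super pseudo determinant cannot equal $A(C_n)=n^2$ as a function of $n$. The baseline $m=1$ case shows the correct assignment: for $C_n$ itself the $j=0$ twisted complex is the zero map, contributing $1$ (an empty product of nonzero eigenvalues), while $\prod_{j\neq 0}|1-\zeta^j|^2=n^2$ supplies all of $A(C_n)$. So in your decomposition the entire $n$-dependence must come from the nontrivial characters, and the $j=0$ block must supply the $n$-independent factor involving $m$; note also that your stated reduction $\prod_{j\neq 0}c_j=m$ is inconsistent with your own assignment, since $n^2\cdot m\neq n^2/m$. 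The Fourier strategy along the free $\mathbb{Z}/n$-action is a reasonable route (the decomposition $\mathrm{SDet}(D)=\prod_j\mathrm{SDet}(D^{(j)})$ is legitimate for pseudo determinants, and the acyclicity of the twisted blocks for $j\neq 0$ is correct), but until the per-character determinants of the slab complex are actually computed, the conjecture remains open.
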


\paragraph{}
But things get more complicated even if $H$ is contractible. If $W_m$ is the 
wheel graph with $m$ vertices, then we see  \\
$A(C_4 * W_5) = 16*5/21$, $A(C_4 * W_6) = 16*6/26$, $A(C_4 * W_7) = 16*7/31$\\
$A(C_5 * W_5) = 25*5/21$, $A(C_5 * W_6) = 25*6/26$, $A(C_5 * W_7) = 25*7/31$ \\
suggesting

\begin{conjecture}
$A(C_n * W_m) = n^2 m/(4m+1)$.
\end{conjecture}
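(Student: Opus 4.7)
The plan is to combine the key lemma $A(G)={\rm SDet}(D)=\prod_{k} {\rm Det}(D_k)^{(-1)^{k+1}}$ with a tensor-product structure on the chain complex of the Shannon product $C_n * W_m$, then reduce the computation to invariants of $C_n$ and $W_m$ separately. First I would verify that the simplicial chain complex of $C_n * W_m$ is, up to sign conventions, the tensor product of the chain complexes of $C_n$ and $W_m$: each Dirac block $D_k(C_n*W_m)$ splits over the bigrading $i+j=k$, with $(i,j)$-block built from $D_i(C_n)$ and $D_j(W_m)$ via Kronecker expressions $d_i \otimes I \pm I \otimes d_j$.

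Second, I would compute ${\rm Det}(D_k(C_n*W_m))$ by diagonalizing each bigraded block. If $D_i(C_n)$ has nonzero eigenvalues $\{\mu_\alpha\}$ and $D_j(W_m)$ has nonzero eigenvalues $\{\nu_\beta\}$, the nonzero spectrum of the product block is generated by sums $\mu_\alpha + \nu_\beta$. Forming the super product $\prod_k {\rm Det}(D_k)^{(-1)^{k+1}}$ telescopes over the bigrading, and most factors cancel. What survives is a combination of $A(C_n)$, $A(W_m)$, and correction terms produced by pseudo (as opposed to ordinary) determinants.

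Third, I would identify the correction. Since $W_m$ is contractible its Dirac operator has a single zero mode, while $C_n$ has Betti vector $(1,1)$ so its Dirac operator has two zero modes. Tensoring these kernels with nonzero modes of the other factor produces additional zero directions which the pseudo determinant discards. Tracking how many survive in each bigrading yields a combinatorial factor. My expectation is that this factor is the ratio $(4m+1)/m$, where $4m+1 = f_0(W_m)+f_1(W_m)+f_2(W_m)$ is the total number of simplices in $W_m$ (taking $W_m$ to be a cone over $C_m$) and $m = f_2(W_m)$ is the number of triangles, so that the final formula becomes $A(C_n)\cdot m/(4m+1) = n^2 m/(4m+1)$.

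The main obstacle will be the precise bookkeeping of pseudo determinant corrections across the tensor decomposition. For ordinary determinants the tensor-product formula for SDet is clean and gives $A(G*H) = A(G)^{\chi(H)} A(H)^{\chi(G)}$, which for $\chi(C_n)=0$ and $\chi(W_m)=1$ would give just $A(C_n) = n^2$ and miss the entire factor $m/(4m+1)$. That factor comes from the mismatch between ordinary and pseudo determinants in the presence of the harmonic $1$-form of $C_n$. Making this rigorous requires a dedicated lemma identifying the kernel of each bigraded Dirac block in terms of the harmonic spaces of $C_n$ and $W_m$, and comparing ranks carefully. A secondary approach via the super matrix tree theorem, interpreting $A(C_n * W_m)$ as a ratio of rooted spanning tree counts in the even and odd simplex graphs $\mathcal{B}$ and $\mathcal{F}$ of the product, gives a useful independent check; however, direct enumeration in the Shannon product does not simplify enough to replace the spectral route, so I would treat the tensor-spectral computation as primary.
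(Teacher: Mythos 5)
First, note that the paper does not prove this statement at all: it is recorded as a conjecture supported only by the listed numerical values $A(C_4*W_5)=16\cdot 5/21$, etc., so there is no proof of the paper to compare yours against. Judged on its own, your proposal has a fatal structural gap at the very first step. The Whitney (clique) complex of the Shannon (strong) product $C_n * W_m$ is \emph{not} the tensor product of the chain complexes of the factors. In the strong product a set of vertices is a clique exactly when both projections are cliques, so the simplices are all subsets of products of cliques; the clique number multiplies rather than the dimension adding. Concretely, $C_n * C_m$ already contains $K_4$'s (the paper itself remarks that this graph ``is three dimensional in nature''), and $C_n * W_m$ contains $K_6$'s, i.e.\ $5$-dimensional simplices, whereas a tensor product of a $1$-dimensional and a $2$-dimensional complex would stop at dimension $3$. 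Hence the asserted bigraded splitting $D_k=\bigoplus_{i+j=k}(\cdots)$ with blocks built from $d_i\otimes I\pm I\otimes d_j$ simply does not exist for the Dirac blocks of the actual complex, and the subsequent spectral telescoping, the sum-of-eigenvalues description, and the K\"unneth-type product formula $A(G*H)=A(G)^{\chi(H)}A(H)^{\chi(G)}$ all have nothing to act on. You also cannot rescue the argument by passing to a homotopy-equivalent product cell structure, because the paper emphasizes (and your own target formula shows) that $A$ is not a homotopy invariant: it depends on the specific complex, so replacing the Whitney complex of $C_n*W_m$ by a smaller product complex changes the value.

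The remaining steps inherit this problem. The ``correction factor'' $(4m+1)/m$ is identified only by matching numerology ($4m+1$ is the total simplex count of the cone over $C_m$ and $m$ its triangle count), not derived from any kernel bookkeeping; and the promised lemma identifying kernels of bigraded blocks cannot even be formulated, since those blocks are not present in $D(C_n*W_m)$. If you want a viable route, you must work with the actual clique complex of the strong product: for instance, analyze the simplices of $C_n*W_m$ directly (they are subsets of products of an edge or vertex of $C_n$ with a triangle, edge or vertex of $W_m$), and attempt the super matrix tree / duality strategy of the paper on the graphs $\mathcal{B}$ and $\mathcal{F}$ of that complex, or set up an induction in $n$ by cutting the product along a fiber $\{v\}*W_m$, which is contractible, and tracking how the even and odd tree counts change. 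As it stands, your argument establishes nothing beyond the (already known) experimental plausibility of the formula.
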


If $L_n$ is the linear graph with $n$ vertices, we see

\begin{conjecture}
$A(C_n * L_n) = n^2 m/(3m-2)$. 
\end{conjecture}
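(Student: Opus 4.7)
The plan is to exploit the cyclic $\mathbb{Z}/n$-rotation symmetry of $C_n * L_m$ to reduce the computation of $A$ to a product over Fourier modes on $C_n$ and then to a transfer-matrix problem on $L_m$. First I would unpack the simplicial structure of the strong product: cliques of $C_n \boxtimes L_m$ are boxes $A \times B$ with $A, B$ cliques in the factors, giving $f_0 = nm$, $f_1 = n(4m-3)$, $f_2 = 4n(m-1)$, $f_3 = n(m-1)$; in particular every $(\text{edge}) \times (\text{edge})$ rung is a $K_4$ and the complex is $3$-dimensional. One checks $\chi = 0$ and Betti vector $(1,1,0,0)$, so the Key Lemma gives $A(G) = \text{Det}(D_0)\text{Det}(D_2)/[\text{Det}(D_1)\text{Det}(D_3)]$, and this is the quantity to evaluate.

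Next I would use the free rotation action on each simplex set to write $D_k = \bigoplus_{j=0}^{n-1} D_k^{(j)}$, where each block $D_k^{(j)}$ has size $f_k/n$ and depends on the character $\omega_j = e^{2\pi i j/n}$. With appropriate attention to kernels, the pseudo-determinants factor, and therefore $A(G) = \prod_{j=0}^{n-1} A^{(j)}$, where $A^{(j)} = \text{Det}(D_0^{(j)})\text{Det}(D_2^{(j)})/[\text{Det}(D_1^{(j)})\text{Det}(D_3^{(j)})]$ is the super-determinant of the twisted Dirac complex living over $L_m$. Since all cohomology of $G$ lies in the trivial representation, the twisted complexes at $j \neq 0$ are acyclic, and $A^{(j)}$ for $j \neq 0$ is a genuine twisted torsion on the path $L_m$.

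The path structure of $L_m$ now lets me compute $A^{(j)}$ by a transfer-matrix recursion in $m$. Each internal rung corresponds to a $K_4$, and gluing a rung corresponds to multiplying the twisted complex by a fixed transition parameterized by $\omega_j$. The resulting three-term recursion should give $A^{(j)}$ as an explicit rational function in $\omega_j$ whose numerator and denominator are polynomials in $m$. Two consistency checks anchor the calculation: at $m=1$ one must recover $A(C_n) = n^2$ via the classical identity $\prod_{j=1}^{n-1} 4\sin^2(\pi j/n) = n^2$, and at $m=2$ the result must match the separate conjecture $A(C_n * K_2) = n^2/2$. Combining the cyclotomic product over $j \neq 0$ with the $j=0$ contribution should collapse to $n^2 \cdot m/(3m-2)$.

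The main obstacle is twofold. First, writing down the twisted boundary maps on the $K_4$-rung and showing that the transfer matrix has the clean closed form needed; the denominator $3m-2$ suggests a Sylvester or resultant identity governing the cancellation between edges, triangles, and tetrahedra along a rung. Second, the zero mode: $D_0^{(0)}$ and $D_1^{(0)}$ each carry a one-dimensional kernel that must be removed with pseudo-determinant conventions consistent across blocks. If these two points are handled, the remaining calculation is a cyclotomic evaluation; a fallback would be to prove the formula by induction on $m$ using the rung-gluing, once one has established a multiplicative recursion for $A$ under the attachment of a $C_n$-cylinder, anchored by the $m=1,2$ base cases.
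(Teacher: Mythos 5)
You should first note that the statement you are proving is stated in the paper only as a \emph{conjecture} (with the typo $L_n$ for $L_m$), supported by numerical experiments; the paper offers no proof, so your argument has to stand entirely on its own. As it stands it does not: it is a strategy outline whose decisive steps are exactly the ones left undone. The set-up is fine --- the $f$-vector $(nm,\,n(4m-3),\,4n(m-1),\,n(m-1))$, $\chi=0$, Betti vector $(1,1,0,0)$, the reduction via the Key Lemma to $A=\mathrm{Det}(D_0)\mathrm{Det}(D_2)/\mathrm{Det}(D_1)$ (note $d_3=0$, so your $\mathrm{Det}(D_3)$ factor is vacuous and equal to $1$ under the pseudo-determinant convention), and the Fourier decomposition $A=\prod_{j}A^{(j)}$ using the free $\mathbb{Z}/n$-action (with an equivariant choice of orientations) together with acyclicity of the $j\neq 0$ sectors. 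But the content of the conjecture is concentrated in the two sentences where you write that the transfer-matrix recursion ``should give'' $A^{(j)}$ in closed form and that the cyclotomic product ``should collapse'' to $n^2m/(3m-2)$. Observe what the target formula forces: since $n^2m/(3m-2)$ carries no $n$-dependence beyond the factor $n^2$, your scheme essentially requires $A^{(j)}=2-2\cos(2\pi j/n)$ \emph{exactly}, independent of $m$, for every $j\neq 0$, and $A^{(0)}=m/(3m-2)$ for the invariant sector. Neither identity is established, and you cannot import a continuum-style product formula such as $T(M_1\times M_2)=T(M_1)^{\chi(M_2)}$, because the paper stresses that discrete torsion is not a homotopy invariant and is not even stable under Barycentric refinement; so the $m$-independence of the twisted sectors is a genuine theorem you would have to prove, presumably by exhibiting the rung transfer matrix explicitly and showing the telescoping you only hypothesize.

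Two further points. Your consistency check at $m=2$ anchors the computation to $A(C_n * K_2)=n^2/2$, which is itself only another unproved conjecture of the paper, so it cannot carry any logical weight (the $m=1$ check against $A(C_n)=n^2$ is fine). And your description of the simplicial structure is loose: the cliques of $C_n * L_m$ are not only the boxes $A\times B$ but all nonempty subsets of such boxes (e.g.\ the diagonals of each $K_4$ rung); your $f$-vector counts are nevertheless the correct Whitney-complex counts, so this is a matter of wording, but the twisted boundary maps you would need for the transfer matrix must be written on the full complex, not on boxes only. Until the closed form for $A^{(j)}$ and the evaluation of $\prod_{j\neq 0}A^{(j)}$ are actually carried out, the proposal remains a plausible plan rather than a proof.
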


\paragraph{}
{\bf Cayley's formula} tells that the number ${\rm Det}(L_0)$ of rooted $1$-dimensional 
trees in a complete graph satisfies ${\rm Det}(L_0)=n^{n-1}$. More generally, we have

\begin{lemma}[Generalized Cayley tree formula]
For the Hodge blocks: ${\rm det}(L_k(K_n))=n^{B(n,k+1)}$ for $k>0$ and $n^{n-1}$ for $k=0$. \\
For the Dirac blocks: ${\rm det}(D_k(K_n))=n^{B(n-1,k+1)}$ for all $k$. 
\end{lemma}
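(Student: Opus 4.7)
The plan is to exploit that $K_n$ is the full simplex on $n$ vertices and hence its \emph{augmented} chain complex (with $C_{-1} = \mathbb{R}$ and augmentation $d_{-1} \colon c \mapsto c\mathbf{1}$) is acyclic. On this augmented complex I would show that the Hodge Laplacian is simply $n\,I$ on every $V_k = \mathbb{R}^{\binom{n}{k+1}}$; all the determinant formulas then follow from this single scalar identity together with Pascal's rule.

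First I would prove the key identity
\[
\tilde{L}_k \;=\; d_k^* d_k + d_{k-1} d_{k-1}^* \;=\; n\,I \qquad \text{on } V_k, \quad k \geq 0,
\]
by a direct computation on the basis vectors $e_F$ indexed by $(k{+}1)$-subsets $F \subset [n]$. The diagonal entry is $(n-k-1) + (k+1) = n$, counting the cofaces and faces of $F$ in $K_n$. For $F' \neq F$ with $|F \cap F'| = k$, set $H = F \cup F'$ (the unique $(k{+}2)$-set containing both) and $F'' = F \cap F'$ (the unique $k$-set contained in both); then exactly one term contributes from each piece, giving
\[
(\tilde{L}_k e_F)(F') \;=\; [H{:}F]\,[H{:}F'] \;+\; [F{:}F''][F'{:}F''] .
\]
The relation $d \circ d = 0$ applied to $e_{F''}$ yields $[H{:}F][F{:}F''] = -[H{:}F'][F'{:}F'']$, and rearranging immediately gives $[H{:}F][H{:}F'] + [F{:}F''][F'{:}F''] = 0$. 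For $|F \cap F'| < k$ both sums are empty. Hence $\tilde{L}_k = nI$.

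Second, I would convert to the unaugmented Hodge blocks $L_k$ used in the paper. For $k \geq 1$ the augmentation term is absent and $L_k = \tilde L_k = nI$, so $\mathrm{Det}(L_k(K_n)) = n^{f_k} = n^{\binom{n}{k+1}}$. For $k = 0$, $L_0 = \tilde L_0 - d_{-1} d_{-1}^* = nI - J$ with $J = \mathbf{1}\mathbf{1}^{T}$, whose only null vector is $\mathbf{1}$, so the pseudo determinant equals $n^{n-1}$; this recovers Cayley's formula. Together these prove the Hodge-block half of the claim.

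Finally, I would deduce the Dirac block formula from the telescoping identity $\mathrm{Det}(L_k) = \mathrm{Det}(D_k)\,\mathrm{Det}(D_{k-1})$ established earlier in the paper. Starting from $\mathrm{Det}(D_0) = \mathrm{Det}(L_0) = n^{n-1} = n^{B(n-1,1)}$, induction on $k$ gives
\[
\mathrm{Det}(D_k(K_n)) \;=\; \frac{\mathrm{Det}(L_k)}{\mathrm{Det}(D_{k-1})} \;=\; n^{\binom{n}{k+1}-\binom{n-1}{k}} \;=\; n^{\binom{n-1}{k+1}},
\]
the last step being Pascal's identity $\binom{n}{k+1} = \binom{n-1}{k+1} + \binom{n-1}{k}$. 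This matches $n^{B(n-1,k+1)}$. The main obstacle is the sign cancellation in the first step; everything after that is routine bookkeeping once one has the scalar identity $\tilde L_k = nI$ on the augmented simplex together with Pascal.
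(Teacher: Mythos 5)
Your proposal is correct, and it is worth pointing out that the paper itself does not actually prove this lemma: it is only stated, backed by a commented-out numerical check, and then used to derive $A(K_n)=n$. So your argument supplies a genuine proof where the paper offers none. The core identity you use --- that the \emph{augmented} Hodge Laplacian of the full simplex is $n\,I$ on $k$-forms --- is verified correctly: the diagonal count $(n-k-1)+(k+1)=n$ is right, the off-diagonal cancellation for $|F\cap F'|=k$ follows exactly as you say from the $(H,F'')$-entry of $d\circ d=0$ (the only intermediate faces being $F$ and $F'$), and both contributions vanish when $|F\cap F'|<k$. The passage to the unaugmented blocks is also clean: $L_k=nI$ for $k\geq 1$ gives ${\rm Det}(L_k)=n^{\binom{n}{k+1}}$, while $L_0=nI-J$ has spectrum $\{0,n^{(n-1)}\}$ and recovers Cayley's $n^{n-1}$. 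Your descent to the Dirac blocks via ${\rm Det}(L_k)={\rm Det}(D_k)\,{\rm Det}(D_{k-1})$ and Pascal's rule is consistent with the paper's own key identity, so you are not importing anything the paper does not already establish; alternatively, you could bypass the induction entirely by noting that $\tilde L_k=nI$ forces every nonzero eigenvalue of $D_k=d_k^*d_k$ to be $n$, and acyclicity of the augmented complex gives ${\rm rank}(d_k)=\binom{n-1}{k+1}$ by telescoping ranks, which yields ${\rm Det}(D_k)=n^{\binom{n-1}{k+1}}$ directly. One boundary case deserves a sentence in a final write-up: for $k=n-1$ one has $d_{n-1}=0$, so $D_{n-1}$ is the zero matrix and its pseudo determinant is the empty product $1=n^{\binom{n-1}{n}}$, which is consistent with the claimed formula and with your induction step; this is a check, not a gap.
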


This immediately implies: 

\begin{coro}
$A(K_n)=n$
\end{coro}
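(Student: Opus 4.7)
The plan is to combine the Key Lemma, which realizes $A(G)$ as the super pseudo determinant of the Dirac blocks, with the just-established generalized Cayley tree formula to reduce the computation to a single alternating binomial sum.

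First I would rewrite the Key Lemma in the form most convenient here. Using $\mathrm{Det}(L_k) = \mathrm{Det}(D_k)\,\mathrm{Det}(D_{k-1})$ and telescoping the exponent $k(-1)^{k+1}$ in the Hodge-block definition of torsion, one gets $A(G) = \prod_{k} \mathrm{Det}(D_k)^{(-1)^k}$, so for the complete graph
$$A(K_n) = \prod_{k=0}^{n-2} \mathrm{Det}(D_k(K_n))^{(-1)^k}.$$
This rewrite is essentially what the paper's ``Key Lemma'' paragraphs already sketch, and it is what makes the binomial formulas for $D_k(K_n)$ directly usable.

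Next I would substitute the generalized Cayley identity $\mathrm{Det}(D_k(K_n)) = n^{\binom{n-1}{k+1}}$ stated in the preceding lemma. The product collapses to a single power of $n$:
$$A(K_n) = n^{E_n}, \qquad E_n = \sum_{k=0}^{n-2} (-1)^k \binom{n-1}{k+1}.$$
After reindexing with $j = k+1$, the exponent becomes $E_n = -\sum_{j=1}^{n-1} (-1)^j \binom{n-1}{j}$, and the standard identity $\sum_{j=0}^{n-1}(-1)^j\binom{n-1}{j}=0$ (valid for $n\ge 2$) turns this into $E_n = 1$. The case $n=1$ is trivial since $K_1$ is a single vertex. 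This gives $A(K_n)=n$.

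The only step that is not entirely bookkeeping is the passage from the $L_k$-definition to the $D_k$-product form of $A$; once that is in hand, everything reduces to a one-line alternating-binomial identity. I would expect the main obstacle in a fully rigorous write-up to be verifying that the generalized Cayley formula is being applied uniformly across all $k\in\{0,\dots,n-2\}$, in particular that the $k=0$ Dirac block determinant $\mathrm{Det}(D_0(K_n)) = n^{n-2}$ is consistent with the Hodge exponent $\mathrm{Det}(L_0(K_n)) = n^{n-1}$ (which recovers the classical Cayley count $n^{n-2}$ of spanning trees, via $\mathrm{Det}(L_0)=|V|\,\tau(G)$). This consistency is a sanity check rather than a real difficulty, so the corollary really is an immediate consequence of the preceding lemma.
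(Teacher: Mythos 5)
Your proof is correct and follows essentially the same route as the paper: its own proof likewise substitutes the Dirac-block Cayley formula $\mathrm{Det}(D_k(K_n))=n^{\binom{n-1}{k+1}}$ into $A(K_n)=\prod_k \mathrm{Det}(D_k(K_n))^{(-1)^k}$ and evaluates the same alternating binomial sum, mentioning the Hodge-block route (differentiating $(1+x)^n-1$ at $x=-1$) only as an alternative. One small slip in your closing sanity check: since $D_0=d_0^*d_0=L_0$, one has $\mathrm{Det}(D_0(K_n))=n^{n-1}$ (the rooted spanning-tree count), not $n^{n-2}$, which is the unrooted count $\mathrm{Det}(L_0)/n$; this does not affect your main computation, which applied the binomial formula uniformly and correctly.
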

\begin{proof}
We can see this for the Hodge blocks
$$ A(K_n) = \prod_{k=0}^{n} {\rm Det}(L_k(K_n))^{k (-1)^k} $$
The identity could be seen by 
differentiating $\sum_{k=1}^n  B(n,k) x^k = (1+x)^n-1$ and setting $x=-1$.
Easier is to see it for the Dirac blocks
$$ A(K_n) = \prod_{k=0}^{n} {\rm Det}(D_k(K_n))^{(-1)^k} $$
which is $n^{\sum_{k=1}^n B(n-1,k) (-1)^k} = n^1$
\end{proof} 

\paragraph{}
In order to prove that $A(G)=|V(G)|$ is a valuation in the contractible case. 
we want to interpret $A(G)/|V(G)|$ is a geometric quantity which 
satisfies the {\bf counting property}:
$A(G \cup H) = A(G) + A(H) - A(G \cap H)$. 

\paragraph{}
Every finite simple graph $G$ defines its Whitney complex $W(G)$ and every
finite abstract simplicial complex $W$ defines the incidence graph $G(W)$
and $W \circ G$ or $G \circ W$ are
Barycentric refinements on the category of graphs or complexes.
We insist to remain in a combinatorial setting and chose
the language of graphs as this is much more intuitive.
This also follows early topologists like Whitney, Alexandroff or Hopf,
we like to think in terms of graphs (Gitterger\"uste) rather than
finite set of sets or geometric realizations. The later leaves
combinatorics and requires astronger axiom system ZFC.
When doing finite combinatorics, we do not need the infinity axiom.
It is also pedagogically simpler as what we do here is accessible
to anybody who has seen matrices, eigenvalues and determinants in
linear algebra. Simplicial complexes, CW complexes require more mathematical
maturity. 

\paragraph{}
Our geometric point of view is to see a graph $G$ as a geometric model
of a continuum like a compact Riemannian manifold $M$. 
The graph $G$ naturally comes with a geodesic metric and 
recovers differential geometric notions like tensors. Fundamentally, if we
look at space, we only can observe a finite set $V$ of points as well as relations
between these points given by an equivalence relation that if two points are
indistinguishable with a given accuracy. This define
$G$ and the metric. If space $M$ is a compact Riemannian manifold and 
the number of points is finite but $h$-dense with respect to some fixed
non-standard small $h>0$ in an axiomatic framework like Nelson's ZFC+IST or
ZF+SPOT, we can recover the Riemannian metric from the geodesic graph metric.

\paragraph{}
Not having any additional structure at first produces 
combinatorial problems which are not obscured by a particular choice of metric. 
The hope of course is always that some functional has interesting 
maxima or minima which somehow relate to physics. Natural functionals are 
Euler characteristic, Wu characteristic, average simplex 
cardinality \cite{AverageSimplexCardinality}, 
characteristic length \cite{KnillFunctional}. Related to the pseudo determinant
of the Dirac operator is torsion which is a super pseudo determinant of the
Dirac operator. 

\paragraph{}
One of our initial motivations was the question of 
relating the spectrum of the Hodge Laplacian $L$ of a graph $G$
with the combinatorial data. A good starting point is the case of spheres,
where we have Dehn-Sommerville relations involving the f-vector. 
Because of McKean-Singer, a natural quantity is the {\bf super determinant}
of Hodge operators. What is the geometric meaning if we weight the $k$'th term. 

\paragraph{}
When looking at the definition of analytic torsion, there is the strange
power of $k$ in the $k$'th term. This disappears if one moves from 
the {\bf Hodge blocks} $L_k = F_k^* F_k$ to the {\bf Dirac blocks} 
$D_k= d_k^* d_k$. Analytic torsion becomes so a very natural quantity
as it is just the {\bf super determinant of the Dirac operator} $D$ or the 
{\bf square of the super determinant of chain complex} defined by $d$. 

\paragraph{}
We made our first experiments with ${\rm Det}(D)$ when writing 
\cite{CauchyBinetKnill} and experimented with analytic torsion in the summer of 2015 and
saw then relations like $A(G)=|V|$ for contractible graphs or $A(C_n)=n^2$. Being
unable to prove the contractible case beyond the complete graph $K_n$, 
we moved on to other projects and only returned to it in December 2021. We realized
the proof of the 2-sphere case on December 25, 2021.
We learned about the linear algebra relating the $L_k$ 
with $D_k$ from \cite{Bunke2015} only on January 4th, 2022. 
We found then also the appendix of \cite{GelfandKapranovZwlevinsky},
which introduces the {\bf determinant of a chain complex}. 

\paragraph{}
The linear algebra switching from {\bf Hodge blocks} $L_k$ to {\bf Dirac blocks} 
$D_k$ is extremely important as it makes it clear {\bf why}
analytic torsion is such a natural quantity. We defined it as
the square of the determinant of a chain complex. Just because we like to work with 
rational numbers and not square roots, we continued to work with the 
squared analytic torsion. When doing experiments, we see for example
for the octahedron graph the torsion $A(G) = 3/4$. If we would do the experiments with 
the square root, we would see $\sqrt{A(G)} = \sqrt{3}/2$ and the connection 
to the $f$-vector $f_G = (6,12,8)$ would have been obscures. Taking the rational
numbers instead of the square roots looks like a small matter, but it was
essential when investigating the matter experimentally. 

\paragraph{}
As the above remarks have indicated, there are lots of open questions. 
We see that for discrete manifolds that 
analytic torsion also depends on topology. A good start for further
investigation is to see how torsion for $2$-manifolds depends on the 
structure of the manifold.
For homotopy tori $G$ obtained by taking the Shannon product $G=C_n * C_m$ of $C_n$ with $C_m$,
we always get $A(G) = 1/9$. The graph $G$ is three dimensional in nature with the same number 
of $n*m$ of vertices and tetrahedra $K_4$. 

\paragraph{}
Already Ray and Singer already suggested to study the analytic 
torsion for other differential complexes and not only the 
{\bf Euler complex}. We can look at it for 
{\bf Wu characteristic} $\omega(G) = \sum_{x \sim y} \omega(x)$, where
the sum is over all pairs $(x,y)$ of complete sub-graphs of $G$
which have a non-empty intersection and where $\omega(x) = (-1)^{\rm dim(x)}$.
Unlike Euler characteristic $\chi(G) = \sum_x \omega(x)$ which is a homotopy 
invariant, the Wu characteristic is not. For discrete manifolds with 
$(d-1)$-manifold boundary, it satisfies $\omega(G) = \chi(G)-\chi(\delta G)$. 

\paragraph{}
The second order analogue of the $f$-vector $f=(f_0,f_1, \dots f_d)$
is the $f$-matrix $f_{ij}$ matters which counts the number of 
intersections of $i$ simplices with $j$ simplices.  The second order
{\bf analytic torsion} is 
$$ A_2(G) = \prod_k {\rm Det}(L_k)^{k (-1)^{k+1}}  \; , $$
where $L_k$ are the blocks of the Laplacian $L=(d+d^*)^2$. 

We see that
for even dimensional spheres $A_2(G) = f_{00}/f_{dd}$ 
and for odd-dimensional spheres $A_2(G) = 1/(f_{00} f_{dd})$. 
We also see that for contractible graphs the situation is 
more subtle. We see that $A_2(K_n)=n/2^{n-1}$. 

\paragraph{}
Historically torsion was first considered for 3-manifolds by Reidemeister and
was then extended by Franz to higher dimensions. It originally involved 
a representation of a group acting on the manifold. 
Torsion usually is defined for manifolds with trivial cohomology
but equipped with a metric or with a unitary representation of the fundamental group. 
When considered for Riemannian manifolds, it involves also the volume.

\paragraph{}
The pioneering papers are not so easy to read. Reidemeister torsion and analytic torsion 
were identified in \cite{Mueller1978,Cheeger1979}. What we do here is much more elementary.
Analytic torsion for graphs and more generally for any finite differential complex
given by a finite sequence of derivative matrices $d_k$ only involves familiar linear algebra 
and is defined for arbitrary graphs or finite abstract simplicial complexes without 
additional structure. Especially, we never
actually need even to involve the continuum. The pseudo determinant is a product of eigenvalues
but it is also an entry in the characteristic polynomial defined by an integer matrix and so
computable as an integer without detour over eigenvalues. Analytic torsion is a rational number
explicitly computable in polynomial time from the simplicial complex (finding the Whitney complex
can be costly as finding cliques in a graph in general is NP complete). 

\paragraph{}
The pioneering paper \cite{Franz1935} which considers a cover
of a topological complex for which all Betti numbers $b_1,\dots,b_{d-1}$
with respect to some field $K$ are zero but where one still can have 
a non-trivial fundamental group. Examples are lense spaces.  
Franz then looks at basis changes for which the determinant
is in a fixed multiplicative subgroup of the field $K$.
Also \cite{DeRham1967} which is one of the later accounts of De Rham on 
torsion defines it for a group of units acting as automorphisms on a 
cellular complex. 

\paragraph{}
Milnor was one of the first, who picked up torsion, where de Rham
left off. In \cite{Milnor1961}, torsion was used to construct two manifolds
with boundary which are not diffeomorphic, even so the interiors are
diffeomorphic. Following Reidemeister, Franz and mostly 
de Rham one can define the torsion of a CW-complex $K$ equipped with 
a discrete group action $\Pi=\pi_1(K)$ so that $K/\Pi$ has only finitely 
many cells. Given also a multiplicative homomorphism from $\Pi$ to 
a commutative ring $P$ so that all the equivariant homologies 
$H_i(P \oplus_\Pi C_*(K))$ are zero. Then torsion is defined as 
a unit in $P$.

\paragraph{}
\cite{Milnor1966} summarizes the beginnings: 
{\it in 1935, Reidemeister \cite{Reidemeister1935}, Franz \cite{Franz1935} 
and de Rham \cite{DeRham1939} introduced 
the concept of "torsion" for certain finite simplicial complexes $X$.
(...) it is a kind of determinant which describes
the way in which the simplexes of $X$ are fitted together 
with respect to the action of the fundamental group.
(...) In 1950, J. H. C. Whitehead defined the "torsion" of a 
homotopy equivalence between finite complexes. This is a 
direct generalization of the Reidemeister \cite{Reidemeister1935}, 
Franz, and de Rham concept; but is a more delicate invariant. }

\paragraph{}
For compact oriented Riemannian manifolds $M$, the Ray-Singer torsion $T(M)$ of $M$ is
defined using the {\bf Ray-Singer determinant} 
\cite{RaySinger1971} which was introduced in 1971. They first refer to the Reidemeister-Franz
torsion as a function of certain representations of the fundamental group and then
introduce analytic torsion $T(M)$ then state $T(M_1 \times M_2) = T(M_1)^{\chi(M_2)}$
if $M_2$ is simply connected. The definition of $T(M)$ uses zeta regularized
determinants and only considered for odd dimensional manifolds as it is zero in 
the even dimensional case. In their definition of analytic torsion, \cite{RaySinger1971}
take a representation $O$ of the fundamental group $\pi_1$ by orthogonal matrices and
differential forms with values in the associated vector bundle. 
They assume that the Laplacian $\Delta$ has no zero eigenvalue so that the 
zeta function is analytic at $0$ allowing the definition. For modern approaches, see 
\cite{BurghelaFriedlanderKappelerMcDonald, Bunke2015}. 

\paragraph{}
The {\bf zeta function}
$\zeta_k(s) = \sum_{\lambda_k \neq 0} \lambda_k^{-s}$ 
of the $k$'th Laplacian of a manifold can be written as
$$ \zeta_k(s) = \Gamma(s)^{-1} \int_0^{\infty}
 t^{s-1} tr(e^{-t L_k}) \; $$ 
because 
$\int_0^{\infty} t^{s-1} e^{-t \lambda}) 
     = \Gamma(s) \lambda^{-s}$. The zeta function is
analytic except for some poles. Then ${\rm Det}(L_k)$
is defined as $e^{-\zeta_k'(0)}$ and analytic torsion 
as before. As pointed out in \cite{Witten2021}, the
individual determinants ${\rm Det}(L_k)$ are not always
defined as $0$ can be pole, but magically, the various
poles cancel. It goes without saying that computing the
torsion for a given manifold using the definitions is
almost impossible as we can not compute the eigenvalues
explicitly. For a general manifold, one has to be in a situation, where the 
heat kernel asymptotic are known.  

\paragraph{}
We can also look at the zeta function in the discrete case.
The Hodge block zeta function $\zeta_k(s) = \sum_{\lambda_j \neq 0} \lambda_j^{-s}$
of the hodge $L_k$ defines a Hodge zeta function
$\sum_{k} (-1)^k \zeta_k(s)$ which is not interesting as
by McKean-Singer, this is always constant zero. However, we can define
the {\bf super Hodge zeta function} of a graph as
$$ \zeta(s) = \sum_{k} (-1)^k \zeta_{L_k}(k s) \; . $$
which now the property that
$$  A = e^{-\zeta'(0)} \; . $$

\paragraph{}
Much more natural is the {\bf Dirac zeta function}
$$  \zeta(s) = \sum_{k} (-1)^k \zeta_{D_k}(s) \;   $$
which again satisfies 
$$  A = e^{-\zeta'(0)} \; . $$

\begin{figure}[!htpb]
\scalebox{0.7}{\includegraphics{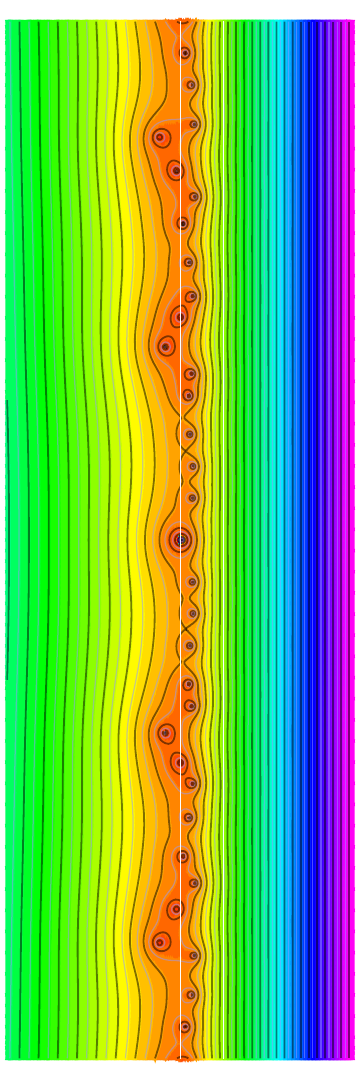}}
\scalebox{0.7}{\includegraphics{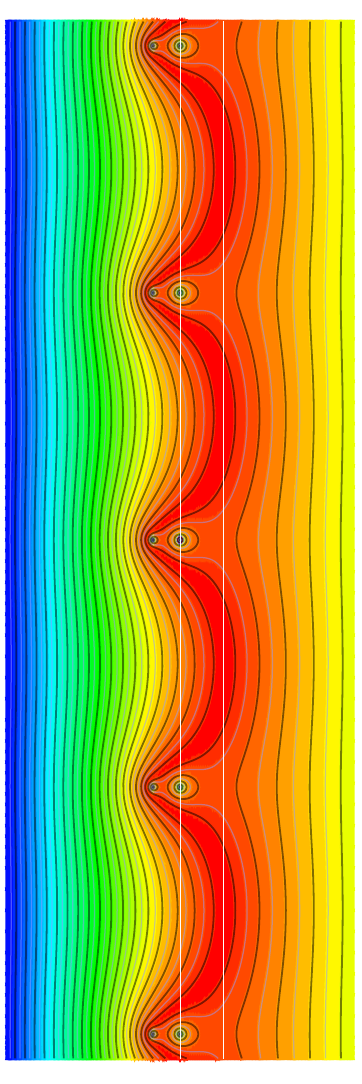}}
\scalebox{0.7}{\includegraphics{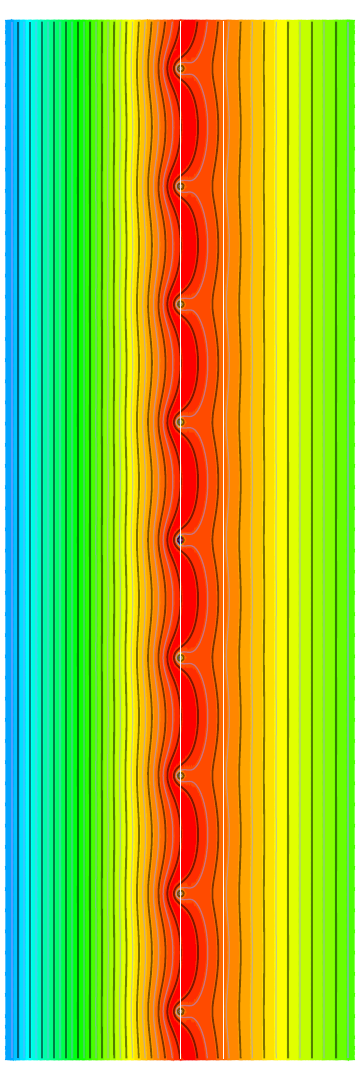}}
\label{Figure 4}
\caption{
Some Hodge zeta functions. First for the circular graph $C_{32}$
then for the triangle $K_3$ and then for the star graph $S_{10}$. 
}
\end{figure}

\paragraph{}
If $G=(V,E)$ is a finite simple graph, let $X_k$ denote the set of 
complete sub-graphs $K_{k+1}$ in $G$. 
The union $X=\bigcup_{k=0}^r X_k$ is a finite set of sets which is closed
under the operation of taking non-empty subsets. It is a {\bf finite abstract
simplicial complex}. If $f_k$ is the cardinality of $X_k$ then $f=(f_0, \dots, f_r)$
is the $f$-vector of $G$. The integer $r$ is the {\bf maximal dimension} and 
$r+1$ is the {\bf clique number}. The integer $n=\sum_{k=0}^r f_k$ is the
number of simplices in $X$. We fix a basis by assuming each $x \in X$ to be ordered.
The Dirac operator $D=d+d^*$ is a $n \times n$ matrix. It depends on the given order
but changing the order imposed on a simplex just produces 
an orthogonal change of basis.

\paragraph{}
Even so we work with real matrices we write $A^*$ for the transpose of a 
matrix $A$. The Dirac operator $D=(d+d^*)$ always is singular because the Hodge
Laplacian $L=D^2$ is. If $d_k$ denotes the exterior derivative from $k$-forms
to $(k+1)$-forms, then $d_k$ is a $f_{k+1} \times f_k$ matrix. 
The {\bf Dirac block} $D_k = d_k^* d_k$ is a $f_k \times f_k$ matrix and
is essentially isospectral to 
$D_k' = d_k d_k^*$ which is a $f_{k+1} \times f_{k+1}$ matrix.
We can extend $d_k$ and $d_{k-1}^*$ to matrices so that they are $n \times f_k$ matrices.

\paragraph{}
This produces the {\bf Dirac columns} $F_k = d_k + d_{k-1}^*$, which is a 
$n \times f_k$ matrix. Now, $F_k^* F_k$ is a $f_k \times f_k$ matrix
$(d_k^* + d_{k-1})(d_k +d_{k-1}^*) = d_k^* d_k + d_{k-1} d_{k-1}^*=L_k$. 
The matrix $F_k F_k^*$ is an $n \times n$ matrix which is 
essentially isospectral to $L_k$. It is a block diagonal matrix 
which is zero everywhere except for the blocks 
$D_k$ and $D_{k+1}'$. We see 
${\rm Det}(L_k) = {\rm Det}(D_k) {\rm Det}(D_{k-1}')$. 

\paragraph{}
Define {\bf pseudo super determinant} of $D$ as
$$ A(G) = {\rm SDet}(D) = \prod_k {\rm Det}(D_k)^{(-1)^k}  \; . $$
Compare with the {\bf super determinant}
$$ {\rm Det}(D) = \prod_k {\rm Det}(D_k) \; . $$

\paragraph{}
We learned about the following key connection first in 
\cite{Bunke2015} and then \cite{GelfandKapranovZwlevinsky}.

\begin{lemma}[Key lemma]
$A(G)= {\rm SDet}(D)$
\end{lemma}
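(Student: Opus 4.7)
The plan is to reduce everything to the factorization $\mathrm{Det}(L_k) = \mathrm{Det}(D_k)\,\mathrm{Det}(D_{k-1})$ and then observe that, once substituted into the definition of $A(G)$, the product telescopes.

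First I would verify the factorization. Writing $L_k = F_k^{*} F_k$ with $F_k = d_k + d_{k-1}^{*}$, the companion matrix $F_k F_k^{*}$ is block-diagonal with diagonal blocks $D_k = d_k^{*} d_k$ and $D_{k-1}' = d_{k-1} d_{k-1}^{*}$, all other entries being zero. Since $F_k^{*} F_k$ and $F_k F_k^{*}$ share the same nonzero spectrum, the pseudo determinant satisfies $\mathrm{Det}(L_k) = \mathrm{Det}(F_k F_k^{*}) = \mathrm{Det}(D_k)\,\mathrm{Det}(D_{k-1}')$. The essential isospectrality of the companions $d_{k-1}^{*} d_{k-1}$ and $d_{k-1} d_{k-1}^{*}$ then gives $\mathrm{Det}(D_{k-1}') = \mathrm{Det}(D_{k-1})$, yielding the desired factorization. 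With the boundary conventions $d_{-1}=0$ and $d_r=0$ (where $r$ is the top dimension), we may set $\mathrm{Det}(D_{-1}) = \mathrm{Det}(D_r) = 1$ so that the formula holds uniformly for $0 \le k \le r$.

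Next, I substitute into the definition and reindex. Writing $e_k = \mathrm{Det}(D_k)$, we get
\begin{equation*}
A(G) = \prod_{k=0}^{r} \bigl(e_k\, e_{k-1}\bigr)^{k(-1)^{k+1}} = \prod_{k=0}^{r} e_k^{k(-1)^{k+1}} \cdot \prod_{k=0}^{r} e_{k-1}^{k(-1)^{k+1}}.
\end{equation*}
A shift $j = k-1$ in the second product, together with the boundary conventions above, gives $\prod_{k=0}^{r} e_k^{(k+1)(-1)^{k}}$. Combining the exponents of each $e_k$, one obtains
\begin{equation*}
k(-1)^{k+1} + (k+1)(-1)^{k} = (-1)^{k}\bigl[-k + (k+1)\bigr] = (-1)^{k},
\end{equation*}
so that $A(G) = \prod_{k=0}^{r} \mathrm{Det}(D_k)^{(-1)^{k}} = \mathrm{SDet}(D)$.

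The only subtlety is the boundary bookkeeping: one must check that the $k=0$ term and the top-dimensional term behave correctly after the shift, which is why we record $\mathrm{Det}(D_{-1}) = \mathrm{Det}(D_r) = 1$ explicitly. Apart from that, the argument is a straightforward linear-algebra identity plus a clean telescoping. I do not foresee any genuine obstacle: the nontrivial content was already absorbed into the identity $\mathrm{Det}(L_k) = \mathrm{Det}(D_k)\,\mathrm{Det}(D_{k-1})$, which in turn rests only on Cauchy--Binet for pseudo determinants and the essential isospectrality of the two companions $d^{*}d$ and $d\,d^{*}$.
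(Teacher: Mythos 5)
Your proposal is correct and follows essentially the same route as the paper: substitute the factorization $\mathrm{Det}(L_k)=\mathrm{Det}(D_k)\,\mathrm{Det}(D_{k-1})$ (obtained from $L_k=F_k^*F_k$, the block structure of $F_kF_k^*$, and the isospectrality of $d^*d$ and $dd^*$) into the definition of $A(G)$ and let the product telescope; your explicit exponent bookkeeping $k(-1)^{k+1}+(k+1)(-1)^k=(-1)^k$ with the conventions $\mathrm{Det}(D_{-1})=\mathrm{Det}(D_r)=1$ merely makes the paper's telescoping argument precise. One minor labelling slip: the nonzero blocks of $F_kF_k^*$ are $d_kd_k^*$ and $d_{k-1}^*d_{k-1}$ rather than $d_k^*d_k$ and $d_{k-1}d_{k-1}^*$, but since each pair shares the same nonzero spectrum (which you invoke), the determinant identity and hence the conclusion are unaffected.
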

\begin{proof}
\begin{eqnarray*}
A(G) &=& \prod_k {\rm Det}(L_k)^{k (-1)^{k+1}}  \\
     &=& \prod_k {\rm Det}(D_k)^{k (-1)^{k+1}} {\rm Det}(D_{k-1})^{k (-1)^{k+1}} \; . 
\end{eqnarray*}
This product telescopes $({\rm Det}(D_1) {\rm Det}(L_3) {\rm Det}(L_5) ...)$
appears in the nominator and 
$({\rm Det}(D_2) {\rm Det}(L_4) {\rm Det}(L_6) ...)$
\end{proof} 

\begin{coro}
b) ${\rm Det}(D)*A(G) = \prod_{k \; {\rm even}} {\rm Det}(D_k)^2$. \\
c) ${\rm Det}(D)/A(G) = \prod_{k \; {\rm odd}}  {\rm Det}(D_k)^2$. \\
\end{coro}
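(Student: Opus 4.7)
The plan is to derive both identities as purely algebraic consequences of the Key Lemma, combined with a formula for $|\mathrm{Det}(D)|$ in terms of the Dirac blocks. The needed ingredient is the identity $|\mathrm{Det}(D)| = \prod_k \mathrm{Det}(D_k)$, which is already asserted in the abstract but deserves a one-line justification here.

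First I would establish $|\mathrm{Det}(D)| = \prod_k \mathrm{Det}(D_k)$. Since $L = D^2$ is block-diagonal with Hodge blocks $L_k$, the pseudo determinant satisfies $\mathrm{Det}(D)^2 = \mathrm{Det}(L) = \prod_k \mathrm{Det}(L_k)$. Using the key identity $\mathrm{Det}(L_k) = \mathrm{Det}(D_k)\,\mathrm{Det}(D_{k-1})$ already invoked in the proof of the Key Lemma, each factor $\mathrm{Det}(D_k)$ appears exactly twice in the product over $k$ (once as the right factor of $\mathrm{Det}(L_k)$ and once as the left factor of $\mathrm{Det}(L_{k+1})$). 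Hence $\mathrm{Det}(D)^2 = \prod_k \mathrm{Det}(D_k)^2$, so $|\mathrm{Det}(D)| = \prod_k \mathrm{Det}(D_k)$.

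Next, the Key Lemma gives $A(G) = \mathrm{SDet}(D) = \prod_k \mathrm{Det}(D_k)^{(-1)^k}$, where even-index blocks appear in the numerator and odd-index blocks in the denominator. Multiplying gives
\begin{equation*}
|\mathrm{Det}(D)| \cdot A(G) = \prod_k \mathrm{Det}(D_k)^{1+(-1)^k} = \prod_{k \text{ even}} \mathrm{Det}(D_k)^2,
\end{equation*}
since the exponent $1+(-1)^k$ equals $2$ for even $k$ and $0$ for odd $k$. Dividing instead gives
\begin{equation*}
|\mathrm{Det}(D)| / A(G) = \prod_k \mathrm{Det}(D_k)^{1-(-1)^k} = \prod_{k \text{ odd}} \mathrm{Det}(D_k)^2,
\end{equation*}
by the symmetric computation on the exponent $1-(-1)^k$.

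There is no substantive obstacle: the statement is formal bookkeeping once the Key Lemma is in hand. The only minor subtlety worth flagging is the absolute value on the left-hand side, which is necessary because the pseudo determinant $\mathrm{Det}(D)$ of the Dirac operator carries an orientation-dependent sign (as noted in the abstract), whereas $A(G)$ and each $\mathrm{Det}(D_k)$ are non-negative. Writing $|\mathrm{Det}(D)|$ absorbs that sign and makes both identities manifest squares, explaining the experimental 2013 observation mentioned above.
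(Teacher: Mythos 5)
Your proposal is correct and follows essentially the same route as the paper: the corollary is just the Key Lemma $A(G)=\prod_k {\rm Det}(D_k)^{(-1)^k}$ multiplied or divided by ${\rm Det}(D)=\pm\prod_k {\rm Det}(D_k)$, with the exponent bookkeeping $1\pm(-1)^k$ exactly as you wrote. Your one-line derivation of $|{\rm Det}(D)|=\prod_k {\rm Det}(D_k)$ from ${\rm Det}(D)^2={\rm Det}(L)=\prod_k{\rm Det}(L_k)$ and ${\rm Det}(L_k)={\rm Det}(D_k){\rm Det}(D_{k-1})$ is a valid justification of a fact the paper only asserts, and your remark on the sign (absolute value) is the right way to read the statement.
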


As references, look at \cite{GelfandKapranovZwlevinsky} (Appendix A) 
or \cite{BurghelaFriedlanderKappelerMcDonald,Bunke2015}. 

\begin{figure}[!htpb]
\scalebox{0.5}{\includegraphics{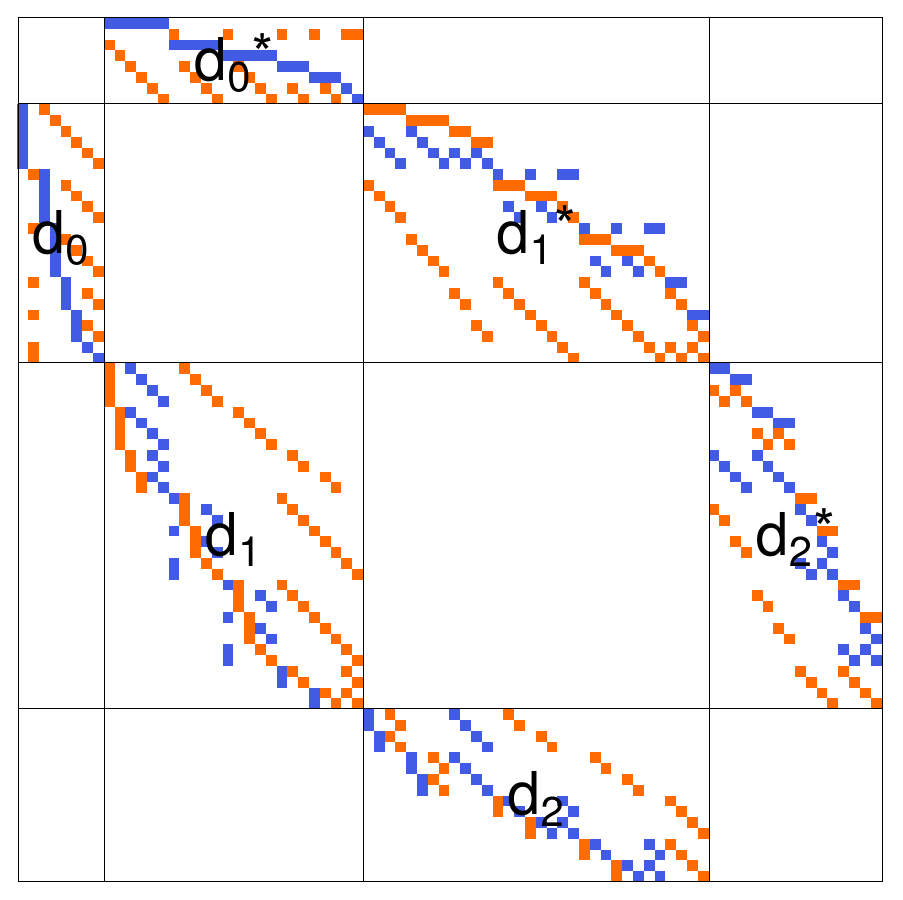}}
\scalebox{0.5}{\includegraphics{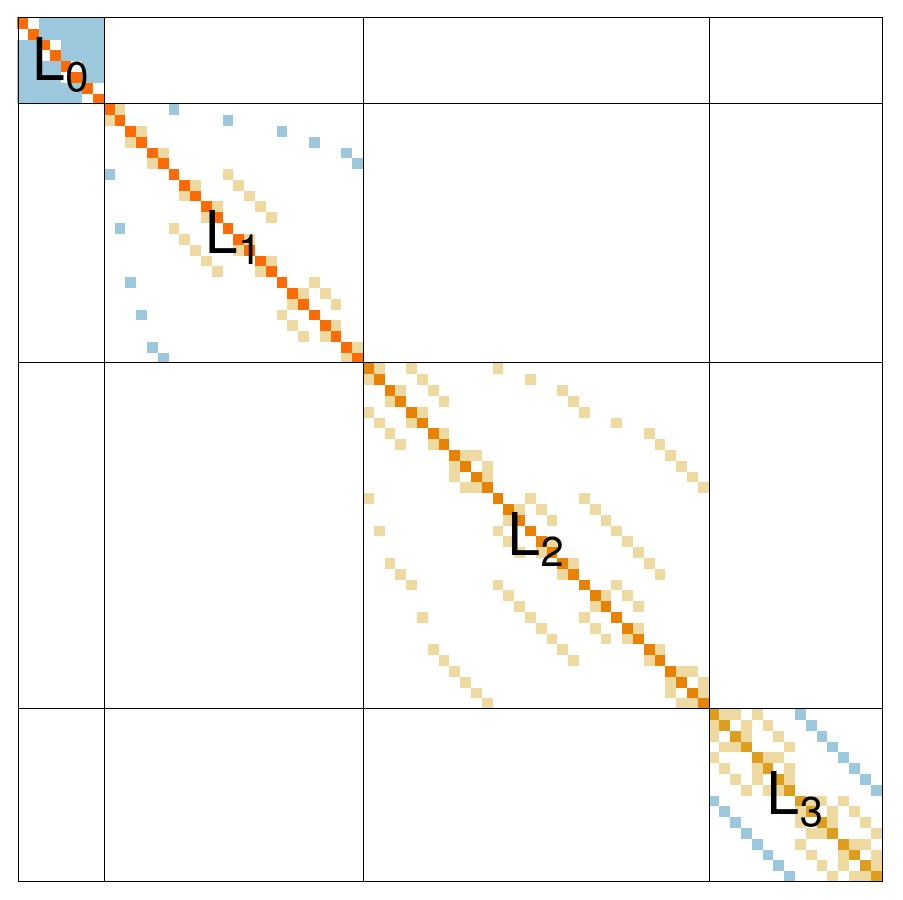}}
\label{Figure 4}
\caption{
The Dirac operator $D$ and the Hodge operator $L$ of a $3$-sphere $G$ with 
$f$-vector $f=(8,24,32,16)$ is a $80 \times 80$ matrix. 
The Hodge determinant vector is ${\rm Det}(L_k), k=0,1,2,3$ is \\
$(663552$, $2337302235907620864$, $2393397489569403764736$, $679477248)$. 
The Dirac determinant vector is ${\rm Det}(D_k), k=0,1,2$ is $(663552$, $3522410053632$, 
$679477248)$.  The analytic torsion is $A(G) = {\rm Det}(D_0) {\rm Det}(D_2)/{\rm Det}(D_1) =128$. 
This is the same than ${\rm Det}(L_1)^1 {\rm Det}(L_3)^3)/({\rm Det}(L_0)^0 {\rm Det}(L_2)^2)$. 
}
\end{figure}

\begin{figure}[!htpb]
\scalebox{1.0}{\includegraphics{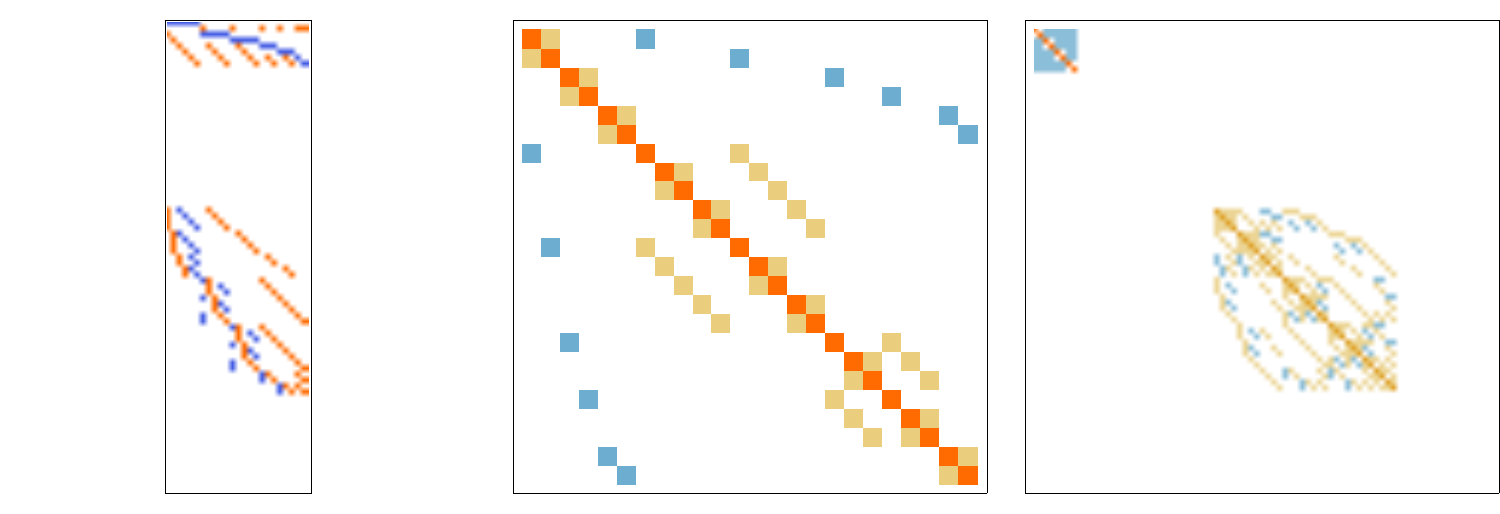}}
\label{Figure 4}
\caption{
The Dirac column $F_1$ belonging to $1$-simplices. The
next graphics shows the Hodge block $L_1 = F_1^* F_1$
which is a $f_1 \times f_1$ matrix. 
The last picture shows the $n \times n$ matrix $F_1 F_1^*$
which is a block diagonal matrix isospectral to $L_1=F_1^* F_1$ and
which contains the two Dirac blocks $D_1=d_1^* d_1$ and $D_0'=d_0 d_0^*$
This immediately shows that the pseudo determinants satisfies
${\rm Det}(L_1) = {\rm Det}(D_1) {\rm Det}(D_0)$. 
}
\end{figure}

\paragraph{}
If $D$ is the Dirac operator of a graph $G=(V,E)$. It is an $n \times n$ matrix. 
Let $A=D^{(1,1)}$ denote the $(n-1) \times (n-1)$-matrix in which the first row
and first column are deleted. The following lemma explains the factor $|V|$ 
appearing in the torsion of contractible graphs or spheres. It is a direct consequence
of the classical {\bf matrix tree theorem}.

\begin{lemma}[Shaving Dirac A] 
Let $A$ be defined from $D$ as above, then \\
a) ${\rm Det}(A) = {\rm Det}(D)/|V|$ \\
b) ${\rm SDet}(A) = {\rm SDet}(D)/|V|$
\end{lemma}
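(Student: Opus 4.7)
The plan is to reduce both statements to the classical Matrix Tree Theorem via the Key Lemma's factorization ${\rm Det}(D) = \pm \prod_k {\rm Det}(D_k)$.

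First I order the simplices so that vertices appear before higher-dimensional simplices, with the first row and first column of $D$ corresponding to a chosen vertex $v$. In this order $D$ has the block anti-diagonal Dirac form with incidence blocks $d_0, d_1, d_2, \dots$, and removing the first row and first column of $D$ has the sole effect of deleting the $v$-column of $d_0$; call the result $\tilde d_0$. Every higher incidence matrix $d_k$ for $k\ge 1$ is untouched, so $A$ is again the Dirac operator of a (modified) chain complex. Applying to $A$ the same block-factorization that underlies the Key Lemma gives
\[
|{\rm Det}(A)| = \prod_k {\rm Det}(\tilde D_k), \qquad \tilde D_0 = \tilde d_0^*\,\tilde d_0,\ \ \tilde D_k = D_k \text{ for } k\ge 1.
\]
The only altered factor is $\tilde D_0$, which is precisely the reduced Kirchhoff Laplacian $L_0^{(v,v)}$.

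By the classical Matrix Tree Theorem, $\det(L_0^{(v,v)}) = {\rm Det}(L_0)/|V| = {\rm Det}(D_0)/|V|$. Taking the ratio of the factorizations for $A$ and $D$, every $k\ge 1$ factor cancels and
\[
\frac{|{\rm Det}(A)|}{|{\rm Det}(D)|} = \frac{{\rm Det}(\tilde D_0)}{{\rm Det}(D_0)} = \frac{1}{|V|},
\]
which is (a). For (b), the same replacement ${\rm Det}(D_0) \mapsto {\rm Det}(D_0)/|V|$ is plugged into ${\rm SDet}(D) = \prod_k {\rm Det}(D_k)^{(-1)^k}$; since the $k=0$ factor carries exponent $+1$, it divides ${\rm SDet}$ by exactly $|V|$. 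The super determinant involves only positive ratios of the ${\rm Det}(D_k)$, so no sign ambiguity arises here.

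The main technical obstacle is verifying that $A$ truly fits the Key Lemma framework, i.e.\ that its Hodge blocks $\tilde L_k$ factor as ${\rm Det}(\tilde L_k) = {\rm Det}(\tilde D_k) {\rm Det}(\tilde D_{k-1}')$ with only $\tilde D_0$ modified --- this follows directly from the block anti-diagonal form of $A$ once vertices are listed first, together with the isospectrality of $\tilde d_0 \tilde d_0^*$ and $\tilde d_0^* \tilde d_0$. A secondary concern is pinning down the sign in ${\rm Det}(A) = \pm {\rm Det}(D)/|V|$: both $D$ and $A$ inherit the supersymmetric $\pm\lambda$ spectral pairing of a Dirac operator, and removing a single vertex drops one zero mode from $\ker L_0$ without disturbing this pairing on the non-zero spectrum, so the signs agree and the absolute values in (a) may be dropped. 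For disconnected graphs the Matrix Tree Theorem should be invoked in its rooted spanning-forest form, but the scaling by $|V|$ persists.
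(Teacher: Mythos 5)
Your argument is essentially the paper's own proof: identify the first Dirac block $D_0=d_0^*d_0$ with the Kirchhoff Laplacian $L_0$, use the matrix tree theorem to get $\det(L_0^{(v,v)})={\rm Det}(L_0)/|V|$, and observe that shaving the first row and column leaves the higher Dirac blocks $D_1,\dots,D_r$ untouched, so the factorizations ${\rm Det}(D)=\pm\prod_k {\rm Det}(D_k)$ and ${\rm SDet}(D)=\prod_k {\rm Det}(D_k)^{(-1)^k}$ yield a) and b) exactly as in the paper. Your explicit verification that the shaved matrix is again the Dirac operator of a chain complex (so the block factorization still applies) merely spells out what the paper asserts; the only caveat is that the matrix-tree step, and hence the lemma, tacitly assumes $G$ connected, so your closing remark about disconnected graphs is not right, since there $L_0^{(v,v)}$ becomes singular while ${\rm Det}(L_0)/|V|$ does not vanish.
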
 
\begin{proof}
Since the first {\bf Dirac block} $D_0 = d_0^* d_0$ agrees with the 
Kirchhoff matrix $L_0= d_0^* d_0$ and ${\rm Det}(L_0) = |V| {\rm Det}(L_0^{(1,1)})$ 
where $L_0^{(1,1)}$ is the $(f_0 -1) \times (f_0-1)$ matrix in which the first row
and column are deleted. For all $k=1, \cdots, |V|=f_0$, the determinant 
${\rm det}(L_0^{(k,k)})$ is the number of spanning trees rooted at the vertex $k$. 
The number ${\rm Det}(L_0)/|V|$ is the number of spanning trees in the graph. \\
To get a), note that ${\rm Det}(D) = \pm {\rm Det}(D_0) {\rm Det}(D_1) \cdots {\rm Det}(D_r)$ and 
that the matrices $D_1, \dots D_r$ are not affected by shaving off the first row and column. 
To get b), we use that ${\rm SDet}(D) = {\rm SDet}(D^{Even})/{\rm SDet}(D^{odd})$. 
\end{proof}

\paragraph{}
The dual story is when shaving away the last row and column of $D$. 
Let $B$ the Dirac operator in which the last row and last column are deleted. 
The next lemma explains the factor $|f_r| = |V'|$ appearing in torsion of spheres. 
Also this can be seen as a consequence of the matrix tree theorem. 

\begin{lemma}[Shaving Dirac B] 
If $G=(V,E)$ is a d-sphere,  and let $B$ be defined as above from $D$, then \\
a) ${\rm Det}(B)  = {\rm Det}(D)/|V'|$ \\
b) ${\rm SDet}(B) = {\rm SDet}(D)|V'|$  if $d$ is even.
c) ${\rm SDet}(B) = {\rm SDet}(D)/|V'|$ if $d$ is odd.
\end{lemma}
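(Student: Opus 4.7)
The plan mirrors the proof of Shaving Dirac A but focuses on the top of the Dirac chain rather than the bottom. Order the simplices by increasing dimension, so that the last row and column of $D = d + d^*$ correspond to a facet $v^\star$. The only nonzero entries of $D$ in that row and column live inside the off-diagonal block $d_{d-1}$ and its transpose, so $B$ retains the anti-block-diagonal structure of $D$ but with $d_{d-1}$ replaced by $\tilde d_{d-1}$, obtained by deleting the row indexed by $v^\star$. The nonzero eigenvalues of such a Dirac-type matrix come in pairs $\pm \sigma$ equal to the singular values of its off-diagonal blocks, so
\[
  {\rm Det}(B) = \pm \Big(\prod_{k=0}^{d-2} {\rm Det}(D_k)\Big) \cdot {\rm Det}(\tilde D_{d-1}), \qquad {\rm Det}(D) = \pm \prod_{k=0}^{d-1} {\rm Det}(D_k),
\]
with $\tilde D_{d-1} = \tilde d_{d-1}^* \tilde d_{d-1}$. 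All three statements reduce to comparing ${\rm Det}(\tilde D_{d-1})$ with ${\rm Det}(D_{d-1})$.

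By isospectrality, ${\rm Det}(\tilde d_{d-1}^* \tilde d_{d-1}) = {\rm Det}(\tilde d_{d-1} \tilde d_{d-1}^*)$, and a direct entrywise check shows $\tilde d_{d-1} \tilde d_{d-1}^* = (D'_{d-1})^{(v^\star, v^\star)}$, the principal submatrix obtained by deleting the row and column of $D'_{d-1}$ indexed by $v^\star$. The sphere hypothesis now enters decisively: every $(d-1)$-face is contained in exactly two $d$-simplices, so each column of $d_{d-1}$ has exactly two nonzero entries; coherent orientability (equivalent to $b_d(G) = 1$) lets signs be chosen so that these two entries are opposite. Computing $(D'_{d-1})(x, y) = \sum_z \mathrm{sign}(z, x)\, \mathrm{sign}(z, y)$ then gives $d + 1$ on the diagonal and $-1$ precisely when $x, y$ share a $(d-1)$-face, so $D'_{d-1} = L_0(G')$, the Kirchhoff Laplacian of the dual graph $G'$.

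The classical matrix tree theorem applied to the connected graph $G'$ gives $\det\bigl(L_0(G')^{(i,i)}\bigr) = \tau(G')$ for every vertex $i$, whence ${\rm Det}(D'_{d-1}) = |V'|\, \tau(G')$ and ${\rm Det}(\tilde D_{d-1}) = \tau(G') = {\rm Det}(D_{d-1})/|V'|$. Substituting into the product formula yields (a): ${\rm Det}(B) = {\rm Det}(D)/|V'|$. For (b) and (c), the key lemma identifies ${\rm SDet}(D) = \prod_k {\rm Det}(D_k)^{(-1)^k}$; only the factor ${\rm Det}(D_{d-1})$ is modified, and its exponent is $(-1)^{d-1}$. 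Scaling it by $1/|V'|$ therefore multiplies ${\rm SDet}$ by $|V'|^{-(-1)^{d-1}} = |V'|^{(-1)^d}$, which is $|V'|$ for even $d$ and $1/|V'|$ for odd $d$.

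The main obstacle is the orientation step: the identification $D'_{d-1} = L_0(G')$ requires a coherent orientation so that every shared face $z$ contributes $\mathrm{sign}(z, x)\, \mathrm{sign}(z, y) = -1$. This is possible exactly because the $d$-sphere is orientable ($b_d = 1$), and if the default simplex labelling is not already coherent one conjugates $D$ by a diagonal $\pm 1$ matrix acting on the $d$-simplex coordinates, an orthogonal change of basis that preserves every block pseudo determinant. The remaining verification that $G'$ is connected follows from the inductive sphere definition. Once these sign and connectedness matters are settled, the matrix tree theorem completes the argument.
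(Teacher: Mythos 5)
Your argument is correct. Note that the paper does not actually prove this lemma: it only states it, remarks beforehand that it ``can be seen as a consequence of the matrix tree theorem'' and afterwards that the top Betti number must be $1$; your proposal is a faithful and complete implementation of exactly that plan. The two substantive points you supply -- that $B$ is again the Dirac operator of the complex with the facet $v^\star$ removed, so only the block ${\rm Det}(D_{d-1})$ changes, and that after choosing a coherent orientation (possible precisely because $b_d=1$, which is where the sphere/orientability hypothesis enters, matching the paper's caveat) one has $D_{d-1}'=L_0(G')$, the Kirchhoff Laplacian of the dual graph -- are the right ones, and the matrix tree theorem then gives ${\rm Det}(\tilde D_{d-1})={\rm Det}(D_{d-1})/|V'|$, from which (a) follows and (b), (c) follow from the exponent $(-1)^{d-1}$ of ${\rm Det}(D_{d-1})$ in ${\rm SDet}$, consistent with the paper's even-over-odd convention for ${\rm SDet}(D)$ and with the claimed $\phi(G)=1$ normalization for spheres. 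One small point worth adding for completeness: since ${\rm rank}(\tilde D_{d-1})={\rm rank}(D_{d-1})=|V'|-1$, the sign $\pm$ in your product formulas is the same for $B$ and $D$, so (a) holds with signs and not just in absolute value.
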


\paragraph{}
This is in general false. We need the last Betti vector to be $1$.
But it holds for torus graphs. 

\paragraph{}
So, in order to prove the result, we replace $D$ with $A$ in the 
contractible case and replace $D$ with $C$, the matrix in which the 
entire boundary has been shaved away. 
Let us introduce a new functional for contractible graphs 
$$ \phi(G) = {\rm SDet}(A) \; . $$
and for  spheres: 
$$ \phi(G) = {\rm SDet}(C) \; . $$
Now everything boils down to 

\begin{lemma}
For spheres or contractible spaces, we have $\phi(G) = 1$. 
\end{lemma}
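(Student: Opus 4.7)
The plan is to unfold $\phi(G)$ combinatorially and reduce to the duality theorem displayed earlier: the number of spanning trees in $\mathcal{B}$ equals the number of spanning trees in $\mathcal{F}$ for contractible graphs and for spheres. By Cauchy-Binet for pseudo determinants combined with the fact that every minor of $d_k$ is $0$ or $\pm 1$, each ${\rm Det}(D_k)$ counts rooted spanning trees in a graph whose vertices are $k$-simplices, and so $A(G)={\rm SDet}(D)$ is the ratio of rooted spanning trees in $\mathcal{B}$ over those in $\mathcal{F}$. The Shaving Dirac A lemma strips the factor $|V|=f_0$ from ${\rm Det}(L_0)=|V|\tau(G)$ (classical matrix-tree theorem), replacing ``rooted in $V$'' by ``unrooted in $V$''; the Shaving Dirac B lemma strips the factor $|V'|=f_d$ from the top block, on the even side if $d$ is even and the odd side if $d$ is odd. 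After these bookkeeping steps, $\phi(G)=1$ becomes exactly the assertion that the super-count of trees in $\mathcal{B}$ equals the super-count in $\mathcal{F}$.

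I would prove this by induction on the number of simplices, using the recursive definitions of ``contractible'' and ``sphere''. In the contractible case, there is a vertex $v$ with $S(v)$ contractible and $G-v$ contractible; passing from $G-v$ to $G$ adjoins the cone simplices $\{v\}\cup\sigma$ for $\sigma$ a simplex of $S(v)$, and these come in matched pairs of consecutive dimensions whose contributions to ${\rm SDet}(D)$ cancel telescopically. The inductive hypothesis $\phi(G-v)=1$ then gives $\phi(G)=1$. The base case $\phi(K_1)=1$ is trivial. In the sphere case, we choose a vertex $v$ with $G-v$ contractible, apply the contractible case to $G-v$, and account for the single removed top-dimensional simplex through the Shaving Dirac B factor. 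For $d=2$ the argument specializes to Von Staudt's maze lemma already proved in the text.

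The main obstacle is constructing the matching between the newly-introduced cone simplices so that their contribution to the super count genuinely cancels, and verifying that this respects the rooting conventions stripped off by the shaving lemmas. The natural device is a discrete-Morse-style acyclic matching generated by the contractibility of $S(v)$: a collapse of $S(v)$ to a point pairs its simplices in complementary parities, and each such pair lifts to a matched pair of cone simplices in $G\setminus (G-v)$ with dimensions differing by $1$. One then has to check that a spanning forest of $\mathcal{B}(G-v)$ extends canonically, via this matching, to one of $\mathcal{B}(G)$, and dually for $\mathcal{F}$, with the rooting factors accounted for exactly as in the statement of the shaving lemmas. This verification is the delicate technical heart of the argument; once it is in place, the multiplicativity of the super determinant under the incremental step combined with the inductive hypothesis closes the proof.
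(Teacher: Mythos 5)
Your plan has the right flavor, but it stops short of the actual content of the lemma. The whole point of $\phi(G)=1$ is the claim that after the shaving, the even and odd tree counts balance exactly; your induction defers precisely this step (``the delicate technical heart'') to an unconstructed matching argument, so nothing is actually proved. Worse, the proposed mechanism would not work as stated: adding the star of $v$ to $G-v$ does not split ${\rm SDet}(D)$ into an old factor times a new factor, because each Dirac block $D_k=d_k^*d_k$ acquires new rows and columns that change \emph{all} of its minors, so ${\rm Det}(D_k)$ is not multiplicative over a partition of simplices into ``old'' and ``cone'' ones. A discrete-Morse pairing of the cone simplices in complementary parities therefore does not by itself yield a telescoping cancellation in the super determinant; one would need something like multiplicativity of torsion for the pair $(G,G-v)$, i.e.\ for a short exact sequence of chain complexes with acyclic quotient, together with a computation of the torsion of that relative complex — none of which is set up. There is also a concrete error in your sphere step: deleting a vertex $v$ from a $d$-sphere removes the entire star of $v$, hence many facets, not ``the single removed top-dimensional simplex'', so accounting for the difference by one Shaving-Dirac-B factor is unjustified. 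Finally, note that ``reducing to the duality theorem'' is close to circular: that duality statement is the paper's reformulation of this very lemma, not an independent prior result.

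For comparison, the paper argues differently (though also only in outline): it asserts that $\phi$, interpreted as a signed count of trees, satisfies a Meyer--Vietoris valuation identity $\phi(X\cup Y)=\phi(X)+\phi(Y)-\phi(X\cap Y)$, and then inducts by building contractible graphs from smaller contractible ones and by writing a $d$-sphere as two $d$-balls glued along a $(d-1)$-sphere, the conclusion being the stated balance between even and odd trees. Your vertex-by-vertex cone induction is a legitimate alternative skeleton, but to make it a proof you must either establish the multiplicative behaviour of the shaved super determinant under a cone extension with contractible $S(v)$ (the torsion of the acyclic relative complex must be shown to be $1$ after the rooting bookkeeping), or switch to the gluing/valuation route and prove that identity; as written, the decisive cancellation is assumed rather than demonstrated.
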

\begin{proof}
For spheres and contractible spaces
we have an interpretation as trees and so a Meyer-Vietoris valuation
formula: $\phi(X \cup Y) = \phi(X) + \phi(Y) - \phi(X \cap Y)$. 
We can show this by induction. We can build up contractible graphs from
smaller contractible graphs. We can also build d-spheres by gluing two 
d-balls $X,Y$ (which are contractible) in such a way that 
$X \cap Y$ is a $(d-1)$-sphere.
The reason for the formula is that $\phi(G)=1$ now tells that there is 
a balance between {\bf even trees} and {\bf odd trees}. 
\end{proof}

\paragraph{}
We plan to follow up on the symmetry between even and odd trees in a
future work. There is more to say about the duality of higher dimensional 
spanning trees in spheres. 

\bibliographystyle{plain}

\end{document}